\newcolumntype{P}[1]{>{\centering\arraybackslash}p{#1}}
\newcommand{\tworows}[2]{\begin{array}{l}
\scriptstyle  #1  \\ [2pt]
\scriptstyle  #2  \end{array}}
\newcommand{\SPAN}{\mathop\mathrm{span}}
\newcommand{\TR}{\mathop\mathrm{tr}}
\newcommand{\ANN}{\mathop\mathrm{Ann_{\mathbb G} (\mathbb H)}}
\newcommand{\AN}{\mathop\mathrm{Ann_{\mathbb G}}}
\newcommand{\RES}[1]{R^{(#1)}_p}
\newtheorem{thm}{Theorem}[section]
\newtheorem{theorem}[thm]{Theorem}
\newtheorem{cor}[thm]{Corollary}
\newtheorem{lemma}[thm]{Lemma}
\newtheorem{prop}[thm]{Proposition}
\newtheorem{defn}[thm]{Definition}
\theoremstyle{remark}
\newtheorem{remark}[thm]{Remark}
\newtheorem{rem}[thm]{Remark}
\newtheorem{ex}[thm]{Example}
\newcommand{\jh}[1]{{{#1}}}
\begin{document}

\title{Toward the Classification of Biangular Harmonic Frames}
\author[ Casazza, Farzannia, Haas, Tran
 ]{Peter G. Casazza, Amineh Farzannia, John I. Haas, and Tin T. Tran}
\address{Department of Mathematics, University
of Missouri, Columbia, MO 65211-4100}

\thanks{The authors were supported by
 NSF DMS 1609760; NSF ATD 1321779, ARO W911NF-16-1-0008}

\email{casazzap@missouri.edu}
\email{afwwc@mail.missouri.edu }
\email{terraformthedreamscape@gmail.com}
\email{tttrz9@mail.missouri.edu}

\subjclass{42C15, 05B10}

\begin{abstract}
Equiangular tight frames (ETFs) and biangular tight frames (BTFs) - sets of unit vectors with basis-like properties whose pairwise absolute inner products admit exactly one or two values, respectively - 
are useful for many applications.  A well-understood class of ETFs are those which manifest as harmonic frames - vector sets defined in terms of the characters of finite abelian groups - because they are characterized by combinatorial objects called difference sets.  

This work is dedicated to the study of the underlying combinatorial structures of harmonic BTFs.  We show that if a harmonic frame is generated by a divisible difference set, a partial difference set or by a special structure with certain Gauss summing properties  - all three of which are generalizations of difference sets that fall under the umbrella term ``bidifference set" -  then it is either a BTF or an ETF.  However, we also show that the relationship between harmonic BTFs and bidifference sets is not as straightforward as the correspondence between harmonic ETFs and difference sets, as there are examples of bidifference sets that do not generate harmonic BTFs.  In addition, we study another class of combinatorial structures, the nested divisible difference sets, 
which yields an example of a harmonic BTF that is not generated by a bidifference set.
\end{abstract}


\maketitle

\section{Introduction}
The Grassmannian line packing problem has received attention from mathematicians for more than half a century \cite{Haantjes1948, LemmensSeidel1973, Koornwinder1976}.  Roughly speaking, 
 the task is to  arrange a fixed number of lines in Euclidean space so that they are as ``spread out as possible," a phrase that is assigned  formal meaning according to a number of different but related definitions \cite{ConwayHardinSloane1996, MR2915706}, typically involving the minimization of some function of the system's {\it angle set} - the set of pairwise absolute inner products between the lines' representative unit vectors. 
Such optimal arrangements of lines are useful for a wide range of disciplines, including areas of science and engineering, such as
 quantum state tomography \cite{Zauner1999, MR2059685, 1523643, RoyScott2007} and wireless communications \cite{MR1984549, MR2028016, MR2021601}, and pure mathematical subjects, like graph theory \cite{Seidel1976, MR2021601, 2016arXiv160203490F, 2014arXiv1408.0334H}.  

Systems of {\it equiangular} lines - line sets with angle sets of cardinality one - 
are perhaps the most well-studied systems \cite{SustikTroppDhillonHeath2007, MR2890902, ScottGrassl2010, Fickus:2015aa, MR2711357, MR2460526, MR3516710, MR1455862} because, according to a lower bound of Welch \cite{Welch1974},  they can form optimal packings.  However, it is well-known that when the number of lines is too large relative to the dimension of the ambient vector space, then they cannot be equiangular \cite{LemmensSeidel1973, Koornwinder1976}; furthermore, there are cases where the number does not exceed this threshold for which equiangular configurations are not possible \cite{Fickus:2015aa, Szollosi2014b}.

In light of this appealing qualitative description, it is natural to hope, in settings where equiangular configurations are not possible, that the cardinalities of the angle sets of optimal line packings might still satisfy some sort of minimality condition.
For this reason,  we consider systems of {\it biangular} lines - sets of lines with angles sets of cardinality two -  which have been studied previously in \cite{DelsarteGoethalsSeidel1975, Hoggar1982, MR3047911, Neumaier1989, MR3325226, MR1455862, bgb15}.  For example, the authors of~\cite{bgb15} prove that several instances of biangular line sets satisfy certain energy minimization properties.  In this work, we focus on biangular line sets generated by {\it harmonic frames}.

A {\it harmonic frame} is a set of unit vectors whose entries are determined according to some subselection from the characters of a finite abelian group, and we say that it is equiangular or biangular, respectively, if the lines that the vectors generate satisfy the corresponding property.  In recent years, harmonic frames have been used to construct various types of optimal line packings \cite{MR1984549, XiaZhouGiannakis2005, MR2446568, GodsilRoy2009, MR3557826}.  Equiangular harmonic frames are well-understood, as it is known that a harmonic frame is equiangular if and only if the underlying subselection of characters corresponds to a  {\it difference set}  \cite{MR1984549, XiaZhouGiannakis2005,  MR2446568}, an object that has received considerable attention within the combinatorial literature throughout most of the last century \cite{MR0001221, MR2246267, MR1440858}.  
In certain settings where equiangular lines are not possible,  
biangular harmonic frames have also been used to construct optimal Grassmannian packings, for example lines generated by maximal sets of mutually unbiased bases \cite{GodsilRoy2009} or the picket fence constructions of \cite{MR3557826}. 

In this work, 
we
study and classify biangular harmonic frames in terms of their underlying combinatorial structures. 
Unlike equiangular harmonic frames, which admit a simple characterization in terms of difference sets, we find that the biangular case is not so straightforward.  Using an approach based on the Fourier transform, we show that a harmonic frame is biangular if its underlying subselection of characters corresponds to either a {\it partial difference set} or a {\it divisible difference set} - both well-studied generalizations of difference sets  \cite{MR1440858, MR1277942}.  Motivated by this, we define {\it bidifference sets} - a more general combinatorial structure that includes  partial difference sets, divisible difference sets and a third class that we show to generate biangular harmonic frames, {\it the Gaussian difference sets}.  Given these results, it seems natural to expect that bidifference sets might be the ``right" notion with which to characterize biangular harmonic frames; however, we provide an example of a bidifference set which does not generate a biangular harmonic frame.  Furthermore, we study a class of combinatorial structures which includes the divisible difference sets but is not contained in the bidifference sets, the {\it nested divisible difference sets}, which admits an example of a biangular harmonic frame that is not generated by a bidifference set.

The remainder of this article is outlined as follows.  In Section~\ref{sec_pre}, we fix notation and recall some basic facts from frame theory and character theory.  In Section~\ref{sec_mod}, we develop the theory of {\it modulation operators}, the Fourier transform-based tool that we use to analyze harmonic frames in the following section.  In Section~\ref{sec_btfs}, we  
study the relationship between biangular harmonic frames and their underlying combinatorial structures, as described in the preceding paragraph.  Finally, in the appendix, we
 tabulate  several examples of infinite families of biangular harmonic frames generated by divisible difference sets and partial difference sets, including detailed information about the corresponding angle sets.

%
%

\section{Preliminaries}\label{sec_pre}
\subsection{Frame Theory}\label{sec:prelim}
Let $\{e_j\}_{j=1}^m$ denote the canonical orthonormal basis for the finite dimensional Hilbert space $\mathbb F^m$, where $\mathbb F = \mathbb R$ or $\mathbb C$, and let $I_m$ denote the $m \times m$ identity matrix. A set of vectors $\mathcal F = \{f_j\}_{j =1}^n \subset \mathbb F^m$ is a {\bf (finite) frame} if $\SPAN \{f_j\}_{j=1}^n = \mathbb F^m.$

A frame $\mathcal F = \{f_j\}_{j=1}^n$  is {\bf $a$-tight} if $\sum_{j=1}^n f_j \otimes f_j^*= a I_m$ for some $a>0$, where  $f_j \otimes f_j^*$ denotes the orthogonal projection onto the $1$-dimensional subspace spanned by $f_j$, and  $\mathcal F$ is {\bf unit-norm} if each frame vector has norm $\|f_j\|=1$.  If $\mathcal F$ is unit-norm and $a$-tight, then $a=\frac{n}{m}$ because
$$n= \sum_{j'=1}^m\sum_{j=1}^n |\langle e_{j'}, f_j \rangle|^2 = \sum_{j'=1}^m\sum_{j=1}^n \TR(f_j \otimes f_j^* e_{j'} \otimes e_{j'}^*) = a \sum_{j'=1}^m \| e_{j'} \|^2 = am,$$
which also implies that every such frame satisfies the identity 
\begin{equation}\label{eq_unt_cond}
\sum\limits_{j'=1}^n |\langle f_j, f_{j'} \rangle|^2 = \frac{n}{m} \text{ for every } j \in \{1,...,n\}.
\end{equation}

Given any unit-norm frame $\mathcal F = \{ f_j \}_{j=1}^n$, its {\bf frame angles} are the elements of the set 
$
\Theta_{\mathcal F} : =\left\{ |\langle f_j, f_{j'} \rangle | : j \neq j' \right\},
$
 and we say that $\mathcal F$ is {\bf $d$-angular} if $|\Theta_{\mathcal F}| =d$ for some $d \in \mathbb N$.  
In the special case that $\mathcal F$ is $1$-angular or $2$-angular, then we say that it is {\bf equiangular} or {\bf biangular}, respectively.  

If $\mathcal F = \{ f_j \}_{j=1}^n$ is $d$-angular with frame angles $\alpha_1, \alpha_2,...,\alpha_d$, then we say that $\mathcal F$ is {\bf equidistributed} if there exist $\tau_1, \tau_2, ..., \tau_d \in \mathbb N$ such that 
$$
\left| \left\{ j' \in \{1,...,n \} : j' \neq j, |\langle f_j, f_{j'} \rangle | = \alpha_l \right\} \right| = \tau_l
$$
for every $j \in \{1,2,...,n\}$ and every $l \in \{1,2,...,d\}$.  In this case, we call the positive integers $\tau_1, \tau_2,...,\tau_d$  the {\bf frame angle multiplicities} of $\mathcal F$ and remark that $\sum_{j=1}^d \tau_j= n-1$.

According to the lower bound of Welch \cite{Welch1974}, if $\mathcal F=\{f_j\}_{j=1}^n$ is a unit-norm frame for $\mathbb F^m$, then the maximal magnitude among all inner products between distinct frame vectors is bounded below by
\begin{equation*}
\max\limits_{j \neq j'} |\langle f_j, f_{j'} \rangle| \geq \sqrt{\frac{n-m}{m(n-1)}},
\end{equation*}
and it is well-known \cite{ Fickus:2015aa} that $\mathcal F$ achieves this bound if and only if $\mathcal F$ is an equiangular, tight frame ({\bf ETF}). 

In general, unit-norm, tight, $d$-angular frames are not equidistributed; for instance, see the constructions in \cite{MR3557826}.  However, the equidistributed property holds for the special case that $d \leq 2$.  If $\mathcal F = \{ f_j\}_{j=1}^n$ is an ETF, then $\mathcal F$ is clearly equidistributed with the single frame angle $\alpha_1=\sqrt{\frac{n-m}{m(n-1)}}$ and corresponding frame angle multiplicity $\tau_1=n-1$.    If $\mathcal F = \{ f_j\}_{j=1}^n$ is a biangular tight frame ({\bf BTF}), then after substituting the frame angles, $\alpha_1$ and $\alpha_2$, into Equation~(\ref{eq_unt_cond}), it is evident that the distribution of these two values as summands in the equation's right-hand side must be the same for any choice of $j\in\{1,...,n\}$;   in other words, for Equation~(\ref{eq_unt_cond})
 to hold for every $j\in\{1,...,n\}$, there must exist positive integers $\tau_1$ and $\tau_2$ such that 
\begin{equation}\label{eq_btf_cond}
\frac{n-m}{m} =\sum\limits_{\tiny \begin{array}{cc} j'=1 \\ j' \neq j \end{array}}^n | \langle f_j, f_{j'} \rangle |^2 = \tau_1 \alpha_1^2 + \tau_2 \alpha_2^2,
\end{equation}
which means that $\mathcal F$ is equidistributed (see Proposition~5.1 of~\cite{2016arXiv160502012C} for an alternative proof of this claim).  \jh{Furthermore, solving Equation~(\ref{eq_btf_cond}) in conjunction with the identity $\tau_1 + \tau_2 = n-1$ yields the values of the frame angle multiplicities of BTFs, which we record in the following proposition.
\begin{prop}\label{prop_btf_multiplicities}
If $\mathcal F = \{f_j\}_{j=1}^n$ is a biangular tight frame for $\mathbb F^m$ with frame angles $\alpha_1$ and $\alpha_2$, then the correponding frame angle multiplicites are
$$\tau_1 = \frac{n-1}{\alpha_2^2 - \alpha_1^2}\left(\alpha_2^2 - \frac{n-m}{m(n-1)} \right)
 \text{ and } 
\tau_2 = \frac{n-1}{\alpha_1^2 - \alpha_2^2}\left(\alpha_1^2 - \frac{n-m}{m(n-1)} \right).
$$
\end{prop}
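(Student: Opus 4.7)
The plan is to treat this as a straightforward linear-algebraic computation, since essentially all the structural work has already been carried out in the discussion leading up to the proposition. The proof of the equidistributed property for BTFs has already established the two independent linear relations that $\tau_1$ and $\tau_2$ must satisfy, namely Equation~(\ref{eq_btf_cond}),
$$\tau_1 \alpha_1^2 + \tau_2 \alpha_2^2 = \frac{n-m}{m},$$
and the counting identity $\tau_1 + \tau_2 = n-1$. What remains is just to solve this $2 \times 2$ linear system for $\tau_1$ and $\tau_2$ and rewrite the solution in the symmetric form stated.

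First I would use the counting identity to substitute $\tau_2 = n-1-\tau_1$ into Equation~(\ref{eq_btf_cond}), obtaining
$$\tau_1(\alpha_1^2 - \alpha_2^2) + (n-1)\alpha_2^2 = \frac{n-m}{m}.$$
Since $\alpha_1 \neq \alpha_2$ by the biangular hypothesis, $\alpha_1^2 - \alpha_2^2$ is nonzero (both angles are nonnegative, and if $\alpha_1^2 = \alpha_2^2$ then $\alpha_1 = \alpha_2$), so I can divide through to solve for $\tau_1$, yielding
$$\tau_1 = \frac{(n-1)\alpha_2^2 - \frac{n-m}{m}}{\alpha_2^2 - \alpha_1^2} = \frac{n-1}{\alpha_2^2 - \alpha_1^2}\left(\alpha_2^2 - \frac{n-m}{m(n-1)}\right),$$
matching the stated expression. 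The formula for $\tau_2$ then follows either by the symmetric argument (swap the roles of $\alpha_1$ and $\alpha_2$) or by subtracting $\tau_1$ from $n-1$ and simplifying.

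There is no substantive obstacle here; the only point requiring any care is justifying that $\alpha_1^2 - \alpha_2^2 \neq 0$ so that the division is legitimate, and this is immediate from the definition of biangularity (which requires two \emph{distinct} frame angles, necessarily both nonnegative). The proposition is essentially a bookkeeping consequence of the equidistribution argument already given just before its statement, and the proof can therefore be written in just a few lines.
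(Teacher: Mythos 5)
Your proposal is correct and follows exactly the route the paper takes: the authors derive the proposition by solving Equation~(\ref{eq_btf_cond}) together with $\tau_1+\tau_2=n-1$, which is precisely the $2\times 2$ linear system you solve, and your algebra checks out. The only addition you make is the explicit remark that $\alpha_1^2-\alpha_2^2\neq 0$, which the paper leaves implicit but which is indeed immediate from biangularity.
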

}

For more information about frame theory and its applications, we refer to \cite{MR2964005}.

\subsection{Group theory and character theory}
Throughout this document, we let $\mathbb G$ denote a finite abelian group of order $n$, and we notate its group operations additively and write its identity element as $0_G$. Furthermore, we appeal to the fundamental theorem of abelian groups and  interpret $\mathbb G$ via some fixed cyclic decomposition
$$
\mathbb G = \mathbb Z_{n_1} \oplus ...  \oplus \mathbb Z_{n_k},
$$
where  $n_1,...,n_k$ are positive integers and where $\mathbb Z_{n_j}$ denotes the additive cyclic group of the integers modulo $n_j$.  Accordingly, whenever $k \geq 2$, we write each element $x \in \mathbb G$ as a $k$-tuple $x = (x(1),..,x(k))$; however, when $k=1$ so that $\mathbb G$ is a cyclic group, we drop the $k$-tuple notation and simply write its elements as $0=0_G, 1, 2,..., n-1$, in which case we write $a \equiv_n b$ to indicate that $a$ and $b$ both belong to the same congruence class of integers modulo $n$.   Furthermore, if $\mathbb G = \mathbb Z_p$ for some prime $p$, then
we write $\mathbb Z_p^*$ to denote the multiplicative group of $p-1$ nonzero elements in $\mathbb Z_p$, and we notate the multiplicative operations by juxtaposition or exponentiation.

  A {\bf character} for $\mathbb G$ is a homomorphism $\rho: \mathbb G \rightarrow \mathbb S^1$, where $\mathbb S^1$ denotes the complex unit circle endowed with standard multiplication.  The set 
$$\hat{ \mathbb G} = \{\rho : \rho \text{ is a character of } \mathbb G \}$$
is called the {\bf dual group of $\mathbb G$}, or occasionally just the {\bf dual group}.  The dual group for $\mathbb G$ also forms a group, where the group operation  is defined by pointwise multiplication and the identity element is the trivial map. 

A standard result in character theory is that every abelian group is isomorphic to its {\bf dual group} \cite{MR1440858}.  In general, there is no natural choice for an isomorphism taking $\mathbb G$ to $\hat{\mathbb G}$; however, with respect to our fixed cyclic decomposition for $\mathbb G$, we define the explicit isomorphism
$$
\Phi : \mathbb G \rightarrow \widehat{\mathbb G} 
\text{ by }
\Phi (x) = \rho_x,
$$
where 
$$
\rho_x (y) = e^{2 \pi i \left( \frac{x(1) y(1)}{n_1}  +\, \cdots \, +  \frac{x(k) y(k)}{n_k}   \right) }
$$
for each $y = (y(1),...,y(k)) \in \mathbb G$.  It is straightforward to check that $\Phi$ is a homomorphism with a trivial kernel, and since $|\mathbb G| = |\hat{\mathbb G}|$, it follows that it is an isomorphism, which means its image is $\widehat{\mathbb G}$. Throughout the rest of this document, we index the elements of $\widehat{\mathbb G}$ with the elements of $\mathbb G$ as determined by the definition of $\Phi$.  

The reason we have fixed the cyclic decomposition of $\mathbb G$ and indexed its characters as described above is so that the labeling of the elements of $\widehat{\mathbb G}$ satisfies the following conjugation and symmetry properties, which we record as a lemma.
\begin{lemma}\label{lem_lab_props}
For every $x, y \in \mathbb G$, we have
$$
\rho_x(y) = \rho_y(x) \text{ and } \overline{\rho_x(y)} = \rho_{-x} (y) = \rho_x( -y).
$$
\end{lemma}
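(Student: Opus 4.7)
The proof is essentially a direct unpacking of the explicit formula for $\rho_x(y)$ together with standard properties of the complex exponential, so the plan is to reduce both claimed identities to elementary symbolic manipulations in the exponent. The only real care needed is in checking that negation inside $\mathbb Z_{n_j}$ interacts correctly with the expression $e^{2\pi i \, x(j) y(j)/n_j}$, which is straightforward because the exponential only depends on the residue class modulo $n_j$.

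First I would write, for $x = (x(1), \dots, x(k))$ and $y = (y(1), \dots, y(k))$,
$$
\rho_x(y) = \prod_{j=1}^{k} e^{2\pi i \, x(j) y(j)/n_j}.
$$
The identity $\rho_x(y) = \rho_y(x)$ then follows immediately from the fact that each factor is symmetric in $x(j)$ and $y(j)$, since ordinary integer multiplication commutes. This disposes of the first claim.

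For the second claim, I would use that complex conjugation negates the argument of the exponential:
$$
\overline{\rho_x(y)} = \prod_{j=1}^{k} e^{-2\pi i \, x(j) y(j)/n_j}.
$$
Then I would note that in $\mathbb Z_{n_j}$ we have $-x(j) \equiv n_j - x(j)$, so
$$
e^{2\pi i \,(-x(j)) y(j)/n_j} = e^{-2\pi i \, x(j) y(j)/n_j},
$$
and the same holds with the roles of $x(j)$ and $y(j)$ swapped. Reassembling the product gives $\overline{\rho_x(y)} = \rho_{-x}(y) = \rho_x(-y)$, completing the verification.

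There is no substantive obstacle here; the only place to be deliberate is in making the passage from $-x(j) \in \mathbb Z_{n_j}$ (interpreted as a group element) to $-x(j) \in \mathbb Z$ (as an exponent) transparent, which is handled by the fact that $e^{2\pi i t/n_j}$ depends on $t$ only modulo $n_j$. Everything else is one line of algebra.
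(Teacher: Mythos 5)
Your proof is correct and takes essentially the same route as the paper, which simply states that the lemma follows directly from the definition of $\Phi$; you have merely written out the elementary exponent manipulations that the paper leaves implicit. Your attention to the well-definedness of the exponent modulo $n_j$ is a nice touch but not a point of divergence.
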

\begin{proof}
This follows directly from the definition of $\Phi$.
\end{proof}

Given a subgroup $\mathbb H$ of $\mathbb G$,
the {\bf annihilator (subgroup) of $\mathbb H$ with respect to $\mathbb G$} is the set
$$\ANN = \{ \rho \in \widehat{\mathbb G} : \rho(x)=1 \text{ for each } x \in \mathbb H \}, $$
which forms a subgroup of $\hat{\mathbb G}$.
The following standard result is essential.
\begin{prop}\label{prop_sumchar_on_H}
If $\mathbb H$ is a subgroup of $\mathbb G$ and $\rho \in \widehat{\mathbb G}$, then
$$
\sum\limits_{x \in \mathbb H} \rho(x) =  \left\{ \begin{array}{cc}
| \mathbb H | , & \rho \in \ANN
\\
0 , & \text{otherwise}
\end{array}
\right. .
$$
\end{prop}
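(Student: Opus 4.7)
The plan is to proceed by cases based on whether $\rho$ belongs to the annihilator subgroup. Set $S := \sum_{x \in \mathbb H} \rho(x)$. In the first case, when $\rho \in \ANN$, the definition of the annihilator immediately gives $\rho(x) = 1$ for every $x \in \mathbb H$, so $S = \sum_{x \in \mathbb H} 1 = |\mathbb H|$. This case requires nothing beyond unwinding definitions.

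The second case, when $\rho \notin \ANN$, uses the classical ``character averaging'' trick. Since $\rho$ fails to be identically $1$ on $\mathbb H$, I can fix some $x_0 \in \mathbb H$ with $\rho(x_0) \neq 1$. Because $\rho$ is a homomorphism,
$$
\rho(x_0)\, S = \sum_{x \in \mathbb H} \rho(x_0)\rho(x) = \sum_{x \in \mathbb H} \rho(x_0 + x).
$$
The map $x \mapsto x_0 + x$ is a bijection from $\mathbb H$ onto itself (since $x_0 \in \mathbb H$ and $\mathbb H$ is a subgroup), so the reindexed sum is again $S$. Therefore $(\rho(x_0) - 1)\, S = 0$, and since $\rho(x_0) \neq 1$ in $\mathbb C$, we may divide to conclude $S = 0$.

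There is no real obstacle here; the only delicate point is verifying that $x \mapsto x_0 + x$ indeed permutes $\mathbb H$, which follows from closure and cancellation in the subgroup. Combining the two cases yields the claimed formula.
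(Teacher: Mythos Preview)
Your proof is correct and follows essentially the same approach as the paper: both handle the annihilator case directly from the definition and, in the remaining case, pick an element $x_0\in\mathbb H$ with $\rho(x_0)\neq 1$ and use the translation-invariance of the sum to deduce $\rho(x_0)S=S$, hence $S=0$.
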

\begin{proof}
If $\rho \in \ANN$, then $\rho(x)=1$ for all $x \in \mathbb H$, so the claim follows for this case;  otherwise, there exists $y \in \mathbb H$ such that $\rho(y) \neq 1$, so we have 
$$
\sum\limits_{x \in \mathbb H} \rho(x) = \sum\limits_{x \in \mathbb H} \rho(y+x) 
=\rho(y) \sum\limits_{x \in \mathbb H} \rho(x), 
$$
which shows that 
$
\sum_{x \in \mathbb H} \rho(x) =0.
$
\end{proof}
If we let $\mathbb H = \mathbb G$ in this proposition, then with respect to our labeling of the characters, we have the following identity.

\begin{cor}\label{cor_gen_sum_roots_unity}
Let $x \in \mathbb G$, then
$$
\sum\limits_{y \in \mathbb G} \rho_x (y) =  \left\{ \begin{array}{cc}
n , & x = 0_G
\\
0 , & \text{otherwise}
\end{array}
\right. .
$$
\end{cor}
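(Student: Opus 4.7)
The plan is to derive this directly from Proposition~\ref{prop_sumchar_on_H} by specializing $\mathbb H = \mathbb G$. Under that choice, the annihilator condition $\rho \in \mathrm{Ann}_{\mathbb G}(\mathbb G)$ collapses to the requirement $\rho(x) = 1$ for every $x \in \mathbb G$; that is, $\rho$ must be the trivial character of $\mathbb G$ (the identity element of $\widehat{\mathbb G}$). So the proposition gives, for any $\rho \in \widehat{\mathbb G}$,
$$
\sum_{y \in \mathbb G} \rho(y) = \begin{cases} n, & \rho \text{ is the trivial character} \\ 0, & \text{otherwise.} \end{cases}
$$

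It then suffices to identify which index $x \in \mathbb G$ corresponds to the trivial character under our fixed labeling $\Phi(x) = \rho_x$. Since $\Phi$ is an isomorphism with trivial kernel, $\rho_x$ equals the identity of $\widehat{\mathbb G}$ if and only if $x = 0_G$. Alternatively, one can read this straight off the explicit formula for $\rho_x(y)$: when $x = 0_G$ the exponent vanishes for every $y$ so $\rho_{0_G} \equiv 1$, whereas for any $x \neq 0_G$ one coordinate $x(j)$ is nonzero modulo $n_j$, and choosing $y$ supported in that coordinate yields a value $\rho_x(y) \neq 1$, confirming $\rho_x$ is nontrivial. Substituting $\rho = \rho_x$ into the displayed dichotomy above and relabeling the summation variable gives exactly the claimed identity, so there is no real obstacle here beyond translating between the proposition's character-based formulation and the group-element indexing supplied by $\Phi$.
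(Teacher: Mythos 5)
Your argument is correct and is precisely the paper's own route: the corollary is obtained by setting $\mathbb H = \mathbb G$ in Proposition~\ref{prop_sumchar_on_H} and observing that $\mathrm{Ann}_{\mathbb G}(\mathbb G)$ consists only of the trivial character, which under the labeling $\Phi$ corresponds exactly to $x = 0_G$. Your extra check via the explicit formula for $\rho_x(y)$ is a harmless (and valid) confirmation of that last identification, which the paper leaves implicit in the phrase ``with respect to our labeling of the characters.''
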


 Since every subgroup of an abelian group is normal, we may form the quotient group $\mathbb G / \mathbb H$,
so let $\pi: \mathbb G \rightarrow \mathbb G / \mathbb H$ be the corresponding quotient map.
It is straightforward to verify that the map
$$
\Psi: \widehat{\mathbb G / \mathbb H} \rightarrow\ANN
\text{
 defined by } \Psi(\rho) = \rho \circ \pi$$
 is a well-defined, surjective homomorphism with a trivial kernel, so $\ANN$ and $\widehat{\mathbb G / \mathbb H}$  are isomorphic.  In particular, since $\widehat{\mathbb G / \mathbb H}$ and ${\mathbb G / \mathbb H}$ are also isomorphic, we obtain the cardinality of $\ANN$.

\begin{prop}\label{prop_size_ann}
If $\mathbb H$ is a subgroup of $\mathbb G$ with order $l$, then
$$
|\ANN| = \left|  \widehat{\mathbb G \backslash \mathbb H}\right| =  |  {\mathbb G \backslash \mathbb H}| = 
\frac{|\mathbb G|}{|\mathbb H|}
=
\frac{n}{l}.
$$
\end{prop}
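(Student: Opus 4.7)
The plan is to verify the three claims about the map $\Psi : \widehat{\mathbb G / \mathbb H} \rightarrow \ANN$, defined by $\Psi(\rho) = \rho \circ \pi$, that were asserted in the paragraph preceding the proposition, namely that $\Psi$ is well-defined, is a group homomorphism, is injective (trivial kernel), and is surjective. Combined with the standard result that a finite abelian group and its dual group have the same cardinality, applied to the quotient group $\mathbb G / \mathbb H$, these facts immediately yield the chain of equalities in the statement.

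First, I would check well-definedness: for any $\rho \in \widehat{\mathbb G / \mathbb H}$ and any $x \in \mathbb H$, we have $\pi(x) = \mathbb H$, the identity in $\mathbb G / \mathbb H$, so $(\rho \circ \pi)(x) = \rho(\mathbb H) = 1$, placing $\rho \circ \pi$ in $\ANN$. Next, the homomorphism property follows at once from pointwise multiplication:
$$\Psi(\rho_1 \rho_2) = (\rho_1 \rho_2) \circ \pi = (\rho_1 \circ \pi)(\rho_2 \circ \pi) = \Psi(\rho_1) \Psi(\rho_2).$$
For the trivial kernel, if $\Psi(\rho)$ is the trivial character on $\mathbb G$, then $\rho(\pi(x)) = 1$ for every $x \in \mathbb G$, and since $\pi$ is surjective onto $\mathbb G / \mathbb H$, this forces $\rho$ to be the trivial character on $\mathbb G / \mathbb H$.

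The only step requiring a small argument is surjectivity. Given any $\sigma \in \ANN$, I would define $\rho : \mathbb G / \mathbb H \rightarrow \mathbb S^1$ by $\rho(x + \mathbb H) := \sigma(x)$ and check that this is well-defined: if $x + \mathbb H = y + \mathbb H$, then $x - y \in \mathbb H$, so $\sigma(x - y) = 1$ because $\sigma \in \ANN$, and hence $\sigma(x) = \sigma(y)$. That $\rho$ so defined is a character on $\mathbb G / \mathbb H$ follows from $\sigma$ being a character on $\mathbb G$, and by construction $\rho \circ \pi = \sigma$, so $\Psi(\rho) = \sigma$.

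With $\Psi$ established as an isomorphism, we obtain $|\ANN| = |\widehat{\mathbb G / \mathbb H}|$. Invoking the standard fact that any finite abelian group is isomorphic to its dual, applied to $\mathbb G / \mathbb H$, gives $|\widehat{\mathbb G / \mathbb H}| = |\mathbb G / \mathbb H|$, and Lagrange's theorem yields $|\mathbb G / \mathbb H| = |\mathbb G|/|\mathbb H| = n/l$. There is no genuine obstacle here; the only subtle bookkeeping is the well-definedness check in the surjectivity step, which uses precisely the defining property of the annihilator.
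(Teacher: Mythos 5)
Your proposal is correct and follows exactly the route the paper sketches in the paragraph preceding the proposition: establishing that $\Psi(\rho) = \rho \circ \pi$ is an isomorphism from $\widehat{\mathbb G / \mathbb H}$ onto $\ANN$, then invoking the isomorphism of a finite abelian group with its dual together with Lagrange's theorem. You simply fill in the verifications the paper labels ``straightforward,'' including the one genuinely nontrivial step (surjectivity via factoring a character through the quotient), and all of them check out.
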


For more information about the character theory of finite abelian groups, we refer to \cite{MR1440858}.

\subsection{$\mathbb G$-Harmonic Frames}
We conclude this section by defining and stating basic facts about harmonic frames, which have also been studied in \cite{MR2446568, MR2784566}, for example.

Given a subset $\mathcal S = \{g_1, ..., g_m\} \subset \mathbb G$, then the set of $n$ vectors
$$
\mathcal F~=~\{ f_x \}_{x \in \mathbb G} \subset \mathbb C^m,
$$
where
$$
f_x = \frac{1}{\sqrt{m}}\left( \rho_{g_j} (x) \right)_{j=1}^m \in \mathbb C^m,\text{  for each } x \in \mathbb G,
$$
is called the {\bf $\mathbb G$-harmonic frame generated by $\mathcal S$}, or sometimes just a {\bf harmonic frame}.  To see that $\mathcal F$ is indeed a frame, observe that the vectors have entries with constant magnitude $\frac 1 {\sqrt m}$, so they are unit norm, and Lemma~\ref{lem_lab_props} shows that the
$(a,b)$-entry of 
the matrix $
\sum\limits_{x \in \mathbb G} f_x \otimes f_x^*
$ 
 is
$$
\left(
\sum\limits_{x \in \mathbb G} f_x \otimes f_x^*
\right)_{a,b} 
=
\frac{1}{m} \sum\limits_{x \in \mathbb G}
 \rho_{g_a} (x) \overline{ \rho_{g_b} (x)}
=
\frac{1}{m} \sum\limits_{x \in \mathbb G}
 \rho_{g_a - g_b} (x) 
=\frac{n}{m} \delta_{a,b},
$$
where $\delta_{a,b}$ denotes the Kronecker delta function.
It follows that $\mathcal F$ spans $\mathbb C^m$ and is, in fact, a unit norm tight frame frame for $\mathbb C^m$ consisting of $n$ vectors.  
Another application of Lemma~\ref{lem_lab_props} shows that the distribution of the frame angles are completely detemined by the inner products that $f_{0_{G}}$ makes with the other frame vectors, because
$$
\langle f_x, f_y \rangle = \frac 1 m \sum\limits_{j=1}^m \rho_{g_j} (x) \overline{\rho_{g_j} (y)}= 
\frac 1 m \sum\limits_{j=1}^m \rho_{g_j} (x-y)  = \langle f_{x-y}, f_{0_G} \rangle
$$
for every $x,y \in \mathbb G$.  In particular, this shows that $\mathcal F$ is equidistributed.

Finally, we remark that, because their definition depends on the theory of characters, we typically interpret harmonic frames as objects which exist in complex Hilbert spaces; however, for certain groups, it is possible to choose $m$ distinct real-valued characters which can therefore be used to construct harmonic frames for $\mathbb R^m$ \cite{MR2784566}.  For example, if $\mathbb G = \mathbb Z_2^{k}$, the direct sum of $k$ copies of $\mathbb Z_2$, then it is straightforward to check that every character is real-valued, so we can interpret every harmonic frame derived from this group as a frame in a real Hilbert space.

\section{$\mathbb G$-Modulation Operators}\label{sec_mod}
In this section, we define and study the {\it $\mathbb G$-modulation operators} of frames, which will serve as essential tools for the analysis of biangular harmonic frames in the next section.  Modulation operators have recently been used to study certain types of optimal line packings, for example, maximal sets of mutually unbiased bases \cite{BandyopadhyayBoykinRoychowdhuryVatan2002} or the picket fence packings in \cite{MR3557826}.

Let  $\mathcal F = \{f_g \}_{g \in \mathbb G}$ be any frame for $\mathbb F^m$ whose vectors are indexed by the elements of $\mathbb G$.  For each $\xi \in \mathbb G$, we define  the {\bf $\xi$-th $\mathbb G$-modulation operator for $\mathcal F$} by
$$
X_\xi = \sum\limits_{x \in \mathbb G} \rho_\xi (x) f_x \otimes f_x^*,
$$
which can be thought of as the $\xi$-th value of the Fourier transform of the operator-valued map 
$$x \mapsto f_x \otimes f_x^*.$$
Direct substitution shows that the Fourier inversion formula holds and is given by
$$
 f_x \otimes f_x^* = \frac{1}{n} \sum\limits_{\xi \in \mathbb G} \rho_x (-\xi) X_\xi, \text{ for each } x \in \mathbb G.
$$ 
Given $x,y \in \mathbb G$, the Fourier inversion formula and the fact that
 $|\langle f_x, f_y \rangle |^2 = \TR(f_x \otimes f_x^* f_y \otimes f_y^*)$ shows that the  absolute values of the inner products between the frame vectors relate to the Hilbert Schmidt inner products between the 
modulation operators by
\begin{equation}\label{eq_encode_angles}
n^2 |\langle f_x, f_y \rangle |^2 = \sum\limits_{\xi, \eta \in \mathbb G} \rho_x(-\xi) \rho_y( \eta) \langle X_\xi, X_\eta \rangle_{H.S.}
.
\end{equation}

Whenever $\mathcal F$ is a $\mathbb G$-harmonic frame, then its  $\mathbb G$-modulation operators are Hilbert Schmidt orthogonal.  In order to see this, we first compute the entries of the modulation operators.
\begin{lemma}\label{prop_mod_entries}
If $\mathcal F = \{f_g\}_{g \in \mathbb G}$ is a $\mathbb G$-harmonic frame  for $\mathbb C^m$ generated by $\{g_1,...,g_m\}$, then the $(a,b)$-entry of its $\xi$-th $\mathbb G$-modulation operator
is
$$
\left(X_\xi \right)_{a,b} =
\left\{
\begin{array}{cc}
n/m, & g_b - g_a = \xi \\
0, & \text{otherwise}
\end{array}
\right. .
%
$$
\end{lemma}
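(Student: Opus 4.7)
The plan is to compute the $(a,b)$-entry of $X_\xi$ directly from its definition by expanding the rank-one operators $f_x \otimes f_x^*$ in coordinates and then collapsing the resulting sum using the orthogonality relation for characters from Corollary~\ref{cor_gen_sum_roots_unity}.

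First, I would recall that for a harmonic frame, $f_x = \frac{1}{\sqrt m} (\rho_{g_j}(x))_{j=1}^m$, so the $(a,b)$-entry of $f_x \otimes f_x^*$ is $\frac{1}{m} \rho_{g_a}(x) \overline{\rho_{g_b}(x)}$. Using the conjugation identity from Lemma~\ref{lem_lab_props}, namely $\overline{\rho_{g_b}(x)} = \rho_{-g_b}(x)$, together with the fact that $\rho_u \rho_v = \rho_{u+v}$ (since $\Phi$ is a homomorphism), this entry simplifies to $\frac{1}{m} \rho_{g_a - g_b}(x)$.

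Substituting into the definition of $X_\xi$ and using that $\rho_\xi(x) \rho_{g_a - g_b}(x) = \rho_{\xi + g_a - g_b}(x)$, the $(a,b)$-entry becomes
$$
\left( X_\xi \right)_{a,b} = \frac{1}{m} \sum_{x \in \mathbb G} \rho_{\xi + g_a - g_b}(x).
$$
Finally, applying Corollary~\ref{cor_gen_sum_roots_unity}, the inner sum equals $n$ precisely when $\xi + g_a - g_b = 0_G$, i.e., $g_b - g_a = \xi$, and vanishes otherwise, yielding the claimed formula.

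There is no real obstacle here; the entire argument is a routine unpacking of the definitions combined with the standard character orthogonality relation already isolated earlier in the paper. The only small care point is to keep track of signs and conjugations when invoking Lemma~\ref{lem_lab_props}, and to note that the three-term character identity $\rho_\xi \rho_{g_a} \overline{\rho_{g_b}} = \rho_{\xi + g_a - g_b}$ is a consequence of $\Phi$ being a group homomorphism, so that Corollary~\ref{cor_gen_sum_roots_unity} applies cleanly to the collapsed sum.
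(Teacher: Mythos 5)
Your proposal is correct and follows essentially the same route as the paper: expand the $(a,b)$-entry of $f_x \otimes f_x^*$, combine the three characters into $\rho_{\xi + g_a - g_b}$ via Lemma~\ref{lem_lab_props} and the homomorphism property of $\Phi$, and collapse the sum with Corollary~\ref{cor_gen_sum_roots_unity}. No issues.
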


\begin{proof}
By definition, we have
\begin{align*}
(X_\xi)_{a,b} =  \sum\limits_{g \in \mathbb G} \rho_\xi (g) (f_g \otimes f_g^*)_{a,b}
=
\frac{1}{m}
\sum\limits_{g \in \mathbb G} \rho_\xi (g) \rho_{g_a} (g) \overline{\rho_{g_b}(g)}.
\end{align*}
By Lemma~\ref{lem_lab_props}, this simplifies to
$$
\frac{1}{m}
\sum\limits_{g \in \mathbb G} \rho_\xi (g) \rho_{g_a} (g) \overline{\rho_{g_b}(g)}
=
\frac{1}{m}
\sum\limits_{g \in \mathbb G} \rho_{\xi + g_a - g_b} (g) ,
$$
so the claim follows by Corollary~\ref{cor_gen_sum_roots_unity}.
\end{proof}

\begin{prop}\label{cor_hs_orth}
If $\mathcal F = \{f_g\}_{g \in \mathbb G}$ is a $\mathbb G$-harmonic frame  for $\mathbb C^m$ generated by $\{g_1,...,g_m\}$, then
$$
\langle X_\xi, X_\eta \rangle_{H.S.} =0 
$$
for every $ \xi, \eta \in \mathbb G$ with $\xi \neq \eta$. 
\end{prop}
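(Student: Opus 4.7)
The plan is to leverage Lemma~\ref{prop_mod_entries} directly: since we already have an explicit, very sparse formula for the entries of each modulation operator, the Hilbert--Schmidt inner product reduces to a sum with essentially no surviving terms when $\xi \neq \eta$.

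First I would recall that the Hilbert--Schmidt inner product on $m \times m$ matrices can be written entrywise as
$$
\langle X_\xi, X_\eta \rangle_{H.S.} = \TR(X_\xi X_\eta^*) = \sum_{a,b=1}^m (X_\xi)_{a,b} \, \overline{(X_\eta)_{a,b}}.
$$
Next I would substitute the formula from Lemma~\ref{prop_mod_entries}: the $(a,b)$-entry of $X_\xi$ is $n/m$ when $g_b - g_a = \xi$ and zero otherwise (and similarly for $X_\eta$). Hence a summand $(X_\xi)_{a,b}\overline{(X_\eta)_{a,b}}$ is nonzero if and only if the pair $(a,b)$ simultaneously satisfies $g_b - g_a = \xi$ and $g_b - g_a = \eta$.

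The key observation is that when $\xi \neq \eta$, no pair $(a,b)$ can satisfy both equalities at once, so every term in the above sum vanishes. This immediately yields $\langle X_\xi, X_\eta \rangle_{H.S.} = 0$, as desired.

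There is no real obstacle here; the content of the proposition is essentially a consequence of the sparse structure established in the preceding lemma, combined with the observation that the ``support'' in $\{1,\dots,m\}^2$ of $X_\xi$, namely $\{(a,b) : g_b - g_a = \xi\}$, is disjoint from that of $X_\eta$ when $\xi \neq \eta$. The only mild bookkeeping step is being careful that the support sets are indeed disjoint, which follows from the uniqueness of the difference $g_b - g_a$ in $\mathbb G$.
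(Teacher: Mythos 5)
Your proposal is correct and follows essentially the same route as the paper: both arguments invoke Lemma~\ref{prop_mod_entries} to identify the support of $X_\xi$ as $\{(a,b) : g_b - g_a = \xi\}$ and conclude that the supports of $X_\xi$ and $X_\eta$ are disjoint when $\xi \neq \eta$, so the entrywise Hilbert--Schmidt inner product vanishes. No gaps.
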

\begin{proof}
By Lemma~\ref{prop_mod_entries}, for each $\xi \in \mathbb G$,  the nonzero entries of $X_\xi$ are indexed by the set $\{ (a,b) : g_b - g_a = \xi \}$.  Thus, if $(X_\xi)_{a,b} \neq 0$ for some $\xi \in \mathbb G$, then $(X_\eta)_{a,b} = 0$ for all $\eta \in \mathbb G$ with $\eta \neq \xi$, which implies that the modulation operators are pairwise Hilbert Schmidt orthogonal, as claimed.  
\end{proof}

In the special case that $\mathcal F$ is a $\mathbb G$-harmonic frame, the orthogonality between its $\mathbb G$-modulation operators  along with the symmetric and conjugate properties from Lemma~\ref{lem_lab_props} show that Equation~(\ref{eq_encode_angles}) simplifies to
\begin{equation}\label{eq_encode_angles_simp}
 n^2 |\langle f_x, f_y \rangle |^2 = \sum\limits_{\xi \in \mathbb G} \rho_{y-x} (\xi) \| X_\xi \|^2_{H.S.}
\end{equation}
for every $x, y \in \mathbb G$.

\section{Harmonic frames and nested difference sets}\label{sec_btfs}
In order to study and classify the  frame angle sets of harmonic frames, we will consider several known classes of combinatorial objects, including difference sets \cite{MR2246267}, divisible difference sets \cite{MR1440858}, relative difference sets \cite{MR1440858}, partial difference sets \cite{MR1277942}, and almost difference sets \cite{2014arXiv1409.0114N}.  These families of objects are closely related, as they all fall under our umbrella term, {\it bidifference sets}.  In fact, as can be verified by examples from \cite{MR1277942,2014arXiv1409.0114N}, these classes of structures are not mutually exclusive.  However, each type of bidifference set just listed is described with its own system of notation; moreover, our work concludes with an examination of a combinatorial structure which is not a bidifference set.  Thus, in order to describe all of these objects with a unified notation, we begin by defining a more general structure, a {\it nested difference set.}

\begin{defn}
An $m$-element subset $\mathcal S= \{g_1,...,g_m \} \subset \mathbb G$ is called an
{\bf $(n,m,t)$-nested difference set} for $\mathbb G$ relative to $(\mathcal A, \Lambda)$ if there exists a sequence of subsets  $\mathcal{A} = \{A_j\}_{j =0}^t$ of $\mathbb G$ and a sequence of nonnegative integers $\Lambda = \{ \lambda_j\}_{j=1}^t$ such that
\begin{enumerate}
\item
$A_0=\{0_G\}$,
\item
$A_t = \mathbb G$,
\item
the sequence $\mathcal A$ is increasing with resepect to set containment,
$$
A_0 = \{0_G\} \subset A_1 \subset ... \subset A_{t-1} \subset A_t = \mathbb G,
$$
\item
and, for every $j\in\{1,2,...,t\}$,  each element $x \in A_j \backslash A_{j-1}$ can be expressed as $x=g_a - g_b$ in exactly $\lambda_j$ ways.
\end{enumerate}
We call the multiset $\left\{g_a - g_b : 1 \leq a,b \leq m, a \neq b \right\}$
the {\bf difference structure of $\mathcal S$}.
\end{defn}

\begin{ex}\label{ex_diff_73}
Let $\mathbb G=\mathbb Z_7$.  The set $\mathcal S= \{0,1 , 3\} \subset \mathbb Z_7$ is a $(7, 3, 1)$-nested difference set for $\mathbb G$ 
relative to $\left( \{A_0,A_1 \}, \{\lambda_1\} \right)$, where $A_0 = \{0\}$ and $A_1 = \mathbb G$ and where $\lambda_1=1$, because $0-1=6, 1-0=1, 0-3=4, 3-0=3, 3-1=2$ and $1-3=5$.
\end{ex}

\begin{ex}
Let $\mathbb G=\mathbb Z_7$.  The set $\mathcal S= \{0,1,2\} \subset \mathbb Z_7$ is a $(7, 3, 3)$-nested difference set for $\mathbb G$ 
relative to $\left( \{A_0,A_1,A_2,A_3 \}, \{\lambda_1, \lambda_2, \lambda_3\} \right)$, where $A_0 = \{0\}, A_1=\{ 0, 2 , 5\}, A_2= \{0, 1, 2, 5, 6  \}$ and $A_3 = \mathbb G$ and where $\lambda_1=1, \lambda_2=2$ and $\lambda_3 =0$, because $0-1=6, 1-0=1, 0-2=5, 2-0=2, 1-2=6$ and $2-1=1$.
\end{ex}

\begin{ex}\label{ex_nest_div_833}
Let $\mathbb G=\mathbb Z_2 \oplus \mathbb Z_4$.  The set $\mathcal S= \left\{(0,0), (1,0), (0,1) \right\} \subset \mathbb G$ is an $(8, 3, 3)$-nested  difference set for $\mathbb G$ relative to $\left(\{ A_j \}_{j=0}^3, \{\lambda_j \}_{j=1}^3 \right)$, where
$A_0 = \{0_G \}$,
$A_1 = \{ (0,0),(1,0)\}$,
$A_2 = \{ (0,0),(1,0), (0,2), (1,2)\}$
and $A_3 =\mathbb G$,
and where
$\lambda_1 =2, \lambda_2 =0$, and $\lambda_3=1$, because 
$(0,0) -(1,0) = (1,0), 
(1,0) -(0,0) = (1,0), 
(0,0) -(0,1) = (0,3), 
(0,1) -(0,0) = (0,1), 
(1,0) -(0,1) = (1,3)$
and
$(0,1) -(1,0) = (1,1).$
\end{ex}

If $\mathcal S$ is a $(n,m,t)$-nested difference set for $\mathbb G$ relative to $(\mathcal A, \Lambda)$, then, by appending copies of the last elements of $\mathcal A$ and $\Lambda$, for example, it is clear that $\mathcal S$ is an $(n,m,t')$-nested difference set for every $t' \geq t$.

\begin{defn}
Let $\mathcal S$ be an $(n,m,t)$-nested difference set for $\mathbb G$ relative to $(\mathcal A, \Lambda)$ for some $t\geq 2$. 
 We say that $\mathcal S$ is a {\bf proper $(n,m,t)$-nested difference set for $\mathbb G$} if there does not exist a pair $(\mathcal A', \Lambda')$ such that  $\mathcal S$ is an $(n,m,t-1)$-nested difference set for $\mathbb G$ relative to $(\mathcal A', \Lambda')$.
\end{defn}
It is not difficult to see that every $m$-element subset $\mathcal S \subset \mathbb G$ is an $(n,m,t)$-nested difference set with respect to some pair $(\mathcal A, \Lambda)$ and some integer $t \geq 1$, so this definition is only interesting when equipped with additional structure.  For the remainder of this paper, we study various types of nested difference sets  and the relationship they have with the harmonic frames that they generate.  We begin with the {\it difference sets}.

\subsection{Difference Sets}
\begin{defn}
Let $\mathcal S \subset \mathbb G$.  We say that $\mathcal S$ is 
an {\bf $(n,m, \lambda)$-difference set for $\mathbb G$} if it is an $(n,m,1)$-nested difference set for $\mathbb G$ relative to $(\{A_0,A_1\}, \{\lambda_1\})$, where  $\lambda=\lambda_1$.
\end{defn}

Difference sets have been studied extensively \cite{MR0001221, MR2246267, MR1440858}, and it is well-known that equiangular harmonic frames are characterized by them \cite{MR1984549, XiaZhouGiannakis2005, MR2446568}.

\begin{thm}\label{thm_char_ETFs}[\cite{MR1984549, XiaZhouGiannakis2005, MR2446568}]
If $\mathcal F = \{f_g\}_{g \in \mathbb G}$ is a $\mathbb G$-harmonic frame  for $\mathbb C^m$ generated by  $\mathcal S = \{g_1,...,g_m\}$, then $\mathcal F$ is equiangular if and only if there exists some positie integer $\lambda$ such that $\mathcal S$ is an $(n, m, \lambda)$ difference set for $\mathbb G$.
\end{thm}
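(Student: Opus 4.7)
The plan is to use the modulation operator machinery from Section~\ref{sec_mod}, in particular Equation~(\ref{eq_encode_angles_simp}) and Lemma~\ref{prop_mod_entries}, to recast both the equiangular condition and the difference set condition as statements about the Hilbert-Schmidt norms of the $\mathbb G$-modulation operators, and then use the Fourier inversion provided by Corollary~\ref{cor_gen_sum_roots_unity} to show these reformulations are equivalent.

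First I would observe that Lemma~\ref{prop_mod_entries} gives an exact formula for $\|X_\xi\|_{H.S.}^2$: since each nonzero entry of $X_\xi$ equals $n/m$, we get
$$\|X_\xi\|_{H.S.}^2 = \left(\frac{n}{m}\right)^2 \left| \{(a,b) : g_b - g_a = \xi\} \right|.$$
For $\xi = 0_G$ the right-hand side equals $n^2/m$ (since the $g_j$ are distinct), and for $\xi \neq 0_G$ it equals $(n/m)^2$ times the multiplicity of $\xi$ in the difference multiset of $\mathcal S$. Hence $\mathcal S$ is an $(n,m,\lambda)$-difference set for $\mathbb G$ if and only if $\|X_\xi\|_{H.S.}^2 = \lambda(n/m)^2$ for every $\xi \neq 0_G$, i.e., if and only if $\xi \mapsto \|X_\xi\|_{H.S.}^2$ is constant on $\mathbb G \setminus \{0_G\}$.

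Next I would invoke Equation~(\ref{eq_encode_angles_simp}), which says
$$n^2 |\langle f_x, f_y \rangle|^2 = \sum_{\xi \in \mathbb G} \rho_{y-x}(\xi) \|X_\xi\|_{H.S.}^2,$$
so (using the reduction $\langle f_x, f_y \rangle = \langle f_{x-y}, f_{0_G}\rangle$ noted in the $\mathbb G$-harmonic frames subsection) $\mathcal F$ is equiangular if and only if $z \mapsto |\langle f_z, f_{0_G}\rangle|^2$ is constant on $\mathbb G \setminus \{0_G\}$. For the ``if'' direction, I would assume $\mathcal S$ is an $(n,m,\lambda)$-difference set, plug $\|X_\xi\|_{H.S.}^2 = \lambda(n/m)^2$ into the formula for $\xi \neq 0_G$, and use Corollary~\ref{cor_gen_sum_roots_unity} (together with $\sum_{\xi \neq 0_G}\rho_{-x}(\xi) = -1$ for $x \neq 0_G$) to obtain the constant value $|\langle f_x, f_{0_G}\rangle|^2 = \tfrac{1}{m} - \tfrac{\lambda}{m^2}$. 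For the converse, I would assume $|\langle f_z, f_{0_G}\rangle|^2 = \alpha^2$ for all $z \neq 0_G$, apply the orthogonality of characters from Corollary~\ref{cor_gen_sum_roots_unity} to invert the transform in Equation~(\ref{eq_encode_angles_simp})--concretely, multiplying both sides by $\rho_{\xi}(z)$ and summing over $z$--and conclude that $\|X_\xi\|_{H.S.}^2$ is constant for $\xi \neq 0_G$. Dividing this constant by $(n/m)^2$ yields the required nonnegative integer $\lambda$, which is positive because the $\mathcal S \neq \mathbb G$ cases produce a nonempty difference multiset.

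I do not foresee a serious obstacle: the substantive content is essentially the Fourier-analytic content already packaged in Lemma~\ref{prop_mod_entries}, Proposition~\ref{cor_hs_orth}, and Equation~(\ref{eq_encode_angles_simp}). The only mildly subtle point is ensuring the Fourier inversion step is clean--namely, verifying that the map $\phi(\xi) \mapsto \sum_\xi \rho_{z}(\xi)\phi(\xi)$ is invertible on functions $\phi: \mathbb G \to \mathbb C$ via Corollary~\ref{cor_gen_sum_roots_unity}--so that the equiangularity of $\mathcal F$ really does force $\|X_\xi\|_{H.S.}^2$ to be constant off $0_G$, rather than merely consistent with that constancy.
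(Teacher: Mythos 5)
Your proof is correct, but note that the paper does not actually prove Theorem~\ref{thm_char_ETFs}: it is stated as a known result and attributed to the cited references, so there is no internal proof to match against. What you have written is a legitimate self-contained derivation using the paper's own Section~\ref{sec_mod} machinery, and it is worth observing that your forward direction is precisely the one-level ($t=1$) specialization of the computation the paper does carry out for bidifference sets in Proposition~\ref{prop_angles_simp2} and Theorem~\ref{thm_dds_btf}: compute $\|X_\xi\|_{H.S.}^2$ from the difference structure via Lemma~\ref{prop_mod_entries}, substitute into Equation~(\ref{eq_encode_angles_simp}), and kill the full character sum with Corollary~\ref{cor_gen_sum_roots_unity} to get the constant value $\frac{1}{m}-\frac{\lambda}{m^2}$ (which correctly reduces to the Welch bound value $\frac{n-m}{m(n-1)}$ since $\lambda=\frac{m(m-1)}{n-1}$). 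The converse direction, via Fourier inversion of $z\mapsto n^2|\langle f_z,f_{0_G}\rangle|^2$, is the genuinely new ingredient relative to what the paper writes down, and it is sound: the orthogonality relation $\sum_{z\in\mathbb G}\rho_z(\xi)\overline{\rho_z(\eta)}=n\,\delta_{\xi,\eta}$ follows from Corollary~\ref{cor_gen_sum_roots_unity} combined with Lemma~\ref{lem_lab_props}, so equiangularity does force $\|X_\xi\|_{H.S.}^2$ to be constant off $0_G$, and dividing by $(n/m)^2$ returns the integer multiplicity $\lambda$. The only imprecision is your closing justification that $\lambda>0$: this has nothing to do with whether $\mathcal S\neq\mathbb G$, but rather with $m\geq 2$, since the constant value is $\lambda=\frac{m(m-1)}{n-1}$, which is positive exactly when the difference multiset is nonempty, i.e.\ when $\mathcal S$ has at least two elements. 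This is a trivial repair and does not affect the validity of the argument.
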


\begin{ex}
The nested difference set from Example~\ref{ex_diff_73} is a $(7,3,1)$-difference set for $\mathbb G$, so by the preceding theorem, it generates an equiangular tight $\mathbb G$-harmonic frame for $\mathbb C^3$ consisting of $7$ vectors. See~(1.6) of~\cite{2016arXiv160909836I} for an explicit construction of this frame.
\end{ex}

Because the relationship between difference sets and harmonic frames is already well-understood, we continue our investigation into structured  $(n,m,t)$-nested difference sets by considering the cases where $t=2$.  We call these {\it bidifference sets}.
\subsection{Bidifference Sets}

\begin{defn}
Let $\mathcal S \subset \mathbb G$.  We say that $\mathcal S$ is  an {\bf $(n,m,l, \lambda, \mu)$-bidifference set for $\mathbb G$ relative to $A$} if it is an $(n,m,2)$-nested difference set for $\mathbb G$ relative to $(\{A_0,A_1, A_2\}, \{\lambda_1, \lambda_2\})$, where  $\lambda=\lambda_1$, $\mu = \lambda_2$, $l=|A_1|$ and $A=A_1$.  We say that $\mathcal S$ is a {\bf proper $(n,m,l, \lambda, \mu)$-bidifference set for $\mathbb G$} if it is not an $(n,m,\lambda)$-difference set for $\mathbb G$.
\end{defn}


 We begin our examination
%
with a computation that further simplifies Equation~(\ref{eq_encode_angles_simp}) in the special case that a harmonic frame is generated by a bidifference set.

\begin{prop}\label{prop_angles_simp2}
 If $\mathcal F = \{f_g\}_{g \in \mathbb G}$ is a $\mathbb G$-harmonic frame  for $\mathbb C^m$ generated by an $(n,m,l, \lambda, \mu)$-bidifference set $\mathcal S = \{g_1,...,g_m\}$ for  $\mathbb G$ relative to $A$, then
\begin{equation}\label{eq_angle_simp2}
m^2 |\langle f_x, f_y \rangle |^2
=
(m-\lambda)+ (\lambda - \mu) \sum\limits_{\xi \in  A }\rho_{y-x} (\xi)
\end{equation}
for every $x,y \in \mathbb G$ with $x \neq y$.  
\end{prop}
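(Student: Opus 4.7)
The plan is to plug directly into the simplified inner product identity Equation~(\ref{eq_encode_angles_simp}), having first evaluated each Hilbert--Schmidt norm $\|X_\xi\|_{H.S.}^2$ by exploiting the bidifference-set structure of $\mathcal{S}$. By Lemma~\ref{prop_mod_entries}, the matrix $X_\xi$ has all zero entries except in positions $(a,b)$ with $g_b - g_a = \xi$, where the entry is $n/m$, so
$$
\|X_\xi\|_{H.S.}^2 \;=\; \frac{n^2}{m^2}\,\bigl|\{(a,b) : 1 \le a,b \le m,\; g_b - g_a = \xi\}\bigr|.
$$
First I would partition $\mathbb{G}$ as $\{0_G\} \sqcup (A \setminus \{0_G\}) \sqcup (\mathbb{G} \setminus A)$ (recall $A_0 = \{0_G\}$, $A_1 = A$, $A_2 = \mathbb{G}$) and read off the counts from the definition of a bidifference set: the number of representations $g_b - g_a = \xi$ equals $m$ when $\xi = 0_G$ (the diagonal pairs), equals $\lambda$ when $\xi \in A \setminus \{0_G\}$, and equals $\mu$ when $\xi \in \mathbb{G} \setminus A$. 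Thus
$$
\|X_\xi\|_{H.S.}^2 \;=\; \frac{n^2}{m^2} \cdot
\begin{cases} m, & \xi = 0_G,\\ \lambda, & \xi \in A \setminus \{0_G\},\\ \mu, & \xi \in \mathbb{G} \setminus A.\end{cases}
$$

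Substituting into Equation~(\ref{eq_encode_angles_simp}) and using $\rho_{y-x}(0_G) = 1$ gives
$$
m^2 |\langle f_x, f_y\rangle|^2 \;=\; m \;+\; \lambda \sum_{\xi \in A \setminus \{0_G\}} \rho_{y-x}(\xi) \;+\; \mu \sum_{\xi \in \mathbb{G} \setminus A} \rho_{y-x}(\xi).
$$
Now I would invoke Corollary~\ref{cor_gen_sum_roots_unity}: since $x \neq y$, we have $y - x \neq 0_G$, so $\sum_{\xi \in \mathbb{G}} \rho_{y-x}(\xi) = 0$. This lets me rewrite
$$
\sum_{\xi \in \mathbb{G} \setminus A} \rho_{y-x}(\xi) \;=\; -\sum_{\xi \in A} \rho_{y-x}(\xi), \qquad
\sum_{\xi \in A \setminus \{0_G\}} \rho_{y-x}(\xi) \;=\; \sum_{\xi \in A} \rho_{y-x}(\xi) - 1.
$$

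Plugging these two identities into the previous display collapses everything into a single sum over $A$:
$$
m^2 |\langle f_x, f_y\rangle|^2 \;=\; m + \lambda\Bigl(\sum_{\xi \in A} \rho_{y-x}(\xi) - 1\Bigr) - \mu \sum_{\xi \in A} \rho_{y-x}(\xi) \;=\; (m - \lambda) + (\lambda - \mu) \sum_{\xi \in A} \rho_{y-x}(\xi),
$$
which is the desired identity. There is no real obstacle here; the proof is essentially a bookkeeping exercise combining Lemma~\ref{prop_mod_entries}, Corollary~\ref{cor_gen_sum_roots_unity}, and the combinatorial definition of a bidifference set. The only subtle point to flag is the exclusion of $\xi = 0_G$ from the $\lambda$-count (it belongs to $A_0$, not $A_1 \setminus A_0$), which is exactly what produces the additive constant $m - \lambda$ rather than $m$ in the final expression.
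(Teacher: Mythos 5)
Your proof is correct and follows essentially the same route as the paper's: compute $\|X_\xi\|_{H.S.}^2$ from Lemma~\ref{prop_mod_entries} via the bidifference-set counts, substitute into Equation~(\ref{eq_encode_angles_simp}), and eliminate the sum over $\mathbb G\setminus A$ using Corollary~\ref{cor_gen_sum_roots_unity}. The only difference is cosmetic bookkeeping (you substitute $\sum_{\xi\in\mathbb G\setminus A}\rho_{y-x}(\xi)=-\sum_{\xi\in A}\rho_{y-x}(\xi)$ directly, while the paper isolates a vanishing $\mu\sum_{\xi\in\mathbb G}\rho_{y-x}(\xi)$ term), which is algebraically identical.
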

\begin{proof}
By Lemma~\ref{prop_mod_entries}, the squared Hilbert Schmidt norm of the $\xi$-th modulation operator is
$$
\|X_\xi\|_{H.S.}^2
=
\left\{
\begin{array}{cc}
\frac{n^2}{m}, & \xi=0 \\
\lambda \frac{n^2}{m^2}, & \xi \in  A \backslash \{0\}\\
\mu \frac{n^2}{m^2}, & \xi \in \mathbb G \backslash A\\
\end{array}
\right.,
$$
so if $x, y \in \mathbb G$ with $z=y-x \neq 0$, then Equation~(\ref{eq_encode_angles_simp}) can be rewritten as
\begin{align*} m^2 |\langle f_x, f_y \rangle |^2 &=
m
+
\lambda 
\sum\limits_{\xi \in A \backslash \{0\}}
\rho_{z} (\xi)
+
\mu
\sum\limits_{\xi \in \mathbb G \backslash  A}
\rho_{z} (\xi)\\
&=
(m-\lambda)+ (\lambda - \mu) \sum\limits_{\xi \in A }\rho_{z} (\xi)
+
  \mu  \sum\limits_{\xi \in \mathbb G}\rho_{z} (\xi).
\end{align*}
The third term vanishes
by Corollary~\ref{cor_gen_sum_roots_unity},
so Equation~(\ref{eq_angle_simp2}) follows.
\end{proof}

As a direct consequence of this computation, we obtain a characterization for the cardinality of the frame angle set of any harmonic frame generated by a bidifference set.

\begin{cor}\label{cor_char_kangular}
If $\mathcal F = \{f_g\}_{g \in \mathbb G}$ is a $\mathbb G$-harmonic frame  for $\mathbb C^m$ generated by an $(n,m,l,\lambda,\mu)$-bidifference set $\mathcal S = \{g_1,...,g_m\}$ for  $\mathbb G$ relative to $A$, then $\mathcal F$ is $k$-angular if and only if
$$
\left|
\left\{
 \sum\limits_{\xi \in  A }\rho_{z} (\xi) : z \in \mathbb G \backslash \{0_G\}
\right\}
\right|=k.
$$
\end{cor}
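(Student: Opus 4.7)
The proof should essentially be a direct application of Proposition~\ref{prop_angles_simp2}, combined with the observation that the squared magnitudes $|\langle f_x,f_y\rangle|^2$ depend only on $z=y-x$ (as noted in Section~2.3), so the frame angle set $\Theta_{\mathcal F}$ and the set
$$
T := \left\{(m-\lambda) + (\lambda - \mu)\sum_{\xi \in A}\rho_z(\xi) : z \in \mathbb G \setminus \{0_G\}\right\} \subset \mathbb R
$$
have the same cardinality, since $t \mapsto \sqrt{t}/m$ is injective on $[0,\infty)$.

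The plan is as follows. First, invoke Proposition~\ref{prop_angles_simp2} to write
$$
m^2|\langle f_x,f_y\rangle|^2 = (m-\lambda) + (\lambda - \mu) \sum_{\xi \in A}\rho_{y-x}(\xi)
$$
for every $x \neq y$, and note that the right-hand side depends only on $z = y-x \in \mathbb G \setminus \{0_G\}$. Hence $|\Theta_{\mathcal F}| = |T|$. Next, observe that $T$ is the image of the set $S := \left\{ \sum_{\xi \in A}\rho_z(\xi) : z \in \mathbb G \setminus \{0_G\}\right\}$ under the affine map $\varphi: \mathbb C \to \mathbb C$ defined by $\varphi(s) = (m-\lambda) + (\lambda - \mu)s$. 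When $\lambda \neq \mu$, $\varphi$ is a bijection, so $|T|= |S|$, and the equivalence $|\Theta_{\mathcal F}| = k \Leftrightarrow |S| = k$ follows.

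The only subtle point is the degenerate case $\lambda = \mu$, in which case $\varphi$ is constant and $\mathcal S$ is actually an $(n,m,\lambda)$-difference set in the sense of Theorem~\ref{thm_char_ETFs}. Then $\mathcal F$ is equiangular, so $|\Theta_{\mathcal F}| = 1$ automatically; meanwhile, $|S|$ can be larger, so the stated equivalence is only meaningful when $\mathcal S$ is a \emph{proper} bidifference set (equivalently, $\lambda \neq \mu$). This should be flagged either by strengthening the hypothesis to ``proper'' or by interpreting the statement as restricted to that case; I would write the proof under the assumption $\lambda \neq \mu$, which is the only case where the corollary provides nontrivial information, and remark on the degenerate case.

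The main (and essentially only) obstacle is the injectivity of $\varphi$, which reduces to $\lambda \neq \mu$; everything else is a direct transcription of Proposition~\ref{prop_angles_simp2} together with the harmonic-frame reduction $|\langle f_x,f_y\rangle| = |\langle f_{x-y},f_{0_G}\rangle|$.
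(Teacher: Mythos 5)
Your argument is correct and is exactly the intended one: the paper offers no written proof, stating only that the corollary is ``a direct consequence'' of Proposition~\ref{prop_angles_simp2}, and the implicit reasoning is precisely your composition of the harmonic-frame reduction $|\langle f_x,f_y\rangle| = |\langle f_{y-x},f_{0_G}\rangle|$ with the injectivity of $t\mapsto \sqrt{t}/m$ and of the affine map $s\mapsto (m-\lambda)+(\lambda-\mu)s$. Your caveat about the degenerate case is also a legitimate criticism of the statement as written, not merely a cosmetic one: if $\lambda=\mu$ the relative set $A$ is essentially unconstrained, and one can make the equivalence fail in both directions. For instance, the $(7,3,1)$-difference set $\{0,1,3\}\subset\mathbb Z_7$ is a $(7,3,2,1,1)$-bidifference set relative to $A=\{0,1\}$, for which $\sum_{\xi\in A}\rho_z(\xi)=1+e^{2\pi i z/7}$ takes six distinct values over $z\neq 0$ while the frame is equiangular. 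So the corollary is only valid under the hypothesis $\lambda\neq\mu$ (equivalently, for proper bidifference sets); the paper happens to invoke it only in that regime (see the case splits in Theorem~\ref{thm_dds_btf} and Theorem~\ref{cor_pds_btf}), so no downstream result is affected, but your restriction of the hypothesis is the right call.
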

%

\begin{ex}\label{ex_bds_div}
Let $\mathbb G= \mathbb Z_6$.   The set $\mathcal S = \{0,1,3 \} \subset \mathbb G$ is a $(6,3,2,2,1)$-bidifference set for $\mathbb G$ relative to $A$, where $A= \{ 0,3\}$.  If $z\in \mathbb G$ with $z \neq 0$, then
$$
\sum\limits_{\xi \in A }\rho_{z} (\xi)
=
\sum\limits_{\xi=0,3} e^{2 \pi i \xi z /6 }
=
\left\{
\begin{array}{cc}
0, & z=1,3,5\\
2, & z=2,4\\
\end{array}
\right.,
$$
so Corollary~\ref{cor_char_kangular} shows that the $\mathbb G$-harmonic frame generated by $\mathcal S$ is a biangular tight frame.
\end{ex}

\begin{ex}\label{ex_bds_counter}
Let $\mathbb G= \mathbb Z_9$.   The set $\mathcal S = \{0,1,3,4 \} \subset \mathbb G$ is an $(9,4,4,2,1)$-bidifference set for $\mathbb G$ relative to $A$, where $A= \{ 0,1,3,6,8\}$.  If $z\in \mathbb G$ with $z \neq 0$, then 
$$
\sum\limits_{\xi \in  A }\rho_{z} (\xi)
=
\sum\limits_{\tiny \xi=0,1,3,6,8} e^{2 \pi i \xi z /9}
=
\left\{
\begin{array}{cc}
2 \cos(2 \pi i/9), & z=1,8\\
2 \cos(4 \pi i/9), & z=2,7\\
2, & z=3,6\\
2 \cos(8 \pi i/9), & z=4,5\\
\end{array}
\right.,
$$
so  the $\mathbb G$-harmonic frame generated by $\mathcal S$ is $4$-angular by  Corollary~\ref{cor_char_kangular}.
\end{ex}

As demonstrated in Example~\ref{ex_bds_counter}, there exist bidifference sets which do not generate biangular harmonic frames, so we cannot expect a  relationship between bidifference sets and harmonic BTFs as nice as the correspondence between difference sets and harmonic ETFs from Theorem~\ref{thm_char_ETFs}.  Nevertheless, we will see that there are numerous classes of structured bidifference sets which  always generate either biangular or equiangular harmonic frames.  Example~\ref{ex_bds_div} is an example from one of these families, the {\it divisible difference sets}.

\subsubsection{Divisible difference sets}
\begin{defn}\label{def_dds}
Let $\mathcal S \subset \mathbb G$ and let $\mathbb H$ be a subgroup of $\mathbb G$ of order $l$.  We say that $\mathcal S$ is  an {\bf $(n,m,l, \lambda, \mu)$-divisible difference set for $\mathbb G$ relative to $\mathbb H$} if it is an $(n,m,l, \lambda, \mu)$-bidifference set for $\mathbb G$ relative to $\mathbb H$.  
\end{defn}

\begin{rem}\label{rem_DDS_RDS_notn}
For typographical reasons and because our main interest is in the order of $\mathbb G$, we have deviated from  the standard notation among the experts \cite{MR1440858, MR1232066} for the definitions of divisible difference sets and the special case of relative difference sets (Definition~\ref{def_rds}).  Typically, the first, second, and third parameters are, respectively, the index of underlying subgroup $\mathbb H$ in $\mathbb G$, the order of $\mathbb H$ and the order of the subselection $\mathcal S$, in which case the size $n$ of the group becomes redundant according to Lagrange's theorem.  
\end{rem}

\begin{thm}\label{thm_dds_btf}
If $\mathcal F = \{f_g\}_{g \in \mathbb G}$ is a $\mathbb G$-harmonic frame  for $\mathbb C^m$ generated by an $(n,m,l, \lambda, \mu)$-divisible difference set $\mathcal S = \{g_1,...,g_m\}$ for  $\mathbb G$ relative to $\mathbb H$, then either
\begin{enumerate}
\item
$\lambda = \mu$ and $\mathcal F$ is an equiangular tight frame, or
\item
$\lambda \neq \mu$ and
$\mathcal F$ is a biangular tight frame with frame angles
$$
\alpha_1 
=
\frac{1}{m}
\sqrt
{
{m- \lambda + l  (\lambda - \mu)}
}
\text{ and }
\alpha_2 = 
\frac{1}{m}\sqrt{{m- \lambda}}
$$
and frame angle multiplicities
$$
\tau_1
=
\frac{n}{l}
\text{ and }
\tau_2 = n - \frac{n}{l} -1.
$$
\end{enumerate}
\end{thm}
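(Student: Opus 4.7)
The strategy is to leverage the additional structure that $A = \mathbb H$ is a subgroup, which collapses the character sum appearing in Proposition~\ref{prop_angles_simp2}. Applied to the present setting, that proposition yields, for every $x \neq y$ in $\mathbb G$,
$$m^2 |\langle f_x, f_y \rangle|^2 = (m - \lambda) + (\lambda - \mu) \sum_{\xi \in \mathbb H} \rho_{y-x}(\xi).$$
Because $\mathbb H$ is a subgroup, Proposition~\ref{prop_sumchar_on_H} then forces the inner sum to take only two possible values, $l$ or $0$, depending on whether or not $\rho_{y-x}$ lies in $\ANN$.

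The case $\lambda = \mu$ is now immediate: the right-hand side becomes the constant $m - \lambda$, so $|\langle f_x, f_y \rangle|$ does not depend on $(x,y)$, making $\mathcal F$ equiangular; since every $\mathbb G$-harmonic frame is already tight, $\mathcal F$ is an ETF. Alternatively, one can observe that $\lambda = \mu$ forces $\mathcal S$ to be an $(n,m,\lambda)$-difference set and cite Theorem~\ref{thm_char_ETFs} directly.

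When $\lambda \neq \mu$, the dichotomy yields exactly the two distinct squared values
$$\alpha_1^2 = \frac{(m-\lambda)+l(\lambda-\mu)}{m^2}, \qquad \alpha_2^2 = \frac{m-\lambda}{m^2},$$
so $\mathcal F$ is a biangular tight frame. The frame angle multiplicities can be obtained in two ways: directly, by recognizing that $\alpha_1$ is realized precisely when $\rho_{y-x} \in \ANN$ and invoking Proposition~\ref{prop_size_ann} to conclude $|\ANN| = n/l$ (from which the counts follow after excluding $y=x$, using equidistribution so that the count is independent of $x$); or more mechanically, by substituting these two angles into Proposition~\ref{prop_btf_multiplicities} and simplifying. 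There is no real analytical obstacle here---the main check is the bookkeeping that matches each angle to its correct multiplicity and the verification that both values indeed occur as $z=y-x$ varies over $\mathbb G \setminus \{0_G\}$, which follows because $\ANN$ is a proper nontrivial subgroup whenever $1 < l < n$.
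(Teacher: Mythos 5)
Your proposal is correct and follows essentially the same route as the paper: both reduce to Proposition~\ref{prop_angles_simp2}, collapse the character sum over the subgroup $\mathbb H$ via Proposition~\ref{prop_sumchar_on_H} into the two values $l$ and $0$, and read off the multiplicities from $|\ANN| = n/l$ using Proposition~\ref{prop_size_ann}. Your added remark that both angles genuinely occur because $\ANN$ is a proper nontrivial subgroup of $\widehat{\mathbb G}$ when $1 < l < n$ is a small point the paper leaves implicit, but it does not change the argument.
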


\begin{proof}
If $\lambda=\mu$, then $\mathcal S$ is an $(n,m,\lambda)$-difference set for $\mathbb G$, so it follows that $\mathcal F$ is equiangular by Theorem~\ref{thm_char_ETFs}.  If $\lambda \neq \mu$, then,
by  Proposition~\ref{prop_sumchar_on_H}, we have
$$
 \sum\limits_{\xi \in \mathbb H }\rho_{z} (\xi)
=\left\{ \begin{array}{cc}
l , & \rho_z \in \ANN
\\
0 , & \text{otherwise}
\end{array}
\right.,
$$
so $\mathcal F$ is a biangular tight frame by Corollary~\ref{cor_char_kangular}, 
and the claimed frame angle values are determined by substituting the value of this summation into Equation~(\ref{eq_angle_simp2}) from Proposition~\ref{prop_angles_simp2}.  The claimed values for the frame angle multipicities follow from Proposition~\ref{prop_size_ann}.
\end{proof}

Numerous infinite families of divisible difference sets are known \cite{MR1440858}, thereby producing infinite families of BTFs according to the previous theorem.  In Table~\ref{tbl_dds} of the appendix, we list several known families of divisible difference sets along with information about the frame angle sets of the corresponding harmonic frames. 

 A type of divisible difference set which has received special attention \cite{MR2057609, GodsilRoy2009, 2014arXiv1409.0114N} occurs when the difference structure includes no nonzero elements from the relative subgroup.  Because this class has been particularly useful for the construction of certain optimal line packings \cite{GodsilRoy2009, 2014arXiv1409.0114N}, we state its definition here and separately tabulate examples for this special case in Table~\ref{tbl_rds} of the appendix.

\begin{defn}\label{def_rds}
Let $\mathcal S \subset \mathbb G$ and let $\mathbb H$ be a subgroup of $\mathbb G$ of order $l$.  We say that $\mathcal S$ is  an {\bf $(n,m,l,  \mu)$-relative difference set for $\mathbb G$ relative to $\mathbb H$} if it is an $(n,m,l,0,\mu)$-divisible difference set for $\mathbb G$ relative to $\mathbb H$.  
\end{defn}

We restate Theorem~\ref{thm_dds_btf} for the special case that a harmonic frame is generated by a relative difference set.
\begin{cor}\label{cor_rds_btf}
If $\mathcal F = \{f_g\}_{g \in \mathbb G}$ is a $\mathbb G$-harmonic frame  for $\mathbb C^m$ generated by an $(n,m,l, \mu)$-relative difference set $\mathcal S = \{g_1,...,g_m\}$ for  $\mathbb G$ relative to $\mathbb H$, then either
\begin{enumerate}
\item
$\mathbb H=\{0_G\}$, so that $\mathcal S$ is an $(n,m,\mu)$-difference set for $\mathbb G$ and $\mathcal F$ is an equiangular tight frame (by Theorem~\ref{thm_char_ETFs}), or
\item
$\mathbb H\neq \{0_G\}$  and
$\mathcal F$ is a biangular tight frame with frame angles
$$
\alpha_1 
=
\frac
{\sqrt
{
{m-l\mu}
}} {m}
\text{ and }
\alpha_2 = 
\frac{\sqrt{m}}{m}
$$
and frame angle multiplicities
$$
\tau_1
=
\frac{n}{l}
\text{ and }
\tau_2 = n - \frac{n}{l} -1.
$$
\end{enumerate}
\end{cor}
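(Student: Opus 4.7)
The plan is to derive this corollary as an essentially immediate specialization of Theorem~\ref{thm_dds_btf}, with the only non-mechanical work being a brief case split on whether $\mathbb{H}$ is trivial. The point is that a relative difference set is \emph{defined} to be the $\lambda = 0$ subcase of a divisible difference set, so all of the analytic machinery has already been done in the preceding theorem; what remains is to substitute and to handle the one corner case.

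First I would dispose of the degenerate case $\mathbb{H} = \{0_G\}$, i.e.\ $l = 1$. Then $A_1 = \mathbb{H} = A_0$, so $A_1 \setminus A_0 = \emptyset$ and the condition ``$\lambda = 0$'' imposes no restriction whatsoever. The only substantive requirement is that each nonzero $x \in \mathbb{G} = A_2 \setminus A_1$ be expressible as $g_a - g_b$ in exactly $\mu$ ways, which is precisely the definition of an $(n,m,\mu)$-difference set. Theorem~\ref{thm_char_ETFs} then delivers conclusion (1) directly.

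Next, for $\mathbb{H} \neq \{0_G\}$, I would invoke Theorem~\ref{thm_dds_btf} with parameters $(n, m, l, 0, \mu)$. A small verification is needed to confirm that we land in case (2) of that theorem rather than case (1), i.e.\ that $\lambda \neq \mu$; since $\lambda = 0$, this amounts to $\mu \neq 0$, which holds whenever $m > 1$ (if $\mu = 0$ too, then no nonzero element of $\mathbb{G}$ would arise as a nontrivial difference of elements of $\mathcal{S}$, forcing $\mathcal{S}$ to be a singleton). Once case (2) is secured, plugging $\lambda = 0$ into the formulas of Theorem~\ref{thm_dds_btf} yields
\begin{equation*}
\alpha_1 = \frac{1}{m}\sqrt{m - 0 + l(0 - \mu)} = \frac{\sqrt{m - l\mu}}{m}, \qquad \alpha_2 = \frac{1}{m}\sqrt{m - 0} = \frac{\sqrt{m}}{m},
\end{equation*}
while the multiplicities $\tau_1 = n/l$ and $\tau_2 = n - n/l - 1$ transfer verbatim.

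There is no real obstacle in this argument; the main point is organizational. If anything, the only delicate piece is making sure the degenerate case $\mathbb{H} = \{0_G\}$ is interpreted correctly --- in particular, confirming that the vacuous $\lambda = 0$ condition genuinely reduces a relative difference set to an ordinary difference set, so that Theorem~\ref{thm_char_ETFs} applies cleanly.
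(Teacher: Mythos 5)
Your proposal is correct and follows exactly the route the paper intends: the paper presents this corollary with no separate proof, simply as a restatement of Theorem~\ref{thm_dds_btf} with $\lambda=0$, and your substitution plus the case split on $\mathbb H=\{0_G\}$ (handled via Theorem~\ref{thm_char_ETFs}) is precisely that specialization written out. Your extra check that $\mu\neq 0$ when $m>1$ is a sound, if unstated in the paper, justification for landing in case~(2).
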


Next, we consider a type of bidifference set which does not require  an underlying subgroup over which its difference structure partitions; instead,  the defining property is that  it forms a bidifference set relative to itself adjoined with the zero element.  Such an object is called a {\it partial difference set}.  \jh{Partial difference sets have connections with strongly regular graphs \cite{MR1277942}, two-weight codes \cite{MR1277942}, and other interesting mathematical objects \cite{MR1277942, MR2064754, MR765286}.  They also generate harmonic frames which are either equiangular or biangular.}

\subsubsection{Partial difference sets}\label{sec_pds}

\begin{defn}\label{def_pds}
Let $\mathcal S \subset \mathbb G$.  We say that $\mathcal S$ is  an {\bf $(n,m,\lambda, \mu)$-partial difference set for $\mathbb G$} if 
$\mathcal S$ is an $(n,m,l, \lambda, \mu)$-bidifference set for $\mathbb G$ relative to $\mathcal S \cup \{ 0_G \}$,
where $l = |\mathcal S \cup \{ 0_G \}|$. 
\end{defn}
\jh{
\begin{rem}
The notation used for parametrization of partial difference sets has evolved over the last few decades; see \cite{MR765286} in comparison with \cite{MR1277942}.  The notation we have used in the preceding definition is consistent with what now seems to be the standard format \cite{MR1277942, MR2064754, MR2496253}.
\end{rem} }
In order to see that partial difference sets generate biangular harmonic frames, we collect a few results about partial difference sets from the literature \cite{MR1277942, MR765286}.  Given a subset $\mathcal S \subset \mathbb G$, we denote $-\mathcal S:=\{-g : g\in \mathcal S\}$, which is called the {\it reversal of $\mathcal S$.}
	\begin{prop}\label{prop_pds_inv} [Proposition 1.2 of \cite{MR1277942}; see also \cite{MR765286}]
		If $\mathcal S$ is an $(n, m,\lambda, \mu)$ partial difference set for $\mathbb G$ and $\lambda\neq\mu$ (ie, $\mathcal S$ is proper), then $-\mathcal S=\mathcal S$.
	\end{prop}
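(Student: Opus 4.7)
The plan is to exploit the obvious symmetry of the multiset of differences. For each $x \in \mathbb G$, set
$$
d(x) := \left|\left\{(a,b) : 1 \leq a,b \leq m,\ a \neq b,\ g_a - g_b = x\right\}\right|.
$$
The map $(a,b) \mapsto (b,a)$ is an involution from pairs satisfying $g_a - g_b = x$ to pairs satisfying $g_a - g_b = -x$, so
$$d(-x) = d(x) \quad \text{for every } x \in \mathbb G.$$
Meanwhile, unwinding the bidifference-set clause in the definition of a partial difference set tells us exactly where $d$ takes each value: $d \equiv \lambda$ on $\mathcal S \setminus \{0_G\}$, and $d \equiv \mu$ on $\mathbb G \setminus (\mathcal S \cup \{0_G\})$.

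With these two facts in hand the proposition follows in one step. Since $|\mathcal S| = |-\mathcal S|$, it suffices to establish $-\mathcal S \subseteq \mathcal S$. Let $x \in \mathcal S$. If $x = 0_G$ then $-x = 0_G \in \mathcal S$, so we may assume $x \neq 0_G$; in that case $d(x) = \lambda$, and the symmetry yields $d(-x) = \lambda$ as well. Now $-x \neq 0_G$, so $-x$ lies either in $\mathcal S$ or in $\mathbb G \setminus (\mathcal S \cup \{0_G\})$; the latter would force $d(-x) = \mu$, contradicting $\lambda \neq \mu$. Hence $-x \in \mathcal S$, as desired.

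The argument has essentially no obstacle: it is a short combinatorial deduction from the symmetry of the difference-counting function together with the hypothesis $\lambda \neq \mu$. The only mild subtlety worth flagging is the case split over whether $0_G$ belongs to $\mathcal S$, handled trivially by the identity $-0_G = 0_G$.
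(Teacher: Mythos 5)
Your argument is correct: the symmetry $d(-x)=d(x)$ of the ordered difference-counting function, combined with the fact that a partial difference set is by definition a bidifference set relative to $\mathcal S \cup \{0_G\}$ (so $d\equiv\lambda$ on $\mathcal S\setminus\{0_G\}$ and $d\equiv\mu$ off $\mathcal S\cup\{0_G\}$), does force $-\mathcal S\subseteq\mathcal S$ and hence equality when $\lambda\neq\mu$. The paper itself offers no proof here --- it simply cites Proposition~1.2 of Ma's survey --- and your combinatorial argument is exactly the standard one (the group-ring version in the literature uses that $\mathcal S\mathcal S^{(-1)}$ is invariant under inversion, which is the same counting involution $(a,b)\mapsto(b,a)$ in different notation), so nothing is missing.
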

\jh{
\begin{defn}
Let $\mathcal S$ be  an $(n,m,\lambda, \mu)$-partial difference set for $\mathbb G$.  If $-{\mathcal S} = \mathcal S$, then we say that $\mathcal S$ is {\bf reversible}.  If $\mathcal S$ is a reversible and $0_G \notin \mathbb G$, then we say that $\mathcal S$ is {\bf regular}.
\end{defn}
}
We  recall the following computation regarding the summation of characters over proper partial difference sets.
	\begin{thm}\label{thm_pds_sumval}[\cite{MR1277942, MR2912882}]
		If $\mathcal S$ is an $(n,m, \lambda, \mu)$ partial difference set for $\mathbb G$
such that $-\mathcal S=\mathcal S$, then 
		\begin{equation}\label{eq_pds_sum}
		\sum_{\xi\in \mathcal S}\rho_z(\xi)=
\left\{
\begin{array}{cc}
m, & z= 0_G \\
\dfrac{1}{2}\left({\lambda-\mu\pm\sqrt{(\lambda-\mu)^2+4\gamma}}\right),
&
\text{otherwise}
\end{array}
\right.,
\end{equation}
where $\gamma=m-\lambda$ if $0_G\in \mathcal S$ and $\gamma=m-\mu$ if $0_G\notin \mathcal S$.
	\end{thm}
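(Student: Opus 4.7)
The plan is to pass to the complex group algebra $\mathbb C[\mathbb G]$ and encode the partial difference set axioms as a single quadratic identity in $D := \sum_{g \in \mathcal S} g$. Writing $D^{(-1)} := \sum_{g \in \mathcal S}(-g)$, the coefficient of any $h \in \mathbb G$ in the product $D \cdot D^{(-1)}$ equals the number of pairs $(g_a,g_b) \in \mathcal S \times \mathcal S$ with $g_a - g_b = h$. The reversibility hypothesis $-\mathcal S = \mathcal S$ forces $D^{(-1)} = D$, so this count is read off from $D^2$.

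Next I would tabulate these coefficients using the PDS axioms. The coefficient of $0_G$ is always $m$, coming from the diagonal pairs $g_a = g_b$; for nonzero $h$, the coefficient is $\lambda$ when $h \in \mathcal S \setminus \{0_G\}$ and $\mu$ when $h \in \mathbb G \setminus (\mathcal S \cup \{0_G\})$. Letting $\mathbf 1 := \sum_{g \in \mathbb G} g$ and casing on whether $0_G \in \mathcal S$, collecting like terms yields the identity
\begin{equation*}
D^2 \;=\; \gamma \cdot 0_G \;+\; (\lambda - \mu)\, D \;+\; \mu \, \mathbf 1
\end{equation*}
in $\mathbb C[\mathbb G]$, where $\gamma = m - \lambda$ if $0_G \in \mathcal S$ and $\gamma = m - \mu$ otherwise; the two cases differ only in whether one copy of $0_G$ gets absorbed into the $\lambda$-term or the $\mu$-term.

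To finish, fix a nonzero $z \in \mathbb G$ and extend $\rho_z$ linearly to a $\mathbb C$-algebra homomorphism on $\mathbb C[\mathbb G]$. Setting $S := \rho_z(D) = \sum_{\xi \in \mathcal S} \rho_z(\xi)$, we have $\rho_z(D^2) = S^2$, $\rho_z(0_G) = 1$, and $\rho_z(\mathbf 1) = 0$ by Corollary~\ref{cor_gen_sum_roots_unity}. Applying $\rho_z$ to the displayed identity produces the quadratic $S^2 - (\lambda - \mu)\,S - \gamma = 0$, whose two roots are precisely the values claimed in~(\ref{eq_pds_sum}). The case $z = 0_G$ is immediate since each $\rho_{0_G}(\xi) = 1$, giving $S = m$.

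The only substantive obstacle is bookkeeping the case split on $0_G \in \mathcal S$ to identify $\gamma$ correctly. The essential role of the reversibility hypothesis is that it identifies $D \cdot D^{(-1)}$ with $D^2$; without it, $\rho_z(D)$ would satisfy only $|\rho_z(D)|^2 = \gamma + (\lambda - \mu)\rho_z(D)$, which does not collapse to the clean real quadratic required here.
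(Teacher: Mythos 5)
Your proof is correct. The paper does not actually prove this statement --- it imports it from the cited references --- so there is no in-paper argument to compare against; your group-algebra derivation (establishing $D^2=\gamma\cdot 0_G+(\lambda-\mu)D+\mu\mathbf 1$ from the difference counts, with the case split on $0_G\in\mathcal S$ giving $\gamma=m-\lambda$ versus $\gamma=m-\mu$, then applying the algebra homomorphism $\rho_z$ and using that $\rho_z(\mathbf 1)=0$ for $z\neq 0_G$ to get the quadratic $S^2-(\lambda-\mu)S-\gamma=0$) is exactly the standard argument found in Ma's survey, and your closing remark correctly isolates why reversibility is essential, namely that it turns $DD^{(-1)}$ into $D^2$ so that $S$ is real and satisfies a genuine quadratic rather than merely $|S|^2=\gamma+(\lambda-\mu)S$.
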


Proposition~\ref{prop_pds_inv} together with Theorem~\ref{thm_pds_sumval} show that every  partial difference set generates either an ETF or a BTF. 

\begin{theorem}\label{cor_pds_btf}
If $\mathcal F = \{f_g\}_{g \in  G}$ is a $\mathbb G$-harmonic frame for $\mathbb C^m$  generated by $\mathcal S$, where $\mathcal S$ is an $( n,  m, \lambda, \mu)$-partial difference set for $\mathbb G$, then either
\begin{enumerate}
\item $\lambda=\mu$ and $\mathcal F$ is an equiangular tight frame, or
\item
$\lambda \neq \mu$, $0_G \in \mathcal S$ 
and
		$\mathcal F$ is a biangular tight frame with frame angles
$$
\alpha_1, \alpha_2=
\frac{\sqrt{2}}{2m}
\sqrt
{
2(m-\lambda)
+
(\lambda -\mu)
\left(
\lambda - \mu \pm
\sqrt{(\lambda - \mu)^2 
 + 4 (m-\lambda)}
\right)
},
$$
\item
or 
$\lambda \neq \mu$, $0_G \notin \mathcal S$ 
and
		$\mathcal F$ is a biangular tight frame with frame angles
$$
\alpha_1, \alpha_2=
\frac{\sqrt{2}}{2m}
\sqrt
{
2(m-\mu) +(\lambda - \mu)\left(\lambda - \mu \pm \sqrt{(\lambda-\mu)^2 +4(m-\mu)}\right)
}.
$$
\end{enumerate}
\end{theorem}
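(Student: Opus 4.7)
The plan is to split into cases on whether $\lambda = \mu$ or $\lambda \neq \mu$. When $\lambda = \mu$, the definition of an $(n,m,\lambda,\mu)$-partial difference set collapses: every nonzero element of $\mathbb G$ lies in either $\mathcal S \setminus \{0_G\}$ or in $\mathbb G \setminus (\mathcal S \cup \{0_G\})$, and in both situations it is expressible as $g_a - g_b$ in exactly $\lambda$ ways. Thus $\mathcal S$ is an $(n,m,\lambda)$-difference set and Theorem~\ref{thm_char_ETFs} yields that $\mathcal F$ is an ETF, giving conclusion (1).

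For $\lambda \neq \mu$, the first move is to invoke Proposition~\ref{prop_pds_inv}, which gives $-\mathcal S = \mathcal S$, placing us in the setting where Theorem~\ref{thm_pds_sumval} applies. The centerpiece of the argument is the identity from Proposition~\ref{prop_angles_simp2}, which for a bidifference set relative to $A = \mathcal S \cup \{0_G\}$ reads
\begin{equation*}
m^2 |\langle f_x, f_y\rangle|^2 = (m-\lambda) + (\lambda-\mu) \sum_{\xi \in A}\rho_{y-x}(\xi)
\end{equation*}
for all $x \neq y$ in $\mathbb G$. To evaluate the character sum, I would split on whether $0_G$ lies in $\mathcal S$. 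If $0_G \in \mathcal S$, then $A = \mathcal S$ and the sum is exactly the one computed in Theorem~\ref{thm_pds_sumval} with $\gamma = m-\lambda$; if $0_G \notin \mathcal S$, then $A = \mathcal S \cup \{0_G\}$ and the sum equals $1$ plus the expression from Theorem~\ref{thm_pds_sumval} with $\gamma = m-\mu$.

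In either sub-case the character sum takes exactly two values (corresponding to the $\pm$), so Corollary~\ref{cor_char_kangular} immediately gives that $\mathcal F$ is biangular. The remaining work is routine algebraic simplification: substituting each of the two sum values back into the displayed formula and grouping terms so that the result appears as $\tfrac{1}{2m^2}$ times the claimed radicand. When $0_G \notin \mathcal S$, I would use the identity $(m-\lambda) + (\lambda-\mu) = m - \mu$ to fold the extra ``$+1$'' contribution into the constant part of the radicand, producing $2(m-\mu)$ in place of $2(m-\lambda)$ and matching the displayed formula in (3).

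The main obstacle I anticipate is purely notational bookkeeping -- tracking the two distinct values of $\gamma$ in Theorem~\ref{thm_pds_sumval} and carefully accounting for the extra $0_G$ contribution in sub-case (3) -- rather than any deep mathematical difficulty, since the heavy lifting (the reversibility of proper PDSs and the evaluation of their character sums) has already been encapsulated in Proposition~\ref{prop_pds_inv} and Theorem~\ref{thm_pds_sumval}.
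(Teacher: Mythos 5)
Your proposal is correct and follows essentially the same route as the paper: split on $\lambda=\mu$ versus $\lambda\neq\mu$, invoke Proposition~\ref{prop_pds_inv} to get reversibility, feed the character-sum values from Theorem~\ref{thm_pds_sumval} (with the extra ``$+1$'' adjustment when $0_G\notin\mathcal S$) into Equation~(\ref{eq_angle_simp2}) of Proposition~\ref{prop_angles_simp2}, and conclude biangularity via Corollary~\ref{cor_char_kangular}. The algebraic bookkeeping you describe, including folding $(m-\lambda)+(\lambda-\mu)$ into $m-\mu$ in the third case, is exactly what the paper's substitution amounts to.
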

\begin{proof}
If $\lambda=\mu$, then $\mathcal S$ is an $(n,m,\lambda)$-difference set for $\mathbb G$, so $\mathcal F$ is equiangular by Theorem~\ref{thm_char_ETFs}.  If $\lambda \neq \mu$, then $\mathcal S= -\mathcal S$ by Proposition~\ref{prop_pds_inv}, so Equation~(\ref{eq_pds_sum}) from Theorem~\ref{thm_pds_sumval} holds.  
If $0_G \in \mathcal S$, then $\mathcal F$ is a biangular tight frame by Corollary~\ref{cor_char_kangular}.
Substituting the value of the summation in  Equation~(\ref{eq_pds_sum}) directly into Equation~(\ref{eq_angle_simp2}) from Proposition~\ref{prop_angles_simp2} shows the claimed values of the frame angles.
If $0_G \notin \mathcal S$, then adjusting Equation~(\ref{eq_pds_sum}) from Theorem~\ref{thm_pds_sumval} yields
$$
\sum_{\scriptscriptstyle \xi\in \mathcal S \cup \{0_G\}}\rho_z(\xi)=
\left\{
\begin{array}{cc}
1+m, & z= 0_G \\
1+
\dfrac{1}{2}\left({\lambda-\mu\pm\sqrt{(\lambda-\mu)^2+4(m-\mu)}}\right),
&
\text{otherwise}
\end{array}
\right.,
$$
so again
$\mathcal F$ is biangular by Corollary~\ref{cor_char_kangular} and
substituting this sum
into Equation~(\ref{eq_angle_simp2}) from Proposition~\ref{prop_angles_simp2} shows the claimed values of the frame angles for this case.
\end{proof}
\jh{
\begin{rem}
In Theorem~\ref{thm_dds_btf}, we stated the frame angle multiplicities of the resulting biangular frame because they follow directly from the structure of the divisible difference set from which the frame is generated; however, we do not state these values in the preceding theorem, because, except for certain cases, these values are not as easily extrapolated from the structure of the underlying  proper partial difference set.  Nevertheless, this information becomes apparent after applying Propososition~\ref{prop_btf_multiplicities}.  In this sense, we find that the representation of a partial difference set as a harmonic frame sheds some light on the partial difference set's structure.
\end{rem}
}

\jh
{
The following fact implies that each harmonic BTF produced by a partial difference set is accompanied by a harmonic BTF(or ETF) in a complex vector space of dimension different from its own by one.
\begin{prop}[\cite{MR1277942}]
If $\mathcal S$ is a reversible $(n,m,\lambda, \mu)$ partial difference set for $\mathbb G$ such that $0_G \in \mathbb G$, then $\mathcal S \backslash \{0_G\}$ is a regular $(n, m-1, \lambda-2, \mu)$-partial difference set for $\mathbb G$.  Conversely, if $\mathcal S$ is a regular $(n,m,\lambda, \mu)$ partial difference set for $\mathbb G$, then $\mathcal S \cup \{0_G\}$ is a reversible $(n, m+1, \lambda+2, \mu)$-partial difference set for $\mathbb G$.
\end{prop}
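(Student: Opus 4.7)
The plan is to prove both directions by a direct counting argument, exploiting the observation that the only representations $x = g_a - g_b$ that change when we adjoin or delete $0_G$ from $\mathcal S$ are those in which exactly one of $g_a, g_b$ equals $0_G$. Such representations reduce to $x = g_a$ or $x = -g_b$, so reversibility ($-\mathcal S = \mathcal S$) pairs them up in a controlled way.

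For the forward direction, suppose $\mathcal S$ is reversible with $0_G \in \mathcal S$ and parameters $(n,m,\lambda,\mu)$. Set $\mathcal S' := \mathcal S \setminus \{0_G\}$, and note that $\mathcal S' \cup \{0_G\} = \mathcal S$, so the relative set in Definition~\ref{def_pds} is unchanged by the passage from $\mathcal S$ to $\mathcal S'$. For each nonzero $x \in \mathbb G$, I would count the ordered pairs $(g_a, g_b) \in \mathcal S \times \mathcal S$ with $g_a \ne g_b$ and $g_a - g_b = x$ that are lost upon removing $0_G$: these are precisely $(x,0_G)$ (present iff $x \in \mathcal S$) and $(0_G,-x)$ (present iff $-x \in \mathcal S$), and they are distinct since $x \ne 0_G$. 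Reversibility forces the two membership conditions to coincide, so exactly two pairs vanish when $x \in \mathcal S'$ and none vanish when $x \notin \mathcal S \cup \{0_G\}$. Hence the representation counts for $\mathcal S'$ become $\lambda - 2$ and $\mu$ respectively, establishing that $\mathcal S'$ is an $(n,m-1,\lambda-2,\mu)$ partial difference set. Reversibility of $\mathcal S'$ is inherited from $\mathcal S$ because $-0_G = 0_G$, and $0_G \notin \mathcal S'$ by construction, so $\mathcal S'$ is regular.

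The converse is essentially symmetric. Given a regular $\mathcal S$ with parameters $(n,m,\lambda,\mu)$, set $\mathcal S' := \mathcal S \cup \{0_G\}$, so $\mathcal S' \cup \{0_G\} = \mathcal S'$. For each nonzero $x$, the newly introduced pairs in $\mathcal S' \times \mathcal S'$ representing $x$ are once again $(x,0_G)$ and $(0_G,-x)$, distinct and both present when $x \in \mathcal S$ (by reversibility), and both absent when $x \notin \mathcal S$. Thus the representation counts become $\lambda + 2$ for $x \in \mathcal S = \mathcal S' \setminus \{0_G\}$ and $\mu$ for $x \notin \mathcal S'$, yielding parameters $(n,m+1,\lambda+2,\mu)$. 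Reversibility of $\mathcal S'$ is immediate.

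There is no real obstacle here: the proposition is a clean bookkeeping exercise reflecting the stability of the partial difference set axioms under the symmetric operation of adjoining or removing the identity. The only point requiring a moment's care is to verify that the two pairs $(x,0_G)$ and $(0_G,-x)$ are genuinely distinct (so the change in count is really $\pm 2$, not $\pm 1$), which holds precisely because $x \ne 0_G$, and to confirm that in the forward direction $\lambda \geq 2$, which is automatic since each nonzero $x \in \mathcal S$ already admits the two representations above inside $\mathcal S$.
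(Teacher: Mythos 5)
Your proof is correct. The paper states this proposition as a cited result from the partial difference set literature and gives no proof of its own, so there is nothing internal to compare against; your direct counting argument --- isolating the two ordered pairs $(x,0_G)$ and $(0_G,-x)$ whose presence is governed jointly by reversibility, and checking that the relative set $\mathcal S'\cup\{0_G\}$ is unchanged in each direction --- is the standard bookkeeping proof of this fact, and you correctly handle the two points where care is needed (distinctness of the two pairs and $\lambda\ge 2$).
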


\begin{cor}\label{cor_pds_btf_pairs}
Let $\mathcal S$ be a $(n,m,\lambda,\mu)$-partial difference set for $\mathbb G$ with $\lambda \neq \mu$ (so $\mathcal S$ is reversible) and let $\mathcal F$ be the biangular $\mathbb G$-harmonic frame for $\mathbb C^m$ generated by $\mathcal S$.  
\begin{enumerate}
\item
If $0_G \in \mathcal S$, then the $\mathbb G$-harmonic frame $\mathcal F'$ for $\mathbb C^{m-1}$ generated by $\mathcal S\backslash \{0_G\}$ is either equiangular or biangular.
\item
If $\mathcal S$ is regular, then the $\mathbb G$-harmonic frame $\mathcal F'$ for $\mathbb C^{m+1}$ generated by $\mathcal S\cup \{0_G\}$ is either equiangular or biangular.
\end{enumerate}
\end{cor}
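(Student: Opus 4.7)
The plan is to derive this corollary as an essentially immediate consequence of the preceding proposition together with Theorem~\ref{cor_pds_btf}. The preceding proposition establishes a bijective toggling between reversible partial difference sets containing $0_G$ and regular partial difference sets: removing (resp.\ adjoining) $0_G$ converts the parameters $(n, m, \lambda, \mu)$ into $(n, m-1, \lambda - 2, \mu)$ (resp.\ $(n, m+1, \lambda + 2, \mu)$), and the resulting set is again a partial difference set. Since the new set is itself a partial difference set, Theorem~\ref{cor_pds_btf} applies directly to the harmonic frame it generates, producing either an ETF or a BTF depending on whether the new pair of parameters coincides.

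For part (1), I would argue as follows. Assume $\mathcal{S}$ is a reversible $(n, m, \lambda, \mu)$-partial difference set with $\lambda \neq \mu$ and $0_G \in \mathcal{S}$. By the preceding proposition, $\mathcal{S} \setminus \{0_G\}$ is a regular $(n, m-1, \lambda - 2, \mu)$-partial difference set. Applying Theorem~\ref{cor_pds_btf} to this new partial difference set yields two cases: either $\lambda - 2 = \mu$, in which case $\mathcal{S}\setminus\{0_G\}$ is an $(n, m-1, \mu)$-difference set and the associated harmonic frame $\mathcal{F}'$ is an ETF for $\mathbb{C}^{m-1}$ by Theorem~\ref{thm_char_ETFs}; or $\lambda - 2 \neq \mu$, in which case we are in the third case of Theorem~\ref{cor_pds_btf} (since $0_G \notin \mathcal{S} \setminus \{0_G\}$) and $\mathcal{F}'$ is a BTF for $\mathbb{C}^{m-1}$ with frame angles given by the corresponding formula.

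For part (2), the argument is symmetric: if $\mathcal{S}$ is regular, then by the preceding proposition, $\mathcal{S} \cup \{0_G\}$ is a reversible $(n, m+1, \lambda+2, \mu)$-partial difference set. Again invoking Theorem~\ref{cor_pds_btf}, either $\lambda + 2 = \mu$ and $\mathcal{F}'$ is an ETF for $\mathbb{C}^{m+1}$, or $\lambda + 2 \neq \mu$ and, since $0_G \in \mathcal{S} \cup \{0_G\}$, we fall into the second case of Theorem~\ref{cor_pds_btf}, yielding that $\mathcal{F}'$ is a BTF for $\mathbb{C}^{m+1}$ with the corresponding frame angles.

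There is no real obstacle here; the corollary is a chained application of two previously established results, and the only thing to verify is that the parity of $\lambda$ versus $\mu$ in the shifted parameters correctly separates the ETF case from the BTF case. In particular, the hypothesis $\lambda \neq \mu$ on $\mathcal{S}$ does not preclude the shifted parameters $(\lambda \pm 2, \mu)$ from satisfying $\lambda \pm 2 = \mu$, which is precisely why the statement must allow for either an equiangular or biangular outcome rather than asserting biangularity alone.
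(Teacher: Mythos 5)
Your proposal is correct and matches the paper's intent exactly: the paper gives no explicit proof of Corollary~\ref{cor_pds_btf_pairs}, treating it as an immediate consequence of the preceding proposition (which supplies the shifted parameters $(n, m\mp 1, \lambda \mp 2, \mu)$) combined with Theorem~\ref{cor_pds_btf}, which is precisely the chain of reasoning you carry out. Your closing observation --- that $\lambda \neq \mu$ does not prevent $\lambda \pm 2 = \mu$, which is why the conclusion must allow the equiangular case --- correctly identifies the one point worth checking.
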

}

Because of developments in the theory of partial difference sets in recent decades \cite{MR1277942, MR765286}, there are numerous known examples of partial difference sets which therefore produce biangular tight frames by Theorem~\ref{cor_pds_btf}.     In Table~\ref{tbl_pds} in the appendix, we list several infinite families of partial difference sets and the information about the angles sets of the corresponding harmonic frames. The following example, which is due to Paley \cite{MR1277942}, is obtained by generalizing the construction of certain cyclotomic difference sets. 


\begin{ex}\label{ex_quad}
Let $\mathbb G = \mathbb Z_p$, where $p$ is an odd prime, 
 and let 
$\RES{2} = \{ z^2 : z \in \mathbb Z_p^* \}$, the set of {\it quadratic residues} in $\mathbb Z_p^*$.  If $p \equiv_4 3$, then $\RES{2}$ is a $\left(p,\frac{p-1}{2},\frac{p-3}{4}\right)$-difference set for $\mathbb Z_p$ and therefore generates an equiangular harmonic frame by Theorem~\ref{thm_char_ETFs}; otherwise,  $p \equiv_4 1$ and $\RES{2}$ is a $\left(p, \frac{p-1}{2}, \frac{p-5}{4}, \frac{p-1}{4} \right)$-partial difference set for $\mathbb Z_p$ and therefore generates a biangular harmonic frame by the preceding theorem.
\end{ex}

In the next section, we examine a class of bidifference sets which can be viewed as a generalization of the partial difference sets from Example~\ref{ex_quad}.  We call them {\it Gaussian difference sets} because of their relationship with Gauss sums.

\subsubsection{Gaussian difference sets}

Given an odd prime $p$, we denote the  multiplicative subgroup of {\bf $s$-th residues in $\mathbb Z_p^*$}  by
$$
\RES{s} := \left\{z^s : z \in \mathbb Z_{p}^* \right\},
$$
and we recall from \cite{MR1625181} that $\left| \RES{2} \right| = \frac{p-1}{2}$.
\begin{defn}
Let $\mathbb G = \mathbb Z_p$, where $p$ is prime, and let $\mathcal S \subset \mathbb G$.  We say that $\mathcal S$ is  a {\bf $(p,m, \lambda, \mu)$-Gaussian difference set for $\mathbb G$}
if 
$\mathcal S$ is a $(p,m, \frac{p+1}{2}, \lambda, \mu)$-bidifference set for $\mathbb G$  relative to $\RES{2} \cup \{0\}$.
\end{defn}

Like divisible differnce sets and partial difference sets, every harmonic frame generated by a Gaussian difference set is either equiangular or biangular.  This fact depends on the theory of quadratic residues and Gauss sums.
Given any odd prime $p$ and $a \in \mathbb Z_p^*$, we define the {\bf quadratic residue (or Legendre) symbol}  by
$$(a/p)_2 \equiv_p a^{\frac{p-1}{2}}.$$

Euler provided the following characterization of the quadratic residues.

\begin{thm}\label{th_eul_crit}[Euler's criterion, \cite{MR1625181}]
If $p$ is an odd prime and $a \in \mathbb Z_p^*$, then
$$(a/p)_2 \equiv_p \left\{ \begin{array}{cc} 1 , & a \in \RES{2} \\ -1 , & a \notin \RES{2} \end{array} \right..
$$
\end{thm}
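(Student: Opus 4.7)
The plan is to deduce Euler's criterion from Fermat's little theorem together with the fact that $\mathbb Z_p$ is a field. First, I would note that Fermat's little theorem gives $a^{p-1} \equiv_p 1$ for every $a \in \mathbb Z_p^*$, so $a^{(p-1)/2}$ is a square root of $1$ in $\mathbb Z_p$. Since $\mathbb Z_p$ is a field, the factorization $x^2 - 1 = (x-1)(x+1)$ forces $a^{(p-1)/2} \equiv_p \pm 1$, which already establishes the dichotomy underpinning the statement. It only remains to see which value occurs for which $a$.

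The forward direction, that $a \in \RES{2}$ implies $(a/p)_2 \equiv_p 1$, is immediate: writing $a = z^2$ with $z \in \mathbb Z_p^*$, one computes $a^{(p-1)/2} = z^{p-1} \equiv_p 1$ by Fermat. For the converse, my plan is to run a counting argument using the polynomial $x^{(p-1)/2} - 1 \in \mathbb Z_p[x]$. Because $\mathbb Z_p$ is a field, this polynomial has at most $(p-1)/2$ roots. On the other hand, the forward direction exhibits every element of $\RES{2}$ as a root, and the paper has already recalled that $|\RES{2}| = (p-1)/2$. Hence $\RES{2}$ accounts for \emph{all} roots of $x^{(p-1)/2} - 1$, so any $a \in \mathbb Z_p^* \setminus \RES{2}$ must satisfy $a^{(p-1)/2} \not\equiv_p 1$, and the initial dichotomy then forces $a^{(p-1)/2} \equiv_p -1$.

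There is no genuine obstacle here; the result is classical, and the only ingredients beyond the definition of $(a/p)_2$ are Fermat's little theorem, the observation that a polynomial of degree $d$ over a field has at most $d$ roots, and the cardinality $|\RES{2}| = (p-1)/2$ that has already been quoted in the paragraph preceding the theorem. The entire proof should fit in a short paragraph.
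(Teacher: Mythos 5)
The paper does not prove this statement at all; it is quoted as a classical result with a citation to the number theory literature, so there is no in-paper argument to compare against. Your proposal is the standard textbook proof of Euler's criterion and it is complete and correct: Fermat's little theorem gives $\left(a^{(p-1)/2}\right)^2 \equiv_p 1$, the field structure of $\mathbb Z_p$ forces $a^{(p-1)/2} \equiv_p \pm 1$ (and these two values are distinct since $p$ is odd), squares land on $+1$, and the root-counting bound for $x^{(p-1)/2}-1$ together with $|\RES{2}| = \frac{p-1}{2}$ shows that the squares exhaust the value $+1$, leaving $-1$ for the nonresidues. This matches the definition $(a/p)_2 \equiv_p a^{\frac{p-1}{2}}$ adopted just before the theorem, so nothing further is needed.
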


%

In accordance with Euler's criterion, we assign the real value $1$ or $-1$ to the Legendre symbol if it appears in a numerical computation.  In particular, we use this idea in the next theorem, where we state how to evaluate quadratic Gaussian sums over cyclic groups of odd prime order.

\begin{thm}\label{th_gsum}[Quadratic Gauss Sums, \cite{MR1625181}]
If $p$ is an odd prime, then for each $a\in \mathbb Z_p^*$, we have
$$
\sum\limits_{x \in \mathbb Z_p} e^{\frac{2 \pi i a x^2}{p}} = 
\left\{
\begin{array}{cc}
(a/p)_2 \sqrt{p}, & p \equiv_4 1\\
(a/p)_2  \sqrt{p} i, & p \equiv_4 3\\
\end{array}
\right. .
$$
\end{thm}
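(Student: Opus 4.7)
The plan is to reduce the computation of $\sum_{x \in \mathbb Z_p} e^{2\pi i a x^2/p}$ for general $a \in \mathbb Z_p^*$ to the case $a=1$, and then to evaluate this particular quadratic Gauss sum. Write $S(a) := \sum_{x \in \mathbb Z_p} e^{2\pi i a x^2/p}$. The first step is to show that $S(a) = (a/p)_2 S(1)$. The direct way is to split the sum by whether $x^2$ hits a fixed quadratic residue: since each nonzero square is attained by exactly two values of $x$, one obtains $S(1) = 1 + 2\sum_{r \in \RES{2}} e^{2\pi i r/p}$ and $S(a) = 1 + 2\sum_{r \in a\cdot \RES{2}} e^{2\pi i r/p}$. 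If $a \in \RES{2}$, then $a\cdot \RES{2} = \RES{2}$ and the two sums agree; if $a\notin \RES{2}$, then $a \cdot \RES{2}$ is the set of non-residues, and using Corollary~\ref{cor_gen_sum_roots_unity} (which forces $\sum_{r \in \RES{2}} e^{2\pi i r/p} + \sum_{r \notin \RES{2} \cup \{0\}} e^{2\pi i r/p} = -1$) one sees that $S(a) = -S(1)$. In both cases, Euler's criterion (Theorem~\ref{th_eul_crit}) identifies the sign with $(a/p)_2$.

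The second step is to compute the modulus $|S(1)|$. Expanding
\begin{equation*}
|S(1)|^2 = \sum_{x,y \in \mathbb Z_p} e^{2\pi i (x^2-y^2)/p}
\end{equation*}
and performing the change of variables $x = y + u$ yields a sum of the form $\sum_{y,u} e^{2\pi i (2uy + u^2)/p}$. For each fixed $u\neq 0$, summing in $y$ over $\mathbb Z_p$ gives $0$ by Corollary~\ref{cor_gen_sum_roots_unity} (since $2u \not\equiv 0$ when $p$ is odd), while $u=0$ contributes $p$. Hence $|S(1)|^2 = p$, so $|S(1)| = \sqrt{p}$. Combined with Step~1, this shows $S(a)$ equals $\pm (a/p)_2 \sqrt{p}$ when $p \equiv_4 1$ and $\pm (a/p)_2 \sqrt{p}\, i$ when $p \equiv_4 3$; the case split arises because one readily checks that $\overline{S(1)} = S(-1) = (-1/p)_2 S(1)$, which forces $S(1)$ to be real when $p \equiv_4 1$ and purely imaginary when $p \equiv_4 3$.

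The hard part, which is the content of Gauss's classical theorem, is pinning down the sign of $S(1)$, i.e., showing it is $+\sqrt{p}$ or $+\sqrt{p}\, i$ rather than the negative. Several routes exist, none of them short: one can use Schur's evaluation of the eigenvalues of the finite Fourier transform matrix together with its determinant; one can apply Poisson summation to a Gaussian truncation of the sum (Dirichlet's approach); or one can deform a contour integral of a theta-type function. My preference would be Schur's argument, since it keeps everything in the finite, algebraic setting used throughout this paper: the $p \times p$ matrix $F$ with entries $F_{jk} = p^{-1/2} e^{2\pi i jk/p}$ satisfies $F^4 = I$, so its eigenvalues lie in $\{1,-1,i,-i\}$ with multiplicities determined by $\TR(F)$ and $\det(F)$, and the latter can be evaluated by a Vandermonde-type computation; comparing, one extracts the sign of $S(1)$. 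Carrying out this eigenvalue bookkeeping cleanly is where essentially all of the proof's real work lies, and I would invoke it as the black box that completes the result.
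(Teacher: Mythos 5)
The paper does not actually prove this theorem: it is quoted from the classical literature (\cite{MR1625181}) and used as a black box, so there is no in-paper argument to compare yours against. That said, your outline is the standard textbook proof and the steps you spell out are all correct: the reduction $S(a)=(a/p)_2\,S(1)$ via the two-to-one nature of $x\mapsto x^2$ on $\mathbb Z_p^*$ together with the splitting of $\mathbb Z_p^*$ into residues and non-residues (and the identity $\sum_{r\in\mathbb Z_p^*}e^{2\pi i r/p}=-1$); the computation $|S(1)|^2=p$ by the substitution $x=y+u$, where the inner sum over $y$ vanishes for $u\neq 0$ precisely because $p$ is odd; and the observation $\overline{S(1)}=S(-1)=(-1/p)_2\,S(1)$, which correctly sorts $p\equiv_4 1$ into the real case and $p\equiv_4 3$ into the purely imaginary case. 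Be aware, however, that what you have actually established is only $S(1)\in\{\pm\sqrt p\}$ or $S(1)\in\{\pm i\sqrt p\}$: the entire difficulty of Gauss's theorem is pinning down the sign, and your proposal explicitly delegates that to Schur's eigenvalue bookkeeping for the finite Fourier matrix (or Dirichlet's Poisson-summation argument) without carrying it out. As a self-contained proof the proposal is therefore incomplete at exactly the step that separates this theorem from a routine exercise --- but since the authors themselves import the result wholesale from \cite{MR1625181}, invoking a standard reference for the sign determination is no worse than what the paper does, and everything you do prove is sound.
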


\begin{cor}\label{cor_gauss_val}
If $p$ is an odd prime, then for each $a\in \mathbb Z_p^*$, we have
$$
\sum\limits_{j \in \RES{2} \cup \{0 \} } e^{\frac{2 \pi i a j}{p}} = 
\left\{
\begin{array}{cc}
\frac{1+\sqrt{p}}{2} & p \equiv_4 1, a \in \RES{2}\\
\frac{1-\sqrt{p}}{2} & p \equiv_4 1, a \notin \RES{2}\\
\frac{1+\sqrt{p}i}{2} & p \equiv_4 3, a \in \RES{2}\\
\frac{1-\sqrt{p}i}{2} & p \equiv_4 3, a \notin \RES{2}\\
\end{array}
\right. .
$$
\end{cor}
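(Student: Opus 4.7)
The plan is to reduce the sum over $R_p^{(2)} \cup \{0\}$ directly to the quadratic Gauss sum evaluated in Theorem~\ref{th_gsum}, then apply Euler's criterion (Theorem~\ref{th_eul_crit}) to identify the Legendre symbol in each of the four cases.

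First I would observe that the map $x \mapsto x^2$ from $\mathbb{Z}_p^*$ to $R_p^{(2)}$ is exactly two-to-one, since $x^2 = (-x)^2$ and $-x \neq x$ in $\mathbb{Z}_p^*$ (as $p$ is odd). Combined with the fact that $0^2 = 0$, this yields the identity
$$
\sum_{x \in \mathbb{Z}_p} e^{\frac{2 \pi i a x^2}{p}} = 1 + 2 \sum_{j \in R_p^{(2)}} e^{\frac{2 \pi i a j}{p}},
$$
and therefore
$$
\sum_{j \in R_p^{(2)} \cup \{0\}} e^{\frac{2\pi i a j}{p}} = 1 + \sum_{j \in R_p^{(2)}} e^{\frac{2\pi i a j}{p}} = \frac{1}{2} + \frac{1}{2} \sum_{x \in \mathbb{Z}_p} e^{\frac{2\pi i a x^2}{p}}.
$$

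Next I would substitute the value of the quadratic Gauss sum from Theorem~\ref{th_gsum}, which gives either $(a/p)_2 \sqrt{p}$ or $(a/p)_2 \sqrt{p}\, i$ depending on whether $p \equiv_4 1$ or $p \equiv_4 3$. Finally, by Euler's criterion, $(a/p)_2 = 1$ when $a \in R_p^{(2)}$ and $(a/p)_2 = -1$ otherwise; splitting into the four cases then produces precisely the four expressions $\tfrac{1 \pm \sqrt p}{2}$ and $\tfrac{1 \pm \sqrt p\, i}{2}$ listed in the statement.

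No step of this plan presents a real obstacle; the only subtlety is justifying the two-to-one counting carefully (making sure the $x = 0$ term is separated out before dividing by two), after which the result is a direct calculation using the two cited theorems.
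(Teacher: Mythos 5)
Your proposal is correct and is essentially identical to the paper's proof: the paper uses the same observation that $\sum_{x \in \mathbb{Z}_p} e^{2\pi i a x^2/p} = 2\sum_{j \in R^{(2)}_p \cup \{0\}} e^{2\pi i a j/p} - 1$ (equivalent to your rearranged identity) and then evaluates the Legendre symbol via Euler's criterion in the quadratic Gauss sum. You merely spell out the two-to-one counting argument in more detail than the paper does.
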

\begin{proof}
This follows by evaluating the Legendre symbol in the preceding theorem and the observation that
$$
\sum\limits_{x \in \mathbb Z_p} e^{\frac{2 \pi i a x^2}{p}}
=
2 \sum\limits_{\tiny j \in \RES{2} \cup \{0 \} } e^{\frac{2 \pi i a j}{p}} -1.
$$
\end{proof}


Now we show that every harmonic frame generated by a Gaussian difference set is either equiangular or biangular.

\begin{thm}
Let $\mathbb G = \mathbb Z_p^*$, where $p$ is an odd prime, and suppose that $\mathcal S \subset \mathbb G$ is a $(p, m,  \lambda, \mu )$-Gaussian difference set for $\mathbb G$.  If $\mathcal F = \{ f_g \}_{g \in \mathbb G}$ is the harmonic frame for $\mathbb C^m$ generated by $\mathcal S$, then either
\begin{enumerate}
\item $p \equiv_4 3$, $\mathcal S$ is a $(p, m, \lambda)$-difference set  for $\mathbb G$ and $\mathcal F$ is an equiangular tight frame, or
\item  $p \equiv_4 1$,  $\mathcal S$ is a $(p, m, \lambda)$-difference set  for $\mathbb G$ and $\mathcal F$ is an equiangular tight frame, or
\item   $p \equiv_4 1$ and $\mathcal F$ is a biangular tight frame with frame angles $\alpha_1, \alpha_2$ and frame angle multiplicities $\tau_1, \tau_2$,
where
$$
\alpha_1, \alpha_2 =
 \frac{1}{m}
\sqrt{
 (m-\lambda) + (\lambda - \mu) \left(  \frac{1 \pm \sqrt{p}}{2}\right)  
}$$
and $\tau_1 = \tau_2 = \frac{p-1}{2}$.
\end{enumerate}
\end{thm}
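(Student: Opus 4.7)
The plan is to combine Corollary~\ref{cor_char_kangular} with the Gauss sum evaluation from Corollary~\ref{cor_gauss_val} to analyze the frame angles directly. Specifically, I need to count the distinct values of $\sum_{\xi\in\RES{2}\cup\{0\}}\rho_z(\xi)$ as $z$ ranges over $\mathbb{G}\setminus\{0\}$, and then substitute these values into Equation~(\ref{eq_angle_simp2}) from Proposition~\ref{prop_angles_simp2} to extract the frame angles and count the multiplicities.

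First I would handle the more transparent case $p\equiv_4 1$. By Corollary~\ref{cor_gauss_val}, the character sum attains exactly two real values: $\tfrac{1+\sqrt{p}}{2}$ when $z\in\RES{2}$, and $\tfrac{1-\sqrt{p}}{2}$ when $z\in\mathbb{G}\setminus(\RES{2}\cup\{0\})$. If $\lambda=\mu$, then $\mathcal{S}$ is an $(n,m,\lambda)$-difference set and $\mathcal{F}$ is an ETF by Theorem~\ref{thm_char_ETFs}, which establishes case~(2). If $\lambda\neq\mu$, then since $\sqrt{p}\neq 0$ the two candidate values for $\sum\rho_z(\xi)$ are distinct, so Corollary~\ref{cor_char_kangular} gives that $\mathcal{F}$ is biangular. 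Substituting the two sum values directly into Equation~(\ref{eq_angle_simp2}) yields the stated frame angles $\alpha_1,\alpha_2$. For the multiplicities, I would use $|\RES{2}|=\tfrac{p-1}{2}$ together with $|\mathbb{G}\setminus(\RES{2}\cup\{0\})|=\tfrac{p-1}{2}$ to conclude $\tau_1=\tau_2=\tfrac{p-1}{2}$, giving case~(3).

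Next I would handle $p\equiv_4 3$. Here Corollary~\ref{cor_gauss_val} gives non-real sums $\tfrac{1\pm\sqrt{p}\,i}{2}$. The left side of Equation~(\ref{eq_angle_simp2}) is a real number, so the right side must be as well. Since $m-\lambda$ is real and the imaginary part of $\sum_{\xi\in\RES{2}\cup\{0\}}\rho_z(\xi)$ equals $\pm\tfrac{\sqrt{p}}{2}\neq 0$ for every $z\neq 0$, the coefficient $\lambda-\mu$ must vanish; that is, $\lambda=\mu$. Consequently $\mathcal{S}$ is an $(n,m,\lambda)$-difference set and $\mathcal{F}$ is an ETF by Theorem~\ref{thm_char_ETFs}, establishing case~(1).

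The main obstacle is recognizing that the $p\equiv_4 3$ branch cannot admit a genuine biangular instance. Combinatorially, this is a nonexistence statement for proper Gaussian difference sets when $p\equiv_4 3$, but the frame-theoretic angle equation gives a clean route: the imaginary Gauss sum is incompatible with the realness of $|\langle f_x, f_y\rangle|^2$ unless $\lambda=\mu$. A secondary routine check is verifying that the two candidate angle expressions in case~(3) are distinct, which reduces to $\sqrt{p}\neq 0$ and $\lambda\neq\mu$, both of which hold by hypothesis.
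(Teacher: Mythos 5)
Your proposal is correct and follows essentially the same route as the paper: both use Corollary~\ref{cor_gauss_val} with Proposition~\ref{prop_angles_simp2} and Corollary~\ref{cor_char_kangular}, handle $p\equiv_4 1$ by splitting on $\lambda=\mu$ versus $\lambda\neq\mu$, and dispose of $p\equiv_4 3$ by observing that the non-real Gauss sum forces $\lambda=\mu$ since $m^2|\langle f_x,f_y\rangle|^2$ must be real. The multiplicity count via $|\RES{2}|=\tfrac{p-1}{2}$ also matches the paper's argument.
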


\begin{proof}
If $p \equiv_4 3$, then Proposition~\ref{prop_angles_simp2} and Corollary~\ref{cor_gauss_val} show that 
$$
m^2 | \langle f_x, f_y \rangle |^2 = (m-\lambda) + (\lambda - \mu) \left( \frac{1 \pm \sqrt{p} i}{2} \right)
$$ for every $x,y \in \mathbb G$ with $x \neq y$, and since this value is always a nonnegative real number, it follows that $\lambda=\mu$, which means $\mathcal S$ is a $(p,m,\lambda)$-difference set for $\mathbb G$ and therefore $\mathcal F$ is equiangular by Theorem~\ref{thm_char_ETFs}.

If $p \equiv_4 1$, then either $\lambda=\mu$ or $\lambda \neq \mu$.   If $\lambda = \mu$, then $\mathcal S$ is a $(p,m,\lambda)$-difference set for $\mathbb G$, so $\mathcal F$ is equiangular  by Theorem~\ref{thm_char_ETFs}.  If $\lambda \neq \mu$, then Corollary~\ref{cor_char_kangular} together with Corollary~\ref{cor_gauss_val} show that $\mathcal F$ is biangular.  In this case, the claimed values for the frame angles are computed by substituting the value of the summation from Corollary~\ref{cor_gauss_val} into Equation~(\ref{eq_angle_simp2}) from Proposition~\ref{prop_angles_simp2} and the claimed values for the frame angle multiplicites follow from the fact $\left|\RES{2} \right| =  \frac{p-1}{2}$. 
\end{proof}

We conclude this subsection by constructing and studying a family of Gaussian difference sets which admit examples which are neither divisible difference sets nor partial difference sets. In particular,  the {quartic residues} in certain cyclic groups of odd prime order have this property.  In order to do this, we require a few more results from classical number theory.

Given an odd prime $p$, we define the
{\bf quartic residue symbol} by
$$
(a/p)_4 \equiv_p a^{\frac{p-1}{4}}.
$$
\begin{prop}\label{prop_2notres}
If $p$ is an odd prime and
 $a \in \RES{2}$, then
$$(a/p)_4 \equiv_p \left\{ \begin{array}{cc} 1, & a \in \RES{4} \\ -1 , & a \notin \RES{4} \end{array} \right. .
$$
\end{prop}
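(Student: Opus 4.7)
The plan is to imitate the classical proof of Euler's criterion (Theorem~\ref{th_eul_crit}), one level deeper. Since the symbol $(a/p)_4$ is defined via $a^{(p-1)/4}$, the exponent must be an integer, so implicitly one works under $p \equiv_4 1$; under this assumption $\RES{4}$ is an index-$2$ subgroup of $\RES{2}$ (if $p \equiv_4 3$ then $\RES{4} = \RES{2}$ and there is nothing to separate). I would state this assumption at the outset of the proof for clarity.

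First I would fix a primitive root $g$ of $\mathbb Z_p^*$ and, using $a \in \RES{2}$, write $a \equiv_p g^{2k}$ for some integer $k$. The next step is to characterize membership in $\RES{4}$ via the parity of $k$: if $k = 2k'$ is even then $a = g^{4k'} \in \RES{4}$, while conversely $a = g^{4j}$ forces $2k \equiv 4j \pmod{p-1}$, and since $4 \mid p-1$ this congruence has a solution $j$ only when $k$ is even. Hence $a \in \RES{4}$ if and only if $k$ is even.

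Then I would compute directly
$$
(a/p)_4 \;\equiv_p\; a^{(p-1)/4} \;\equiv_p\; g^{2k \cdot (p-1)/4} \;\equiv_p\; g^{k(p-1)/2}.
$$
Fermat's little theorem gives $g^{p-1} \equiv_p 1$, so $g^{(p-1)/2}$ is a square root of $1$ in $\mathbb Z_p$; since $g$ is a primitive root it cannot equal $1$, so $g^{(p-1)/2} \equiv_p -1$. Therefore $(a/p)_4 \equiv_p (-1)^k$, and combining with the parity characterization of the previous step yields the stated dichotomy.

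There is no genuine obstacle here: the argument is essentially a three-line unwinding of the definitions after a primitive root is introduced. The only mildly subtle point is the justification that $a \in \RES{4}$ corresponds exactly to $k$ even, which is precisely where the hypothesis $p \equiv_4 1$ (equivalently, $4 \mid p-1$) is invoked; without it the characterization collapses.
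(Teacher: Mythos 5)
Your argument is correct, but it runs through different machinery than the paper's. The paper never introduces a primitive root: it writes $a \equiv_p b^2$ and observes that $(a/p)_4 \equiv_p b^{(p-1)/2} \equiv_p (b/p)_2$, so the whole question reduces to applying Euler's criterion (Theorem~\ref{th_eul_crit}) to a square root $b$ of $a$, together with the easy observation that $b \in \RES{2}$ forces $a \in \RES{4}$ (and, in the residue case, re-choosing the representation $a \equiv_p c^4$ so that the relevant square root $c^2$ is manifestly in $\RES{2}$). Your version instead fixes a generator $g$ of $\mathbb Z_p^*$, writes $a \equiv_p g^{2k}$, characterizes $\RES{4}$-membership by the parity of $k$ (correctly using $4 \mid p-1$ to solve the congruence $2k \equiv 4j \pmod{p-1}$), and computes $(a/p)_4 \equiv_p (-1)^k$. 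Both proofs are sound; yours costs the existence of a primitive root (a nontrivial input the paper's proof does not need) but buys a single uniform formula $(-1)^k$ and makes explicit the hypothesis $p \equiv_4 1$ needed for the exponent $(p-1)/4$ to be an integer --- a point the paper leaves implicit, though it is harmless in context since the proposition is only invoked for primes $p = 8q+5$.
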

\begin{proof}
Since $a \in \RES{2}$, we can write $a\equiv_p b^2$ for some $b \in \mathbb Z_p^*$.
 If $a\notin \RES{4}$, then $b\notin \RES{2}$, so we have $(a/p)_4 \equiv_p b^{\frac{p-1}{2}}\equiv_p(b/p)_2\equiv_p -1.$  Otherwise, we can write $a\equiv_p c^4$ for some $c  \in \mathbb Z_p^*$, and we have 
 $(a/p)_4 \equiv_p (c^2)^{\frac{p-1}{2}}\equiv_p (c^2/p)_2\equiv_p 1.$
\end{proof}

The following characterization of whether $2$ is a quadratic residue is due to Gauss \cite{MR1625181}.
\begin{thm}\label{th_gauss1}[Gauss {\cite{MR1625181}}]
 Given a prime $p$,
then
$$
(2/p)_2
\equiv_p
 \left\{ \begin{array}{cc} 
1 , & p\equiv_8 \pm 1  \\ 
-1 , & p \equiv_8 \pm 3 \end{array} \right..
$$
\end{thm}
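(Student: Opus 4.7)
The claim is the classical ``second supplement'' to quadratic reciprocity. My plan is to derive it via \emph{Gauss's Lemma}, which asserts that for an odd prime $p$ and $a \in \mathbb Z_p^*$, one has $(a/p)_2 \equiv_p (-1)^{n(a,p)}$, where $n(a,p)$ denotes the number of elements of $\{a, 2a, \ldots, \tfrac{p-1}{2} a\}$ whose least positive residues modulo $p$ exceed $p/2$. The lemma itself is standard and follows by computing the product of those residues in two ways: on the one hand it equals $a^{(p-1)/2}\cdot \left(\tfrac{p-1}{2}\right)!$, while on the other hand, after folding each residue that lies in $\{\tfrac{p+1}{2}, \ldots, p-1\}$ onto its additive inverse in $\{1, \ldots, \tfrac{p-1}{2}\}$, the product collapses to $(-1)^{n(a,p)} \cdot \left(\tfrac{p-1}{2}\right)!$; cancellation of the factorials followed by Euler's criterion (Theorem~\ref{th_eul_crit}) yields the stated congruence.

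I would then specialize to $a = 2$. The relevant set $\{2, 4, \ldots, p-1\}$ already consists of its own least positive residues, so $n(2,p)$ simply counts the even integers in the interval $(p/2, p)$, which is $\lfloor (p-1)/2 \rfloor - \lfloor p/4 \rfloor$.

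Finally, I would split into four cases based on the residue class of $p$ modulo $8$. Writing $p = 8k + r$ for $r \in \{1, 3, 5, 7\}$, a direct floor computation gives
\[
n(2,p) \;=\; \left\lfloor \frac{p-1}{2} \right\rfloor - \left\lfloor \frac{p}{4} \right\rfloor \;=\; \begin{cases} 2k, & r = 1, \\ 2k+1, & r = 3, \\ 2k+1, & r = 5, \\ 2k+2, & r = 7, \end{cases}
\]
so that $n(2,p)$ is even precisely when $p \equiv_8 \pm 1$ and odd precisely when $p \equiv_8 \pm 3$, which is the theorem.

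The only mildly nontrivial ingredient is Gauss's Lemma itself; beyond that, the proof is a routine floor-function calculation with no genuine obstacle. A more self-contained alternative, more in the spirit of the Gauss-sum methods already employed in this section, would work inside $\mathbb Z[\zeta_8]/(p)$ using the identity $(\zeta_8 + \zeta_8^{-1})^2 = 2$ together with the Frobenius computation $(\zeta_8 + \zeta_8^{-1})^p \equiv \zeta_8^p + \zeta_8^{-p} \pmod{p}$; evaluating $\zeta_8^p$ again splits cleanly into the four cases of $p \bmod 8$. I would favor the Gauss's Lemma approach for brevity.
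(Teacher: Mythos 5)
The paper does not actually prove this statement: it is imported verbatim from the cited number-theory reference as the classical second supplementary law of quadratic reciprocity, so there is no in-paper argument to compare against. Your proof is correct and is the standard textbook one: Gauss's Lemma reduces the computation of $(2/p)_2$ to the parity of $n(2,p)=\frac{p-1}{2}-\lfloor p/4\rfloor$ (the multiples $2,4,\ldots,p-1$ are already their own least positive residues), and your four-case evaluation giving $2k,\,2k+1,\,2k+1,\,2k+2$ for $p=8k+1,\,8k+3,\,8k+5,\,8k+7$ checks out, so $n(2,p)$ is even exactly when $p\equiv_8 \pm 1$. The only step left implicit is the verification inside Gauss's Lemma that the folded residues $\{r, p-r\}$ land bijectively on $\{1,\ldots,\frac{p-1}{2}\}$, but as you note this is part of the standard lemma and not a gap specific to its application here; either your Gauss's Lemma route or the $\mathbb Z[\zeta_8]$ alternative you sketch would serve equally well if the paper wished to be self-contained.
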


Whenever $p\equiv_4 1$, Gauss also showed that $\RES{4}$ has four distinct cosets in $\mathbb Z_p^*$.  We denote the coset of $\RES{4}$ in $\mathbb Z_p^*$ with representative $a \in \mathbb Z_p^*$ by
$$a \RES{4} := \left\{a r :  r \in \RES{4} \right\}.$$
\begin{thm}\label{th_gauss2}[Gauss, \jh{\cite{MR1625181}}]
Let $p$ be prime with $p \equiv_4 1$.  If $a \in \mathbb Z_p^*$ with $(a/p)_2 =_p -1$, then $\mathbb Z_p^*$ can be written as the disjoint union 
$$\mathbb Z_p^*
=
\RES{4} \dot \cup a \RES{4} \dot \cup a^2 \RES{4} \dot \cup a^3 \RES{4}.
$$ 
\end{thm}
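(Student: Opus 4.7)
The plan is to reduce the claim to a standard fact about cyclic groups via Euler's criterion. Since $p$ is prime, $\mathbb Z_p^*$ is cyclic of order $p-1$, and because $p \equiv_4 1$, we have $4 \mid p-1$, so the subgroup $\RES{4}=\{z^4:z\in \mathbb Z_p^*\}$ of fourth powers has index exactly $4$ in $\mathbb Z_p^*$. Consequently the quotient $\mathbb Z_p^*/\RES{4}$ is cyclic of order $4$. To obtain the asserted disjoint union it suffices to verify that the coset $a\RES{4}$ is a generator of this quotient, for then $\{\RES{4},\,a\RES{4},\,a^2\RES{4},\,a^3\RES{4}\}$ exhaust the four distinct cosets and hence partition $\mathbb Z_p^*$.

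First, I would identify $\RES{2}$ with the unique index-$2$ subgroup of $\mathbb Z_p^*$; since $\RES{4}\subseteq\RES{2}$ and $[\mathbb Z_p^*:\RES{2}]=2$, the image $\RES{2}/\RES{4}$ is the unique subgroup of order $2$ inside the cyclic group $\mathbb Z_p^*/\RES{4}$. Next, I would invoke Euler's criterion (Theorem~\ref{th_eul_crit}): the hypothesis $(a/p)_2\equiv_p -1$ is precisely the statement that $a\notin \RES{2}$, so $a\RES{4}\notin \RES{2}/\RES{4}$.

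Finally, I would use the elementary fact that in a cyclic group of order $4$, any element lying outside the unique subgroup of order $2$ must itself have order $4$ and therefore generate the whole group. Applying this to $a\RES{4}\in\mathbb Z_p^*/\RES{4}$ gives that $a\RES{4}$ has order $4$, so the powers $\RES{4}, a\RES{4}, a^2\RES{4}, a^3\RES{4}$ are pairwise distinct. Since there are only four cosets of $\RES{4}$ in $\mathbb Z_p^*$, these four cosets exhaust the quotient and their union is $\mathbb Z_p^*$, establishing the claim.

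There is no real obstacle here: the result is a repackaging of standard cyclic-group facts together with Euler's criterion, and the only thing worth being careful about is making the identification of $\RES{2}/\RES{4}$ with the order-$2$ subgroup of $\mathbb Z_p^*/\RES{4}$ explicit, so that the group-theoretic dichotomy ``outside the order-$2$ subgroup implies order $4$'' can be applied without ambiguity.
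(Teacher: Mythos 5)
Your argument is correct. Note, however, that the paper does not prove this statement at all: it is quoted as a classical result of Gauss with a citation to the number-theory reference, so there is no in-paper proof to compare against. Your derivation --- $\mathbb Z_p^*/\RES{4}$ is cyclic of order $4$ since $4\mid p-1$, the image of $\RES{2}$ is its unique order-$2$ subgroup, Euler's criterion places $a\RES{4}$ outside that subgroup, and any element of a cyclic group of order $4$ lying outside the order-$2$ subgroup generates --- is a clean, self-contained justification of the cited fact, and every step checks out.
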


Now we show how to form Gaussian difference sets from the quartic residues in certain groups of prime order.


\begin{thm}\label{th_quartic_btfs}
Let $\mathbb G = \mathbb Z_p$.  If $p$
is a prime of the form $p=8q+5$, where $q>0$,
then there exist nonnegative integers $\lambda$ and $\mu$, , where $\lambda + \mu =q$, such that
$\RES{4}$ is a $(p,\frac{p-1}{4}, \lambda , \mu)$-Gaussian difference set for $\mathbb Z_p$ and $\RES{4} \cup \{0\}$ is a $(p,\frac{p+3}{4}, \lambda+1, \mu)$-Gaussian difference set for $\mathbb Z_p$.
\end{thm}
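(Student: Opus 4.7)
The plan is to compute, for each nonzero $z \in \mathbb Z_p$, the number of representations
$$
N(z) := \left| \left\{ (a,b) \in \RES{4} \times \RES{4} : a - b = z \right\} \right|,
$$
and to show that it depends only on whether $z \in \RES{2}$ or $z \notin \RES{2} \cup \{0\}$. The starting observation is that $N$ is invariant under the multiplicative action of $\RES{4}$ on $\mathbb Z_p^*$: if $c \in \RES{4}$, then $c \RES{4} = \RES{4}$, so the substitution $(a,b) \mapsto (ca, cb)$ gives $N(cz) = N(z)$. Hence $N$ is constant on each coset of $\RES{4}$ in $\mathbb Z_p^*$.

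Since $p \equiv_4 1$, Theorem~\ref{th_gauss2} exhibits $\mathbb Z_p^*$ as a disjoint union of four such cosets. To get an explicit description, I would invoke $p = 8q+5 \equiv_8 -3$, which by Theorem~\ref{th_gauss1} forces $(2/p)_2 \equiv_p -1$; applying Theorem~\ref{th_gauss2} with $a = 2$ yields
$$
\mathbb Z_p^* = \RES{4} \cup 2\RES{4} \cup 4\RES{4} \cup 8\RES{4},
$$
with $\RES{2} = \RES{4} \cup 4\RES{4}$ and $\mathbb Z_p^* \setminus \RES{2} = 2\RES{4} \cup 8\RES{4}$. Next I would locate $-1$ in this decomposition: since $(-1/p)_4 \equiv_p (-1)^{(p-1)/4} = (-1)^{2q+1} = -1$, Proposition~\ref{prop_2notres} gives $-1 \notin \RES{4}$, while Euler's criterion (Theorem~\ref{th_eul_crit}) combined with $(-1)^{(p-1)/2} = (-1)^{4q+2} = 1$ gives $-1 \in \RES{2}$; consequently $-1 \in 4\RES{4}$. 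Combining this with the trivial identity $N(z) = N(-z)$ obtained by swapping $(a,b) \mapsto (b,a)$, multiplication by $-1$ sends $\RES{4}$ to $4\RES{4}$ and $2\RES{4}$ to $8\RES{4}$, so $N$ takes a common value $\lambda$ on all of $\RES{2}$ and a common value $\mu$ on all of $\mathbb Z_p^* \setminus \RES{2}$. This is exactly the statement that $\RES{4}$ is a $(p, \frac{p-1}{4}, \frac{p+1}{2}, \lambda, \mu)$-bidifference set for $\mathbb Z_p$ relative to $\RES{2} \cup \{0\}$, i.e., a $(p, \frac{p-1}{4}, \lambda, \mu)$-Gaussian difference set.

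To pin down $\lambda + \mu$, I would sum $N(z)$ over all nonzero $z$: the total equals the number of ordered difference pairs $m(m-1) = \frac{p-1}{4} \cdot \frac{p-5}{4}$, while the coset count gives $(\lambda + \mu) \cdot \frac{p-1}{2}$; equating the two yields $\lambda + \mu = \frac{p-5}{8} = q$. Finally, for $\RES{4} \cup \{0\}$, adjoining $0$ adds to $N(z)$ exactly one pair of the form $(z, 0)$ when $z \in \RES{4}$, and one pair of the form $(0, -z)$ when $-z \in \RES{4}$, i.e., when $z \in 4\RES{4}$; together these contribute $+1$ on all of $\RES{2}$ and $0$ on its complement, giving a $(p, \frac{p+3}{4}, \lambda + 1, \mu)$-Gaussian difference set. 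The main bookkeeping hurdle I anticipate is keeping straight which cosets $\pm\RES{4}$ and $\pm 2\RES{4}$ occupy, and the explicit description $\{\RES{4}, 2\RES{4}, 4\RES{4}, 8\RES{4}\}$ together with the location of $-1$ renders this routine.
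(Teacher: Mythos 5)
Your proposal is correct and follows essentially the same route as the paper's proof: both use Gauss's criterion for $(2/p)_2$ to obtain the coset decomposition $\mathbb Z_p^* = \RES{4} \cup 2\RES{4} \cup 4\RES{4} \cup 8\RES{4}$, locate $-1$ in $4\RES{4}$, combine coset-invariance of the representation count with the symmetry $N(z)=N(-z)$ to get constancy on $\RES{2}$ and on its complement, and then use the same counting identity for $\lambda+\mu=q$ and the same observation $\RES{4}\cup(-\RES{4})=\RES{2}$ for the adjoined-zero case. The only cosmetic difference is that you count ordered pairs in $\RES{4}\times\RES{4}$ directly, whereas the paper counts pairs $(x,y)$ with $x^4-y^4\equiv_p a$.
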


\begin{proof}
Since $p=8q+5$, it follows from   Theorem~\ref{th_gauss1} that $(2/p)_2 \equiv_p ~-1 $, so Theorem~\ref{th_gauss2}  gives the disjoint union
$$\mathbb Z_p^*
=
\RES{4} \dot \cup 2 \RES{4} \dot \cup 4 \RES{4} \dot \cup 8 \RES{4},
$$
and it follows from the definition of the quadratic residue symbol  that
$$
 \left(2^j/p\right)_2 \equiv_p ~(-1)^j \text{ for every } j \in \mathbb Z.
$$
In particular, this computation and Euler's criterion show that
 \begin{equation}\label{obs_quads_part}
\RES{2} =  \RES{4} \dot \cup 4 \RES{4}.
\end{equation}

Next,   we show that $-1 \in 4 \RES{4}$ and $-2 \in 8 \RES{4}$.  Observe that
$$
 (-1 / p)_2 \equiv_p (-1)^{\frac{p-1}{2}} \equiv_p (-1)^{4q +2} \equiv_p 1, 
$$
so $-1 \in \RES{2}$ by Euler's criterion, but
$$
 (-1 / p)_4 \equiv_p (-1)^{\frac{p-1}{4}} \equiv_p (-1)^{2q +1} \equiv_p -1, 
$$
so $-1 \notin \RES{4}$ by Proposition~\ref{prop_2notres}.  Thus, Observation~(\ref{obs_quads_part}) implies that
$
-1 \in 4 \RES{4}.
$
Since $-1 \equiv_4 4 c^4$ for some $c \in \mathbb Z_p^*$, we also conclude that
$-2 \in 8 \RES{4}$, since
$-2 \equiv_p (-1)(2) \equiv_p (4 c^4)(2) \equiv_p 8 c^4$.

Next, we consider the differences between the quartic residues.
For each $a \in \mathbb Z_p^*$, let 
$$
D(a) = \left\{(x,y) \in \mathbb Z_p^* \times  \mathbb Z_p^* :
 x^4 -y^4 \equiv_p a \right\} .
$$
Fix $j \in \{0, 1,2,3\}$ and let $a, a' \in 2^j \RES{4}$, where $a=z^4 2^j$ and $a'=(z')^4 2^j$.  
If $(x,y), (x',y')\in D(a)$, then $$x^4 - y^4 \equiv_p z^4 2^j 
\text{ implies } ({z'} z^{-1} x)^4 -(z' z^{-1}y)^4 \equiv_p (z')^4 2^j,$$ so $({z'} z^{-1} x,{z'} z^{-1} y) \in D(a')$.  The group structure of $\mathbb Z_p^*$ implies 
$$(z' z^{-1} x, z' z^{-1} y)  = (z' z^{-1}x', z' z^{-1}y')   \text{ if and only if } (x,y) = (x',y'),$$
so it follows that $|D(a)| \leq |D(a')|$, and since this argument is symmetric with respect to $a$ and $a'$, we conclude that $|D(a)| = |D(a')|$ for all $a,a' \in 2^j \RES{4}$.

Observe that 
$(x,y) \in D(1)$ 
if and only if
$(y,x) \in D(-1)$, and since $-1 \in 4 \RES{4}$, it follows that
$$
|D(a)| = |D(1)| = |D(-1)| = |D(a')|
$$
for all $a \in \RES{4}$ and $a' \in 4 \RES{4}$.
  Similarly,
$(x,y) \in D(2)$ 
if and only if
$(y,x) \in D(-2)$, and since $-2 \in 8 \RES{4}$, it follows that
$$
|D(a)| = |D(2)| = |D(-2)|  =  |D(a')|
$$
for all $a \in 2 \RES{4}$ and $a' \in 8 \RES{4}$.

Since each coset $2^j \RES{4}$ is of size $\left|2^j \RES{4} \right| = \frac{p-1}{4}$, it is evident  that $\RES{4}$ forms 
a $(p, \frac{p-1}{4}, \lambda, \mu)$-Gaussian difference set for $\mathbb Z_p$, where
$\lambda = |D(1)|$ and $\mu = |D(2)|$. The fact that $\lambda +  \mu=q$ follows from elementary counting and the bidifference set condition, 
 $$
\left|
\{ (a,b) \in \RES{4} \times \RES{4} : a \neq b
\}
\right|
=
\lambda  \left|   \RES{2}  \right| + \mu \left|  \mathbb Z_p^* \backslash  \RES{2} \right|.
$$

Since the multiset of pairwise differences between the elements of $\RES{4}\cup \{0\}$ is that of $\RES{4}$ augmented with the possible differences with respect to $0$, the claim that $\RES{4}\cup \{0\}$ is a $(p,\frac{p+3}{4}, \lambda+1, \mu)$-Gaussian difference set for $\mathbb Z_p$ follows because
$$
\left\{ a-0 : a\in\RES{4}  \right\} \cup \left\{ 0-a : a\in\RES{4} \right\} 
=
\RES{4} \cup 4\RES{4} = \RES{2}.
$$
\end{proof}
\begin{rem}
Under the same conditions as this theorem, a straightforward adjustment of the proof shows that $2\RES{4}, 4\RES{4}$, and  $8\RES{4}$ and, similarly, $2\RES{4}\cup\{0\}, 4\RES{4}\cup\{0\}$, and  $8\RES{4} \cup \{0\}$ form Gaussian difference sets for $\mathbb G$ that generate harmonic frames with the same frame angle sets as the biangular frames generated by $\RES{4}$ and $\RES{4}\cup\{0\}$, respectively.
\end{rem}
As a corollary, we observe that there exist Gaussian difference sets which are neither divisible difference sets nor partial difference sets.

\begin{cor}
Let $\mathbb G = \mathbb Z_p$.  If $p$
is a prime of the form $p=8q+5$, where $q>0$ and $q$ is odd, then $\RES{4}$ is a proper $(p,\frac{p-1}{4},\frac{p+1}{2}, \lambda, \mu)$-bidifference set for $\mathbb G$ relative to $\RES{2} \cup \{0\}$.
 Moreover, $\RES{4}$
forms a Gaussian difference set for $\mathbb G$, but 
it does not form a partial difference set for $\mathbb G$ and 
it does not form a divisible difference set for $\mathbb G$.
\end{cor}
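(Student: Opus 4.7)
The plan is to prove each of the three claims in turn by leaning on the detailed coset-multiplicity analysis already carried out in the proof of Theorem~\ref{th_quartic_btfs}. That analysis decomposes $\mathbb Z_p^{*}$ as the disjoint union $\RES{4}\, \dot\cup\, 2\RES{4}\, \dot\cup\, 4\RES{4}\, \dot\cup\, 8\RES{4}$, shows $-1\in 4\RES{4}$ and $-2\in 8\RES{4}$, and uses the bijections $(x,y)\in D(1)\iff (y,x)\in D(-1)$ and $(x,y)\in D(2)\iff (y,x)\in D(-2)$ to conclude that every element of $\RES{4}\cup 4\RES{4}=\RES{2}$ arises as a difference of two quartic residues exactly $\lambda$ times, while every element of $2\RES{4}\cup 8\RES{4}=\mathbb Z_p^{*}\backslash\RES{2}$ arises exactly $\mu$ times, with $\lambda+\mu=q$.

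First I would dispose of the \emph{proper} assertion: since $q$ is odd, $\lambda=\mu$ would force $q=2\lambda$ to be even, contradicting the hypothesis; hence $\lambda\neq\mu$, so $\RES{4}$ is not an $(p,\tfrac{p-1}{4},\lambda)$-difference set, and by the definition of proper bidifference set, the Gaussian structure produced by Theorem~\ref{th_quartic_btfs} is proper.

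Next I would rule out the partial difference set possibility. A partial difference set structure (Definition~\ref{def_pds}) would require $\RES{4}$ to be a bidifference set relative to $\RES{4}\cup\{0\}$, which would in turn force every element of the complement $\mathbb G\backslash(\RES{4}\cup\{0\})=2\RES{4}\cup 4\RES{4}\cup 8\RES{4}$ to appear as a difference the same number of times. But the coset analysis above shows that elements of $4\RES{4}$ appear $\lambda$ times whereas elements of $2\RES{4}$ appear $\mu\neq\lambda$ times, an immediate contradiction.

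Finally I would dismiss the divisible difference set possibility. Since $\mathbb Z_p$ has prime order, its only subgroups are $\{0\}$ and $\mathbb Z_p$, and in either case one of the set differences $A_1\backslash A_0$ or $A_2\backslash A_1$ is empty, so the bidifference-set condition collapses to the difference set condition: every nonzero element must share a single common multiplicity. Since $\RES{4}$ exhibits two distinct multiplicities $\lambda\neq\mu$, it cannot be a difference set and hence cannot be a divisible difference set relative to any subgroup of $\mathbb Z_p$. I do not anticipate a hard step; the only mild subtlety is being careful about the two degenerate subgroup cases when ruling out the divisible difference set possibility, where one must note that the definition of a nested difference set permits $A_1$ to coincide with $A_0$ or $A_t$, making half of the multiplicity conditions vacuous.
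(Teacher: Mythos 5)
Your proposal is correct and follows essentially the same route as the paper: properness from the parity of $q$ via $\lambda+\mu=q$, exclusion of the divisible case because $\mathbb Z_p$ has no subgroup of order $\frac{p+1}{2}$ (with the degenerate subgroups handled by $\lambda\neq\mu$), and exclusion of the partial difference set case because the complement of $\RES{4}\cup\{0\}$ meets both multiplicity classes. The paper phrases the last step as the cardinality comparison $|\RES{4}|<|\RES{2}|$, but this is the same observation you make explicitly with the coset decomposition.
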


\begin{proof}
By the preceding theorem, the Gaussian difference set $\RES{4}$ is a proper $(p,\frac{p-1}{4},\frac{p+1}{2}, \lambda, \mu)$-bidifference set for $\mathbb G$ relative to $\RES{2} \cup \{0\}$, where $\lambda \neq \mu$ because $q$ is odd.
Since $\mathbb G$ is a group of prime order, it cannot have a subgroup of order $\frac{p+1}{2}$, so $\RES{4}$ cannot be a divisible difference set for $\mathbb G$. \jh{Furthermore, $\RES{4}$ cannot be a partial difference set for $\mathbb G$ because $\left| \RES{4}\right|=\frac{p-1}{4} <\frac{p-1}{2}= \left| \RES{2} \right|$.}
\end{proof}

Some of the bidifference sets obtained from quartic residues in Theorem~\ref{th_quartic_btfs} have already been considered \cite{MR2246267, 2014arXiv1409.0114N}.  In fact, some of them are difference sets \cite{MR2246267}.

\begin{thm}[\cite{MR2246267}]\label{th_quart_diff_sets}
Let $\mathbb G = \mathbb Z_p$, where $p$ is prime.  If  $p=4a^2 +1$ with $a$ an odd integer, then $\RES{4}$ forms a $(p,\frac{p-1}{4},\frac{p-5}{16})$-difference set for $\mathbb G$, and if $p=4a^2 +9$ with $a$ an odd integer, then  $\RES{4} \cup \{ 0 \}$ forms a $(p,\frac{p+3}{4},\frac{p+3}{16})$-difference set for $\mathbb G$. 
\end{thm}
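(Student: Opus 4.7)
The strategy is to leverage Theorem~\ref{th_quartic_btfs} to obtain Gaussian difference set structure and then verify that the resulting $\lambda, \mu$ parameters collapse so that we recover a genuine difference set. First, since $a$ is odd we have $a^2\equiv_8 1$, whence both $p=4a^2+1$ and $p=4a^2+9$ satisfy $p\equiv_8 5$, so $p=8q+5$ for a suitable $q>0$ in each case. By Theorem~\ref{th_quartic_btfs} there exist nonnegative integers $\lambda,\mu$ with $\lambda+\mu=q$ such that $\RES{4}$ is a $\bigl(p,\tfrac{p-1}{4},\lambda,\mu\bigr)$-Gaussian difference set for $\mathbb Z_p$, and $\RES{4}\cup\{0\}$ is a $\bigl(p,\tfrac{p+3}{4},\lambda+1,\mu\bigr)$-Gaussian difference set for $\mathbb Z_p$.

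Next, a direct parameter count reduces the problem to comparing $\lambda$ and $\mu$. To upgrade the first structure to a $\bigl(p,\tfrac{p-1}{4},\tfrac{p-5}{16}\bigr)$-difference set it suffices to prove $\lambda=\mu$, since then $\lambda+\mu=q$ forces $\lambda=\mu=q/2=\tfrac{p-5}{16}$. Similarly, upgrading $\RES{4}\cup\{0\}$ to a $\bigl(p,\tfrac{p+3}{4},\tfrac{p+3}{16}\bigr)$-difference set reduces to proving $\mu=\lambda+1$, since then $\lambda+\mu=q$ yields $\lambda+1=\mu=(q+1)/2=\tfrac{p+3}{16}$. In both cases the extra congruence imposed on $p$ by requiring $a$ to be odd guarantees that the target multiplicities are integers, so only the equalities $\lambda=\mu$ and $\mu=\lambda+1$ remain to be established.

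The main obstacle is proving these two equalities, and this is where classical number theory enters. I would identify $\lambda$ and $\mu$ with specific cyclotomic numbers of order four: writing $\lambda$ as $|\RES{4}\cap(1+\RES{4})|$ and, after multiplying representatives through by a fixed non-residue, writing $\mu$ as $|C_i\cap(1+C_i)|$ for the appropriate coset $C_i$ of $\RES{4}$ in $\mathbb Z_p^*$, so that both quantities become $(i,i)_4$ for distinct values of $i$. I would then invoke Gauss's explicit tabulation of the cyclotomic numbers $(i,j)_4$ for $p\equiv_8 5$: writing $p=A^2+4B^2$ uniquely with $A\equiv_4 1$, each $(i,j)_4$ is an integer linear combination of $p$, $A$ and $B$ divided by $16$. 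The representation $p=4a^2+1$ pins down $A=\pm 1$ and $B=\pm a$, while $p=4a^2+9$ pins down $A=\pm 3$ and $B=\pm a$; substituting these values into Gauss's formulas collapses the relevant cyclotomic numbers in exactly the way required to yield $\lambda=\mu$ in the first case and $\mu=\lambda+1$ in the second. The principal technical hurdle is keeping careful track of Gauss's sign convention for $A$ and matching the correct cyclotomic number to each of $\lambda$ and $\mu$; for a full treatment of this classical substitution, we refer the reader to \cite{MR2246267}.
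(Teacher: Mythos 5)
The paper offers no proof of this statement---it is imported verbatim from the cited reference \cite{MR2246267}---so there is no internal argument to compare against; what you have written is a reconstruction of the classical Chowla--Lehmer proof, and its skeleton is sound. Your reduction is correct and is a nice use of the paper's own machinery: since $a$ odd gives $a^2\equiv_8 1$, both $4a^2+1$ and $4a^2+9$ are $\equiv_8 5$, Theorem~\ref{th_quartic_btfs} applies, and the arithmetic checks out ($q=\frac{p-5}{8}$, so $\lambda=\mu$ forces $\lambda=\frac{p-5}{16}$ in the first case and $\mu=\lambda+1$ forces $\mu=\frac{p+3}{16}$ in the second, both integers precisely because $a$ is odd). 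This cuts the problem down to a single equality of two cyclotomic numbers of order four, namely $(0,0)_4$ versus $(3,3)_4$ in the standard indexing, rather than requiring equality of all four diagonal cyclotomic numbers as a from-scratch treatment would; the paper's Theorem~\ref{th_quartic_btfs} has already supplied $(0,0)_4=(2,2)_4$ and $(1,1)_4=(3,3)_4$. The one step you defer---substituting $p=A^2+4B^2$ with $A\equiv_4 1$ into Gauss's table for $p\equiv_8 5$, which gives $16(0,0)_4=p-7+2A$ and $16(3,3)_4=p-3-2A$, so that $(0,0)_4=(3,3)_4$ iff $A=1$ (i.e.\ $p=4a^2+1$) and $(0,0)_4+1=(3,3)_4$ iff $A=-3$ (i.e.\ $p=4a^2+9$)---is exactly the content of the classical theorem, so your proposal is a correct scaffold whose load-bearing computation still lives in the reference, which is an acceptable place for it given that the paper itself cites the result rather than proving it.
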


Another case where the bidifference sets obtained in Theorem~\ref{th_quartic_btfs} have been considered is when they form {\it almost difference sets} \cite{2014arXiv1409.0114N}.

\begin{defn}
Let $\mathcal S \subset \mathbb G$.   If $\mathcal S$ is an $(n,m,l, \lambda, \lambda+1)$-bidifference set relative to some subset $A \subset \mathbb G$, then it is called an {\bf $(n,m, \lambda, l-1)$-almost difference set for $\mathbb G$}. 
\end{defn}
\begin{rem}
Some authors define almost difference sets as above with the additional constraint that they form divisible difference sets \cite{MR1440858}.  
\end{rem}
Almost difference sets are interesting combinatorial objects  with useful applications;  see \cite{2014arXiv1409.0114N} for a survey.
\jh{As can be verified in the tables of the appendix, there are numerous examples of almost difference sets that manifest as divisible difference sets and partial difference sets} and therefore generate harmonic BTFs; however, as is demonstrated in Example~\ref{ex_bds_counter}, there are examples of almost difference sets which do not generate biangular harmonic frames, so we do not conduct an extensive study of them here.  Our main interest in this class of bidifference sets is the relationship they have with the bidifference sets obtained via quartic residues from Theorem~\ref{th_quartic_btfs}.  The following results are shown in \cite{2014arXiv1409.0114N} using the theory of cyclotomic numbers.

\begin{thm}\label{th_almost_quar}[Section 6 of \cite{2014arXiv1409.0114N}; see also \cite{MR2070151, MR1725160}]
Let $\mathbb G = \mathbb Z_p$, where $p$
is an odd prime,
then the following statements hold.
\begin{enumerate}
\item
If $p=9 + 4a^2$ or $p=25 + 4a^2$ for some integer $a$, then
$\RES{4}$ is a $(p, \frac{p-1}{4}, \frac{p-13}{16}, \frac{p-1}{2})$-almost difference set for $\mathbb G$.
\item
If $p=1 + 4a^2$ or $p=49 + 4a^2$ for some integer $a$, then
$\RES{4}\cup\{0\}$ is a $(p, \frac{p+3}{4}, \frac{p-5}{16}, \frac{p-1}{2})$-almost difference set for $\mathbb G$.
\end{enumerate}
\end{thm}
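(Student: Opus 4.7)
The plan is to apply the classical theory of quartic cyclotomic numbers. Fix a primitive root $g$ of $\mathbb Z_p$ and let $C_i = \{g^{4s+i} : 0 \le s < (p-1)/4\}$ for $i \in \{0,1,2,3\}$, so that $\RES{4} = C_0$ and $\RES{2} = C_0 \cup C_2$. The quartic cyclotomic numbers $(i,j)_4 := |\{z \in C_i : z + 1 \in C_j\}|$ admit explicit closed-form evaluations in terms of the unique decomposition $p = A^2 + 4B^2$ with $A \equiv 1 \pmod 4$, with the precise formula depending on whether $p \equiv 1$ or $5 \pmod 8$.

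First I would translate the difference structure of $\RES{4}$ into cyclotomic numbers: the change of variables $z = u - v \mapsto u/v$ shows that the number of representations of a nonzero $z \in C_k$ as $u - v$ with $u, v \in \RES{4}$ equals $(k, 0)_4$. Thus to establish (1), it suffices to show that the numbers $(0, 0)_4, (1, 0)_4, (2, 0)_4, (3, 0)_4$ take exactly two distinct values, namely $\frac{p-13}{16}$ on $\{0, 2\}$ (that is, on $\RES{2}$) and $\frac{p+3}{16}$ on $\{1, 3\}$. For part (2), I would additionally account for the effect of adjoining $0$: as established in the proof of Theorem~\ref{th_quartic_btfs}, the multiset $\{a - 0, 0 - a : a \in \RES{4}\}$ equals $\RES{2}$, so each element of $\RES{2}$ gains exactly one additional representation in the difference structure, while elements of $\mathbb Z_p^* \setminus \RES{2}$ gain none.

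The main obstacle is the detailed case analysis using the explicit formulas for the quartic cyclotomic numbers, which must be carried out separately for $p \equiv 1 \pmod 8$ and $p \equiv 5 \pmod 8$. Substituting the specific forms $p = 9 + 4a^2$ or $p = 25 + 4a^2$ in part (1) and $p = 1 + 4a^2$ or $p = 49 + 4a^2$ in part (2) fixes $|A| \in \{1, 3, 5, 7\}$, and the resulting numerical coincidences collapse the four cyclotomic numbers to the two claimed values in each case. Some additional care is required to handle the sign ambiguity in $B$ and the dependence on the choice of primitive root. Rather than reproduce this classical calculation in full, the result can be cited directly from \cite{2014arXiv1409.0114N, MR2070151, MR1725160}.
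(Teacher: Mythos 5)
The paper gives no proof of this theorem at all --- it is quoted from \cite{2014arXiv1409.0114N, MR2070151, MR1725160} with the remark that it is established there ``using the theory of cyclotomic numbers,'' which is precisely the route you sketch before likewise deferring to those same references, so your proposal matches the paper's treatment. (Your reduction of the difference structure of $\RES{4}$ to the quartic cyclotomic numbers $(k,0)_4$ is correct, and your claim that adjoining $0$ adds exactly one representation to each element of $\RES{2}$ is safe because integrality of the stated parameters $\frac{p-13}{16}$ and $\frac{p-5}{16}$ forces $p\equiv 5 \pmod 8$, so that $-1\notin\RES{4}$ as required.)
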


Whenever the  Gaussian difference sets obtained as quartic residues from Theorem~\ref{th_quartic_btfs} coincide with the difference sets from Theorem~\ref{th_quart_diff_sets} or with the almost difference sets from Theorem~\ref{th_almost_quar}, then we can state more precisely the angles sets of the corresponding harmonic frames.

\begin{cor}
Let $\mathbb G = \mathbb Z_p$, where $p$
is a prime of the form $p=8q+5$ with $q>0$, let $\mathcal S \subset \mathbb G$ be nonempty, and let $\mathcal F$ be the $\mathbb G$-harmonic frame for $\mathbb C^m$ generated by $\mathbb G$.
\begin{enumerate}
\item
If $\mathcal S=\RES{4}$ and $p=4a^2+1$, where $a$ is an odd integer, then $\mathcal F$ is an equiangular tight frame.
\item
If $\mathcal S=\RES{4}\cup\{0\}$ and $p=4a^2+9$, where $a$ is an odd integer, then $\mathcal F$ is an equiangular tight frame.
\item
If $\mathcal S=\RES{4}$ and either $p=9+4a^2$ or $p=25+4a^2$, where $a$ is an integer, then $\mathcal F$ is a biangular tight frame with frame angles
$$
\alpha_1, \alpha_2 = 
 \frac{1}{p-1}
\sqrt{
3p + 1 \pm8\sqrt{p}
      }
$$ and frame angle multiplicities $\tau_1 = \tau_2 = \frac{p-1}{2}$.
\item
If $\mathcal S=\RES{4} \cup \{0\}$ and either $p=1+4a^2$ or $p=49+4a^2$, where $a$ is an integer, then $\mathcal F$ is a biangular tight frame with frame angles
$$
\alpha_1, \alpha_2 = 
 \frac{1}{p+3}
\sqrt{
 3p + 9 \pm 8 \sqrt{p}
}
$$ and frame angle multiplicities $\tau_1 = \tau_2 = \frac{p-1}{2}$.
\end{enumerate}  
\end{cor}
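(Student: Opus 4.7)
The plan is to dispatch the four cases by combining two of the preceding structural results with the already-established theorems on difference sets and Gaussian difference sets. A preliminary mod-$8$ check confirms internal consistency: if $a$ is odd then $a^2 \equiv_8 1$, forcing each of $4a^2+1$, $4a^2+9$, $9+4a^2$, $25+4a^2$, $1+4a^2$, and $49+4a^2$ to be $\equiv_8 5$; conversely, in cases (3) and (4) the requirement $p=8q+5$ forces $a$ to be odd, so no conflict arises. Cases (1) and (2) then follow immediately: Theorem~\ref{th_quart_diff_sets} identifies $\mathcal{S}$ as a difference set for $\mathbb{G}$, and Theorem~\ref{thm_char_ETFs} produces the ETF conclusion.

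For cases (3) and (4), I would combine two independent descriptions of $\mathcal{S}$. Since $p=8q+5$, Theorem~\ref{th_quartic_btfs} shows that $\mathcal{S}$ is a $(p,m,\lambda,\mu)$-Gaussian difference set for $\mathbb{G}$, with the only constraint on $(\lambda,\mu)$ being that $\lambda+\mu=q$. Simultaneously, Theorem~\ref{th_almost_quar} identifies $\mathcal{S}$ as an almost difference set, which by definition forces the bidifference multiplicities to be consecutive integers. Because the multiset $\{g_a-g_b : a\neq b\}$ depends only on $\mathcal{S}$, the unordered pair $\{\lambda,\mu\}$ is uniquely determined whenever the two multiplicities are distinct, and combining the two identities $\lambda+\mu=q$ and $|\lambda-\mu|=1$ pins it down. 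In case (3) one obtains $\{\lambda,\mu\}=\{(p-13)/16,\,(p+3)/16\}$, and in case (4) one obtains $\{\lambda,\mu\}=\{(p-5)/16,\,(p+11)/16\}$.

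Finally, since $p\equiv_4 1$ and $\lambda\neq\mu$, the preceding theorem on Gaussian difference sets applies directly, showing that $\mathcal{F}$ is biangular with multiplicities $\tau_1=\tau_2=(p-1)/2$ and squared frame angles
$$m^2\alpha_{1,2}^2=(m-\lambda)+(\lambda-\mu)\cdot\frac{1\pm\sqrt{p}}{2}.$$
Substituting $m=(p-1)/4$ with $\lambda-\mu=\mp 1$ in case (3), and $m=(p+3)/4$ with $\lambda-\mu=\mp 1$ in case (4), then performing the routine algebraic simplification yields $\alpha_{1,2}=\tfrac{1}{p-1}\sqrt{3p+1\pm 8\sqrt{p}}$ and $\alpha_{1,2}=\tfrac{1}{p+3}\sqrt{3p+9\pm 8\sqrt{p}}$ respectively. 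The main (and only mildly subtle) step is the parameter-identification bridge between Theorem~\ref{th_quartic_btfs}, which records only $\lambda+\mu$, and Theorem~\ref{th_almost_quar}, which supplies the complementary identity $|\lambda-\mu|=1$; once these are combined, the remaining work is an elementary simplification whose ambiguity in the assignment $\lambda \leftrightarrow \mu$ is absorbed into the $\pm$ inside the square root.
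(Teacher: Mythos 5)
Your proposal is correct and follows exactly the route the paper intends (the paper leaves the proof implicit, but the surrounding discussion prescribes precisely this argument): cases (1)--(2) via Theorem~\ref{th_quart_diff_sets} and Theorem~\ref{thm_char_ETFs}, and cases (3)--(4) by using Theorem~\ref{th_almost_quar} to pin down $\lambda$ and $\mu=\lambda+1$ for the bidifference structure relative to $\RES{2}\cup\{0\}$ supplied by Theorem~\ref{th_quartic_btfs}, then substituting into the Gaussian difference set theorem. Your uniqueness observation for the pair $\{\lambda,\mu\}$ when the two multiplicities differ is the right justification for merging the two descriptions, and the algebra checks out.
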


%
%
%
%

\subsection{Nested Divisible Difference Sets}
We conclude our work with an examination of a third class of nested difference sets, the {\it nested divisible difference sets}, which can be viewed as a generalization of the divisible difference sets.  This class of combinatorial structures admits an example of a harmonic biangular frame which is not generated by a bidifference set.

\begin{defn}
Let $\mathcal S$ be a $(n,m,t)$-nested difference set for $\mathbb G$ relative to $(\mathcal A, \Lambda)$ .  We say that $\mathcal S$ is an {\bf $(n,m,t)$-nested divisible difference set for $\mathbb G$ relative to $(\mathcal A, \Lambda)$}
if every element  $A \in \mathcal A$ forms a subgroup of $\mathbb G$.
\end{defn}

	
	Next, we characterize the biangular harmonic frames  generated by nested divisible difference sets.

	\begin{theorem}\label{thm_btfs_ndds_char}
		Let $\mathcal S= \{g_1,...,g_m \} \subset \mathbb G$ be a proper
		$(n,m,t)$-nested divisible difference set for $\mathbb G$ relative to $(\mathcal A, \Lambda)$, where $t\geq 2$, $\mathcal A = \{A_j\}_{j=0}^t$ and  $\Lambda = \{\lambda_j\}_{j=1}^t$, and let $s\in\{1,...,t-1\}$ be the smallest integer such that $\lambda_s\not=\lambda_{s+1}$. 
Furthermore, let
$$
\alpha_1 = \sqrt{\frac{1}{m} - \frac{\lambda_1}{m^2}}
\text{ and }
\alpha_2 =\sqrt{\alpha_1^2 + \dfrac{1}{m^2}(\lambda_{s}-\lambda_{s+1})|A_{s}|}
.$$
 If $\mathcal F$ is the $\mathbb G$-harmonic frame generated by $\mathcal S$,
then 
\begin{enumerate}
\item
$\alpha_1$ and $\alpha_2$ occur among the frame angles of $\mathcal F$, and
\item
$\mathcal F$ is biangular if and only if, for any $r \in \{ s+1, ..., t-1\}$, either
		$$\displaystyle\sum\limits_{j=s}^r(\lambda_j-\lambda_{j+1})|A_j|=0\quad \text{ or }\quad
		\displaystyle\sum\limits_{j=s+1}^r(\lambda_j-\lambda_{j+1})|A_j|=0.$$ 
\end{enumerate}
	\end{theorem}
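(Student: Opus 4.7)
The plan is to compute $|\langle f_x, f_y\rangle|^2$ exactly via the modulation operator machinery and then determine which values are actually realized as $z := y-x$ ranges over $\mathbb G \setminus \{0_G\}$.

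First, Lemma~\ref{prop_mod_entries} together with the difference structure of a nested difference set gives $\|X_\xi\|^2_{H.S.} = n^2/m$ when $\xi = 0_G$ and $\|X_\xi\|^2_{H.S.} = \lambda_j n^2/m^2$ when $\xi \in A_j \setminus A_{j-1}$, since there are exactly $\lambda_j$ pairs $(a,b)$ with $g_b-g_a=\xi$. Substituting into Equation~(\ref{eq_encode_angles_simp}), for any nonzero $z$ one gets
$$m^2|\langle f_x, f_y\rangle|^2 = m + \sum_{j=1}^{t}\lambda_j \sum_{\xi\in A_j \setminus A_{j-1}}\rho_z(\xi).$$
Setting $S_j(z) := \sum_{\xi\in A_j}\rho_z(\xi)$, noting $S_0(z)=1$ and $S_t(z)=0$ (via Corollary~\ref{cor_gen_sum_roots_unity} since $A_t = \mathbb G$), and Abel-summing yields
$$m^2|\langle f_x, f_y\rangle|^2 = (m-\lambda_1) + \sum_{j=1}^{t-1}(\lambda_j - \lambda_{j+1})S_j(z).$$

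Because each $A_j$ is a subgroup, Proposition~\ref{prop_sumchar_on_H} forces $S_j(z)$ to be dichotomous: $S_j(z) = |A_j|$ when $\rho_z \in \AN(A_j)$ and $S_j(z) = 0$ otherwise. The properness hypothesis precludes any coincidence $A_r = A_{r+1}$ (otherwise the redundant level could be deleted, producing an $(n,m,t-1)$-nested difference set), so Proposition~\ref{prop_size_ann} produces a strictly descending chain
$$\widehat{\mathbb G} = \AN(A_0) \supsetneq \AN(A_1) \supsetneq \cdots \supsetneq \AN(A_t) = \{\rho_{0_G}\}.$$
Every $z \in \mathbb G \setminus \{0_G\}$ therefore belongs to $\AN(A_r)\setminus \AN(A_{r+1})$ for a unique $r=r(z) \in \{0,\ldots,t-1\}$, and each such $r$ is realized by some nonzero $z$. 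Writing $T_r := \sum_{j=1}^{r}(\lambda_j-\lambda_{j+1})|A_j|$ with $T_0 := 0$, the formula collapses to
$$|\langle f_x, f_y\rangle|^2 = \alpha_1^2 + \frac{T_{r(z)}}{m^2}.$$

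Part (1) is then immediate: $r=0$ gives the value $\alpha_1^2$, while the minimality of $s$ makes $\lambda_1 = \cdots = \lambda_s$, so $T_s = (\lambda_s - \lambda_{s+1})|A_s|$, which gives exactly $\alpha_2^2$; both values occur among the frame angles. For part (2), since $T_1 = \cdots = T_{s-1}=0$ and $T_s\neq 0$, the frame $\mathcal F$ is biangular if and only if $\{T_r : 0 \leq r \leq t-1\} = \{0, T_s\}$, which (using that $r\le s$ already contributes only $\{0,T_s\}$) is equivalent to demanding, for each $r \in \{s+1,\ldots,t-1\}$, either $T_r = 0$ or $T_r - T_s = 0$. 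These two equations unpack precisely to $\sum_{j=s}^{r}(\lambda_j-\lambda_{j+1})|A_j|=0$ and $\sum_{j=s+1}^{r}(\lambda_j-\lambda_{j+1})|A_j|=0$, respectively. The main obstacle I anticipate is the bookkeeping around properness: carefully verifying that no level can be dropped (so the annihilator chain is strictly decreasing and every candidate $r$ is realized), and managing indices cleanly through the telescoping; without strictness of the chain, some of the candidate values $T_r/m^2$ might fail to appear and could weaken the ``if'' direction of the characterization.
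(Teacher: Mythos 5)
Your proposal is correct and follows essentially the same route as the paper's own proof: computing the Hilbert--Schmidt norms of the modulation operators from the difference structure, telescoping Equation~(\ref{eq_encode_angles_simp}) into a sum of $(\lambda_j-\lambda_{j+1})\sum_{\xi\in A_j}\rho_z(\xi)$, evaluating each character sum dichotomously via Proposition~\ref{prop_sumchar_on_H}, and using properness to get a strictly decreasing annihilator chain so that every level is realized. Your $T_r$ bookkeeping is just a cleaner packaging of the paper's case analysis in Equation~(\ref{eq_nest_div_btf}).
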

	
	\begin{proof}
By Lemma~\ref{prop_mod_entries}, the squared Hilbert Schmidt norm of the $\xi$-th modulation operator is
$$
\| X_\xi \|_{H.S.}^2 = 
\left\{
\begin{array}{cc}
\frac{n^2}{m}, & \xi=0 \\
\lambda_j \frac{n^2}{m^2}, & \xi \in A_j \backslash A_{j-1}
\end{array}
\right.
.
$$

		Given $x, y \in \mathbb G$ with $z=y-x \neq 0$, then substituting these values into Equation~(\ref{eq_encode_angles_simp}) yields
		\begin{align*}  |\langle f_x, f_y \rangle |^2 &=\dfrac{1}{m}+
		\dfrac{1}{m^2}\sum_{j=1}^t 
		\lambda_j\sum\limits_{\xi \in A_j \backslash A_{j-1}}
		\rho_{z} (\xi)\\
		&=\dfrac{1}{m}-\dfrac{\lambda_1}{m^2}+\dfrac{\lambda_1}{m^2} 
		\sum\limits_{\xi \in A_1}
		\rho_{z} (\xi)
		+
		\dfrac{1}{m^2}\sum_{j=2}^t\lambda_j
		\sum\limits_{\xi \in A_j \backslash A_{j-1}}
		\rho_{z} (\xi)\\
		&=\dfrac{1}{m}-\dfrac{\lambda_1}{m^2}+\dfrac{1}{m^2}\sum_{j=1}^{t-1}(\lambda_j-\lambda_{j+1}) \sum\limits_{\xi \in A_j}
		\rho_{z} (\xi)
+ \sum\limits_{\xi \in A_t}
		\rho_{z} (\xi).
		\end{align*}
Since $A_t = \mathbb G$, the last term vanishes by Corollary~\ref{cor_gen_sum_roots_unity}, and if $s\in\{1,...,t-1\}$ is the smallest integer such that $\lambda_s\not=\lambda_{s+1}$, then this simplifies to
\begin{equation}\label{eq_ndd_1}
|\langle f_x, f_y \rangle |^2
=
\dfrac{1}{m}-\dfrac{\lambda_1}{m^2}+\dfrac{1}{m^2}\sum_{j=s}^{t-1}(\lambda_j-\lambda_{j+1}) \sum\limits_{\xi \in A_j}
		\rho_{z} (\xi)
.
\end{equation}
	%
Observe that the increasing subgroup structure of $\mathcal A$ implies that the structure of the corresponding annihilator subgroups is decreasing; that is, 
 $$\AN{(A_{j'})} \subset\AN{(A_{j})} \text{ for all }0 \leq j\leq j' \leq t.$$ 
Using this observation and Proposition~\ref{prop_sumchar_on_H}, we re-express Equation~(\ref{eq_ndd_1}) as
\begin{equation}\label{eq_nest_div_btf}
|\langle f_x, f_y \rangle |^2
=
\left\{
{\tiny
\begin{array}{cc}
\alpha_1^2, & 
\begin{array}{cc}
\rho_z \in \AN{(A_r)} \backslash \AN{(A_{r+1})} \\
 0 \leq r < s
\end{array}
 \\
\alpha_2^2,
&  
\rho_z \in \AN{(A_s)} \backslash \AN{(A_{s+1})} 
\\
\alpha_2^2+\dfrac{1}{m^2}\sum_{j=s+1}^r(\lambda_{j}-\lambda_{j+1})|A_{j}|,
&  
\begin{array}{cc}
\rho_z \in \AN{(A_r)} \backslash \AN{(A_{r+1})} \\
 s < r \leq t-1
\end{array}
\end{array}
}
\right.
,\end{equation}
where
$$
\alpha_1 = \sqrt{\frac{1}{m} - \frac{\lambda_1}{m^2}}
\text{ and }
\alpha_2 =\sqrt{\alpha_1^2 + \dfrac{1}{m^2}(\lambda_{s}-\lambda_{s+1})|A_{s}|}
.$$

Because $\mathcal S$ is proper, it follows that
 $A_{j+1} \backslash A_j$ is nonempty, or equivalently $|A_j| < |A_{j+1}|$, for every $j \in \{0,1,...,t-1\}$;  therefore, Proposition~\ref{prop_size_ann} implies that 
 $\AN{(A_j)} \backslash \AN{(A_{j+1})}$ is nonempty for every $j \in \{0,1,...,t-1\}$.  

In particular, $\AN{(A_0)} \backslash \AN{(A_{1})}$ and $\AN{(A_s)} \backslash \AN{(A_{s+1})}$ are both nonempty, which shows that $\alpha_1$ and $\alpha_2$ must occur among the frame angles of $\mathcal F$.  The claim follows by checking the conditions for which $|\langle f_x, f_y \rangle| \in \{\alpha_1, \alpha_2 \}$ in the case $s<r\leq t-1$ from Equation~(\ref{eq_nest_div_btf}).

	\end{proof}
	
By the preceding theorem, we see that the conditions under which a proper $(n,m,t)$-nested divisible difference set generates a biangular harmonic frame are quite restrictive if $t \geq 3$.  Nevertheless, the following example demonstrates the existence of a proper $(8,3,3)$-nested divisible difference set that generates a biangular harmonic frame.  

\begin{ex}\label{ex_ndds_btf}
The $(8,3,3)$-nested difference set from Example~\ref{ex_nest_div_833} is a proper  $(8,3,3)$-nested divisible difference set because $A_1$ and $A_2$ are both subgroups of $\mathbb G$.  Furthermore, this example satisfies the conditions of Theorem~\ref{thm_btfs_ndds_char} and therefore generates a biangular harmonic frame.
%
\end{ex}

A natural question to ask about the biangular frame constructed in Example~\ref{ex_ndds_btf} is whether it is possible to construct a harmonic frame for $\mathbb C^3$ consisting of $8$ vectors with the same angle set using a bidifference set instead. 
 Up to isomorphism, there are three $8$-element groups to consider, $\mathbb G = \mathbb Z_8$, $\mathbb G =\mathbb Z_2 \oplus \mathbb Z_4$ or
$\mathbb G =\mathbb Z_2 \oplus\mathbb Z_2 \oplus \mathbb Z_2$, and, for each of these, there are $56$ ways to select a $3$-element subset $\mathcal S \subset \mathbb G$.  We inspected all possibilities for these three groups and concluded the following.  If $\mathbb G =\mathbb Z_2 \oplus\mathbb Z_2 \oplus \mathbb Z_2$, then there are no $3$-element subsets $\mathcal S \subset \mathbb G$ such that $\mathcal S$ generates a $\mathbb G$-harmonic frame with the same frame angle set as the frame in  Example~\ref{ex_ndds_btf}.  If $\mathbb G =\mathbb Z_2 \oplus\mathbb Z_4$, then there are $32$ subsets $\mathcal S \subset \mathbb G$ such that $\mathcal S$ generates a $\mathbb G$-harmonic frame with the same frame angle set as the frame in  Example~\ref{ex_ndds_btf} and they are all  proper  $(8,3,3)$-nested divisible difference sets.  If $\mathbb G =\mathbb Z_8$, then there are $16$ subsets $\mathcal S \subset \mathbb G$ such that $\mathcal S$ generates a $\mathbb G$-harmonic frame with the same frame angle set as the frame in  Example~\ref{ex_ndds_btf} and they are all  proper  $(8,3,3)$-nested divisible difference sets.  Thus, by means of proof by exhaustion, we conclude the following.
\begin{thm}
There exists a group $\mathbb G$ of order $n$ and a biangular $\mathbb G$-harmonic frame $\mathcal F$ for $\mathbb C^m$ such that, for every group $\mathbb G'$ of order $n$, there is no bidifference set for $\mathbb G'$ that generates a $\mathbb G'$-harmonic frame for $\mathbb C^m$ with the same set of frame angles as $\mathcal F$.
\end{thm}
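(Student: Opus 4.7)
The plan is to produce an explicit witness. I would take $n=8$, $m=3$, and let $\mathcal F$ be the biangular $(\mathbb Z_2 \oplus \mathbb Z_4)$-harmonic frame for $\mathbb C^3$ generated by the proper $(8,3,3)$-nested divisible difference set $\mathcal S = \{(0,0),(1,0),(0,1)\}$ of Examples~\ref{ex_nest_div_833} and~\ref{ex_ndds_btf}. The first step is to compute the explicit frame angle set $\Theta_{\mathcal F} = \{\alpha_1,\alpha_2\}$ directly from Theorem~\ref{thm_btfs_ndds_char}, using the parameter values $\lambda_1=2$, $\lambda_2=0$, $\lambda_3=1$, $|A_1|=2$, and $|A_2|=4$, to obtain two specific numerical values that will serve as the target angle set.

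Since every group of order $8$ is isomorphic to exactly one of $\mathbb Z_8$, $\mathbb Z_2 \oplus \mathbb Z_4$, or $\mathbb Z_2 \oplus \mathbb Z_2 \oplus \mathbb Z_2$, it suffices to verify the claim separately over these three groups. The second step is then to enumerate, for each such $\mathbb G'$, all $\binom{8}{3}=56$ three-element subsets $\mathcal S' \subset \mathbb G'$. For each candidate, two finite mechanical checks are performed: (i) compute the frame angle set of the generated $\mathbb G'$-harmonic frame, via the character sums $\tfrac{1}{m}\bigl|\sum_{g\in\mathcal S'}\rho_z(g)\bigr|$ for $z\in\mathbb G'\setminus\{0\}$, and test equality with $\{\alpha_1,\alpha_2\}$; and (ii) decide whether $\mathcal S'$ is a bidifference set by tabulating its multiset of nonzero pairwise differences $\{g_a - g_b : a\neq b\}$ and checking whether this multiset attains at most two distinct multiplicity values.

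Two observations substantially trim the bookkeeping. Because the target angle set contains two distinct values, difference sets can be ruled out immediately by Theorem~\ref{thm_char_ETFs}, leaving only proper bidifference sets as potential witnesses against the claim. And once the inventory of subsets producing the target angle set is assembled, one need only verify that for each such subset the difference multiset achieves three (rather than at most two) distinct multiplicity values. The paper's census reports that $\mathbb Z_2^3$ contributes no subset with the target angle set, $\mathbb Z_2 \oplus \mathbb Z_4$ contributes $32$, and $\mathbb Z_8$ contributes $16$; in each of these $48$ cases the subset turns out to be a proper $(8,3,3)$-nested divisible difference set whose three nonzero difference multiplicities are genuinely distinct, hence not a bidifference set.

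The main obstacle is simply the volume of casework, roughly $3\times 56 = 168$ subsets to inspect, since no obvious isomorphism-theoretic shortcut reduces this to a small number of representatives: the bidifference-set property is sensitive to the precise labeling of group elements, and the target angle values impose a nontrivial numerical constraint that interacts with that labeling. Once the exhaustive case-analysis is completed for all three groups of order $8$, the conclusion of the theorem follows by taking $\mathcal F$ as the required witness.
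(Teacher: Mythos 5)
Your proposal is correct and follows essentially the same route as the paper: the paper's proof is exactly this exhaustive census over the three groups of order $8$ and their $56$ three-element subsets, using the frame from Example~\ref{ex_ndds_btf} as the witness and observing that every subset attaining the target angle set is a proper $(8,3,3)$-nested divisible difference set, hence not a bidifference set. Your added remark that one must check the difference multiset attains three genuinely distinct multiplicity values is precisely the content of ``properness'' here, so nothing is missing.
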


\appendix

\section{Tables}
In the following tables, we collect several results regarding the existence  of infinite families of proper divisible difference sets, relative difference sets, and partial difference sets for abelian groups and compute information about the frame angle sets of the corresponding harmonic frames.  The purpose of these tables is not  to provide an exhaustive list of all possible biangular harmonic frames generated by bidifference sets, but to demonstrate the applicability of the theory developed in Section~\ref{sec_btfs}.

In each table, there are four columns.  For a given row from Table~\ref{tbl_dds}, Table~\ref{tbl_rds}, or Table~\ref{tbl_pds}, the entry of the first column describes the sufficient conditions for the existence of a divisible difference set, relative difference set or partial difference set, respectively, with parameters given in the entry from the second column of the same row, and a reference for the result is listed in the fourth column.  In each entry from the third column of a given table, we compute the frame angles, $\alpha_1$ and $\alpha_2$, of the harmonic frame generated by the bidifference set of that row.

Recall that $\mathbb G$ is an abelian group of order $n$ and that a $\mathbb G$-harmonic frame $\mathcal F$ consisting of $n$ vectors for $\mathbb C^m$ is defined by how we select a subset of $m$ characters from $\widehat{\mathbb G}$, as described in Section~\ref{sec_mod}.
For the second column of Table~\ref{tbl_dds}, Table~\ref{tbl_rds}, or Table~\ref{tbl_pds}, we use the parametrization of divisible difference sets, relative difference sets, or partial difference sets as given in Definition~\ref{def_dds}, Definition~\ref{def_rds}, and Definition~\ref{def_pds}, respectively, and we compute the information about the corresponding frame angles with Theorem~\ref{thm_dds_btf}, Corollary~\ref{cor_rds_btf} and Theorem~\ref{cor_pds_btf}, respectively.

Because our primary interest is in the existence of the resulting biangular harmonic frames, we list the sufficiency conditions for the existence of the underlying bidifference sets without describing the details of their constructions. For typographical reasons, we symbolize frequently occurring and complicated expressions occurring among the conditions with lower-cased Roman numbers, as listed in Table~\ref{tbl_key}.

\begin{table}[htb]
\centering
\caption{Common expressions in harmonic  BTF tables}\label{tbl_key}
\begin{tabular}{|P{.95\linewidth}|}
\hline
		\begin{enumerate}[label=(\roman*)]
		\item\label{cond_prime} 
		``$p$ is a prime"
		
		\item\label{cond_mersenne} 
		``$p$ is a prime of the form $p=2^s -1$  for some $s\in\mathbb N$ (ie,  $p$ is a Mersenne prime)"
		
		\item\label{cond_primepower} 
		``$q=p^s$ for some prime $p$ and $s\in\mathbb N$ (ie, $q$ is a prime power)"
		
		\item\label{cond_hadamard}
		 ``$u \in \mathbb N$, there exists an abelian group $\mathbb G'$ of order $4u^2$, and either there exists a $\left(4u^2, 2u^2+u, u^2+u \right)$-difference for $\mathbb G'$ or  there exists a $\left(4u^2, 2u^2-u, u^2-u \right)$-difference for $\mathbb G'$"
				
\item\label{cond_diffset1}
		 ``$v,w\in\mathbb N$, there exists an abelian group $\mathbb G''$ of order $w$, and there exists a 
		$\scriptscriptstyle \left(
		w,v, \frac{v(v-1)}{w-1}
		\right)$-difference set for $\mathbb G''$"

\item \label{cond_subgroup_EAq}
		 ``$\mathbb G$ contains a subgroup isomorphic to $\mathbb Z_p^{as}$"

\item\label{cond_prime_decomp}
``$\scriptscriptstyle v \in \mathbb N$,
$a_1,...,a_v \in \mathbb N$, and
$p_1,...,p_v $ are distinct primes''
		\end{enumerate}
\\
\hline
	     \end{tabular}
\end{table}

\begin{remark}
With respect to condition~\ref{cond_diffset1}, numerous examples of difference sets are known.  We refer to \cite{MR1440858, MR2246267} for more details.
\end{remark}
\begin{remark}
A difference set with parameters described in~\ref{cond_hadamard} is called a {\it Hadamard difference set}.  Numerous constructions of these are known.  We refer to \cite{MR1440858, MR1400415, MR2246267} for more details.
\end{remark}

We remark once more that our notation for divisible difference sets deviates from the form typically seen in the literature \cite{MR1440858}; see also Remark~\ref{rem_DDS_RDS_notn}.


\begin{table}[htb]
\centering
\caption{BTFs from divisible difference sets}\label{tbl_dds}
\begin{tabular}{|P{.22\linewidth}|P{.40\linewidth}|P{.16\linewidth}| P{.10\linewidth}|}
\hline
 {\bf Suff. Conditions} &  $\pmb{ (n,m,l,\lambda,\mu)}$&   $\pmb{\alpha_{1},\alpha_{2}}$ 
& {\bf Ref.} \\

\hline
\begin{minipage}{0.22\columnwidth}%
\tiny
Suppose \ref{cond_mersenne}.  Let \\
$\scriptscriptstyle \delta=p^3+p^2-p-2$.
\end{minipage}
&
$\scriptscriptstyle
 \biggl( 
p^2(p+1), p(p+1), p^2, p, p+1
\biggr) 
$
&
$
\scriptscriptstyle
\tworows
{
     \alpha_{1}= 0
}
{
     \alpha_{2}= \frac{1}{p+1}
}
$
&
\begin{minipage}{0.1\columnwidth}%
\tiny
Prop~2.3 in \cite{MR1063380}.
\centering
\end{minipage}
\\

\hline
\begin{minipage}{0.22\columnwidth}%
\tiny
Suppose \ref{cond_mersenne}. Let \\
$\scriptscriptstyle \delta=p^3+p^2-p-2$.
\end{minipage}
&
$\scriptscriptstyle
 \biggl( 
p^2(p+1), p(2p-1), p^2, p(p-1),3(p-1)
\biggr) 
$
&
$
\scriptscriptstyle
\tworows
{
     \alpha_{1}= \frac{p-2}{2p-1}
}
{
     \alpha_{2}= \frac{1}{2p-1}
}
$
&
\begin{minipage}{0.1\columnwidth}%
\tiny
Prop~2.9 in \cite{MR1063380}.
\centering
\end{minipage}
\\

\hline
\begin{minipage}{0.22\columnwidth}%
\tiny
Suppose $\scriptscriptstyle a \in \mathbb N$, where \\$\scriptscriptstyle a>1$ and $\scriptscriptstyle a$ is odd.
\end{minipage}
&
$\scriptscriptstyle
 \biggl( 
4a, a+2, a, a-2, 2
\biggr) 
$
&
$
\scriptscriptstyle
\tworows
{
     \alpha_{1}= \frac{a-2}{a+2}
}
{
     \alpha_{2}= \frac{2}{a+2}
}
$
&
\begin{minipage}{0.1\columnwidth}%
\tiny
Prop~2.12 in \cite{MR1063380}
\centering
\end{minipage}
\\

\hline
\begin{minipage}{0.22\columnwidth}%
\tiny
Suppose \ref{cond_primepower} and \\$q \equiv_4 1$.
\end{minipage}
&
$\scriptscriptstyle
 \biggl( 
2q,q,2, q-1, \frac{q-1}{2}
\biggr) 
$
&
$
\scriptscriptstyle
\tworows
{
     \alpha_{1}= \frac{1}{\sqrt{q}}
}
{
     \alpha_{2}= \frac 1 q
}
$
&
\begin{minipage}{0.1\columnwidth}%
\tiny
\centering
Prop~2.13 in \cite{MR1063380}.
\end{minipage}
\\

\hline
\begin{minipage}{0.22\columnwidth}%
\tiny
Suppose $\scriptscriptstyle a \in \mathbb N$.  Let
\\
$\scriptscriptstyle \delta = 3^{2a} - 2 \cdot 3^a$.
\end{minipage}
&
$\scriptscriptstyle
 \biggl( 
4 \cdot 3^{2a}, 
2\left(3^{2a} - 3^a\right), 
3^{2a},
\delta,
\delta+1
\biggr) 
$
&
$
\scriptscriptstyle
\tworows
{
     \alpha_{1}= 0
}
{
     \alpha_{2}= \frac{1}{2(3^a-1)}
}
$
&
\begin{minipage}{0.1\columnwidth}%
\tiny
\centering
Res.~2.3.9 in
\cite{MR1440858}.
\\
Also
\cite{MR1189879}.
\end{minipage}
\\

\hline
\begin{minipage}{0.22\columnwidth}%
\tiny
     Suppose \ref{cond_hadamard} and \ref{cond_diffset1}. Let 
$\scriptscriptstyle \delta= 2wu^2 +wu - 2uv$ and
$\scriptscriptstyle \epsilon = 2wu+w-2v.$
\end{minipage}
&
$\scriptscriptstyle
 \biggl( 4wu^2,\delta,   w,  \delta - 4u^2v +4u^2\left(\frac{v(v-1)}{w-1}\right) , \delta - wu^2 \biggr) 
$
&
$
\scriptscriptstyle
\tworows
{
     \alpha_{1}= \frac{|w-2v|}{\epsilon}
}
{
     \alpha_{2}=\frac{1}{\epsilon}\sqrt{\frac{4v(w-v)}{w-1}}
}
$
&
\begin{minipage}{0.1\columnwidth}%
\tiny
\centering
Cor.~2.3.2 in  \cite{MR1440858}. 
\\
Also \cite{MR658966}.
\end{minipage}
\\

\hline
\begin{minipage}{0.22\columnwidth}%
\tiny
Given
$\scriptscriptstyle a,b \in \mathbb N$, $\scriptscriptstyle a \leq b$, suppose \ref{cond_primepower} and \ref{cond_subgroup_EAq}.  Let\\
$\scriptscriptstyle \beta=q^{2b-a -1 }$,
$\scriptscriptstyle \delta=\frac{q^{a-1} -1}{q-1}$ and
$\scriptscriptstyle \epsilon=\frac{q^{a} -1}{q-1}$.
\end{minipage}
&
$\scriptscriptstyle
 \biggl( 
\epsilon q^{2b-a},
\epsilon \beta,
q^a,
\delta \beta,
\epsilon \beta q^{-1}
\biggr) 
$
&
$
\scriptscriptstyle
\tworows
{
     \alpha_{1}=0
}
{
     \alpha_{2}=\frac{q^{a-b}}{\epsilon}
}
$
&
\begin{minipage}{0.1\columnwidth}%
\tiny
\centering
Thm.~2.3.6 in
\cite{MR1440858}.
\end{minipage}
\\

\hline

\end{tabular}

\end{table}


Relative difference sets are a class of divisible difference sets which have received special attention in the literature  \cite{MR1400404,
GodsilRoy2009, MR3557826}.  For this reason, we tabulate examples of biangular harmonic frames generated by relative difference sets separately from the general divisible difference sets.  As with the divisible difference sets, we point out that our notation for relative difference sets is not standard; see Remark~\ref{rem_DDS_RDS_notn}.


\begin{table}[htb]
\caption{BTFs from relative difference sets}\label{tbl_rds}
\begin{tabular}{|P{.22\linewidth}|P{.40\linewidth}|P{.16\linewidth}| P{.10\linewidth}|}
\hline
 {\bf Suff. Conditions} &  $\pmb{(n,m,l,\mu)}$&   $\pmb{\alpha_{1},\alpha_{2}}$ 
&  {\bf Ref.} \\

\hline
\begin{minipage}{0.22\columnwidth}%
\tiny
Suppose $\scriptscriptstyle a, b \in \mathbb N$, $\scriptscriptstyle a \leq b$ and \ref{cond_prime}.
\end{minipage}
&
$\scriptscriptstyle
 \biggl( 
p^{a+b}, p^b, p^a, p^{b-a}
\biggr) 
$
&
$
\scriptscriptstyle
\tworows
{
     \alpha_{1}=0
}
{
     \alpha_{2}= p^{-b/2}
}
$
&
\begin{minipage}{0.1\columnwidth}%
\tiny
Sect~3.1 in \cite{MR1400404}.
\centering
\end{minipage}
\\

\hline
\begin{minipage}{0.22\columnwidth}%
\tiny
Suppose \ref{cond_hadamard}.
\end{minipage}
&
$\scriptscriptstyle
 \biggl( 
8u^2,
4u^2,
2,
2u^2
\biggr) 
$
&
$
\scriptscriptstyle
\tworows
{
     \alpha_{1}=0
}
{
     \alpha_{2}=\frac{1}{2u}
}
$
&
\begin{minipage}{0.1\columnwidth}%
\tiny
Sect~3.1 in \cite{MR1400404}.
\centering
\end{minipage}
\\

\hline
\begin{minipage}{0.22\columnwidth}%
\tiny
Suppose \ref{cond_hadamard}.
\end{minipage}
&
$\scriptscriptstyle
 \biggl(
16u^2, 8u^2, 2, 4u^2 
\biggr) 
$
&
$
\scriptscriptstyle
\tworows
{
     \alpha_{1}=0
}
{
     \alpha_{2}=\frac{\sqrt{2}}{4u}
}
$
&
\begin{minipage}{0.1\columnwidth}%
\tiny
Sect~3.1 in \cite{MR1400404}.
\centering
\end{minipage}
\\

\hline
\begin{minipage}{0.22\columnwidth}%
\tiny
Given \ref{cond_primepower} and $\scriptscriptstyle a \in \mathbb N$, let $\scriptscriptstyle d\in \mathbb N$ such that $\scriptscriptstyle d | q-1$.
\end{minipage}
&
$\scriptscriptstyle
 \biggl( 
\frac{q^{a+1} -1}{d}, q^{a}, \frac{q-1}{d}, dq^{a-1} 
\biggr) 
$
&
$
\scriptscriptstyle
\tworows
{
     \alpha_{1}=q^{-(a+1)/2}
}
{
     \alpha_{2}=q^{-a/2}
}
$
&
\begin{minipage}{0.1\columnwidth}%
\tiny
Sect~3.3 in \cite{MR1400404}.
\centering
\end{minipage}
\\

\hline
\begin{minipage}{0.22\columnwidth}%
\tiny
Given \ref{cond_primepower} and $\scriptscriptstyle a \in \mathbb N$, where $\scriptscriptstyle q$ and $\scriptscriptstyle a$ are both even, let $\scriptscriptstyle \delta = \frac{q-1}{2}$.
\end{minipage}
&
$\scriptscriptstyle
 \biggl( 
\frac{(q^{a+1} -1)}{\delta}, q^{a}, 2, \delta q^{a-1}
\biggr) 
$
&
$
\scriptscriptstyle
\tworows
{
     \alpha_{1}=q^{-(a+1)/2}
}
{
     \alpha_{2}=q^{-a/2}
}
$
&
\begin{minipage}{0.1\columnwidth}%
\tiny
Sect~3.3 in \cite{MR1400404}.
\centering
\end{minipage}
\\

\hline

\end{tabular}

\end{table}

\begin{remark}
Adjoining the canonical orthonormal basis to a biangular harmonic frame generated by a relative difference set from the first row of Table~\ref{tbl_rds}, where we take $a=b$, yields a {\it maximal set of mutually unbiased bases} in $\mathbb C^{p^a}$.  See \cite{GodsilRoy2009} for details.
\end{remark}

\begin{remark}
Adjoining the canonical orthonormal basis to a biangular harmonic frame generated by a relative difference set from the fourth row of Table~\ref{tbl_rds}, where we take $d=1$, yields a so-called {\it orthoplectic Grassmannian frame}, which generates an optimal line packing of $q^{a+1}+q^a-1$ lines in $\mathbb C^{q^a}$.  See \cite{MR3557826} for details.
\end{remark}

In order to use Theorem~\ref{cor_pds_btf} to compute the frame angles  of the BTFs generated by the  partial difference sets in Table~\ref{tbl_pds}, we remark that all of the partial difference sets we have collected here are regular; see Section~\ref{sec_pds}.

\begin{rem}
By Corollary~\ref{cor_pds_btf_pairs}, it follows that for each biangular harmonic frame $\mathcal F$ for $\mathbb C^m$ produced by a regular partial difference set $\mathcal S$ from Table~\ref{tbl_pds}, 
there exists an equiangular or biangular harmonic frame $\mathcal F'$ for $\mathbb C^{m+1}$ generated by $\mathcal S \cup \{0_G\}$.  The frame angles of $\mathcal F'$ can be calculated with Theorem~\ref{cor_pds_btf}.
\end{rem}

\begin{rem}
If we take $v=1$ and $b=p_1^{a_1}+1$ in the fourth row of Table~\ref{tbl_pds}, then the corresponding partial difference set is a difference set, the claimed value for $\mu$ is vacuously true and $\alpha_2$ can be disregarded.  The equiangular harmonic frame produced in this special case  corresponds to a simplex.
\end{rem}


\begin{table}[t]
\caption{BTFs from regular partial difference sets}\label{tbl_pds}
\begin{tabular}{|P{.22\linewidth}|P{.40\linewidth}|P{.16\linewidth}| P{.10\linewidth}|}
\hline
 {\bf Suff. Conditions} &  $\pmb{(n,m,\lambda,\mu)}$ &   $\pmb{\alpha_{1},\alpha_{2}}$ &  {\bf Ref.} \\

\hline
\begin{minipage}{0.22\columnwidth}%
\tiny
Suppose \ref{cond_primepower} and $\scriptscriptstyle q \equiv_4 1$.
\end{minipage}
&
$\scriptscriptstyle
 \biggl( 
q, \frac{q-1}{2}, \frac{q-5}{4}, \frac{q-1}{4}
\biggr) 
$
&
$
\scriptscriptstyle
\tworows
{
     \alpha_{1}= \frac{1}{\sqrt{q} +1}
}
{
     \alpha_{2}= \frac{1}{\sqrt{q} -1}
}
$
&
\begin{minipage}{0.1\columnwidth}%
\tiny
Thm~2.1 in \cite{MR1277942}.
\centering
\end{minipage}
\\

\hline
\begin{minipage}{0.22\columnwidth}%
\tiny
Let $\scriptscriptstyle a \in \mathbb N$, where $\scriptscriptstyle a>1$.
\end{minipage}
&
$\scriptscriptstyle
 \biggl( 
a^2, 2(a-1), a-2, 2
\biggr) 
$
&
$
\scriptscriptstyle
\tworows
{
     \alpha_{1}= \frac{a-2}{2(a-1)}
}
{
     \alpha_{2}= \frac{1}{a-1}
}
$
&
\begin{minipage}{0.1\columnwidth}%
\tiny
Ex~2.3.1 in \cite{MR1277942}.
\centering
\end{minipage}
\\

\hline
\begin{minipage}{0.22\columnwidth}%
\tiny
Let $\scriptscriptstyle a \in \mathbb N$, where $\scriptscriptstyle a>1$.
\end{minipage}
&
$\scriptscriptstyle
 \biggl( 
a^2, 3(a-1), a, 6
\biggr) 
$
&
$
\scriptscriptstyle
\tworows
{
     \alpha_{1}= \frac{a-3}{3(a-1)}
}
{
     \alpha_{2}= \frac{1}{a-1}
}
$
&
\begin{minipage}{0.1\columnwidth}%
\tiny
Ex~2.3.1 in \cite{MR1277942}.
\centering
\end{minipage}
\\

\hline
\begin{minipage}{0.22\columnwidth}%
\tiny
Given \ref{cond_prime_decomp}, let\\
$\scriptscriptstyle
c = p_1^{a_1} ... p_v^{a_v}$ and $\scriptscriptstyle b \in \mathbb N$,
 where
$
\scriptscriptstyle b\, \leq \, \min\limits_{j} \left\{p_j^{a_j}+1 \right\}.
$
\end{minipage}
&
$\scriptscriptstyle
 \biggl( 
c^2, b(c-1), c+b^2-3b, b^2-b
\biggr) 
$
&
$
\scriptscriptstyle
\tworows
{
     \alpha_{1}= \frac{|c-b|}{b(c-1)}
}
{
     \alpha_{2}=  \frac{1}{c-1}
}
$
&
\begin{minipage}{0.1\columnwidth}%
\tiny
Cor~2.5 in \cite{MR1277942}.
\centering
\end{minipage}
\\

\hline
\begin{minipage}{0.22\columnwidth}%
\tiny
Suppose $\scriptscriptstyle a \in \mathbb N$ and \ref{cond_prime} with $\scriptscriptstyle p$ odd. Let\\
$
\scriptscriptstyle
\delta = 9 \cdot p^{4a}.$
\end{minipage}
&
$\scriptscriptstyle
 \biggl( 
\delta, \frac{\delta -1}{2},  \frac{\delta -5}{4},  \frac{\delta -1}{4}
\biggr) 
$
&
$
\scriptscriptstyle
\tworows
{
     \alpha_{1}= \frac{1}{\sqrt{\delta}+1}
}
{
     \alpha_{2}= \frac{1}{\sqrt{\delta}-1}
}
$
&
\begin{minipage}{0.1\columnwidth}%
\tiny
Thm~3.1 in \cite{MR2496253}.
\centering
\end{minipage}
\\

\hline
\begin{minipage}{0.22\columnwidth}%
\tiny
Suppose $\scriptscriptstyle a \in \mathbb N$ and \ref{cond_prime} with $\scriptscriptstyle p$ odd.  Let\\
$
\scriptscriptstyle
\delta = 3 p^{2a}
$ and let 
$\scriptscriptstyle
\epsilon= \frac{\delta - 3}{2}
.$
\end{minipage}
&
$\scriptscriptstyle
 \biggl( 
\delta^2, 
\epsilon (\delta +1), 
-\delta + \epsilon^2 + 3\epsilon,  
\epsilon^2 +\epsilon
\biggr) 
$
&
$
\scriptscriptstyle
\tworows
{
     \alpha_{1}= \frac{1}{\delta + 1}
}
{
     \alpha_{2}= \frac{\delta-\epsilon}{\epsilon(\delta +1)}
}
$
&
\begin{minipage}{0.1\columnwidth}%
\tiny
Thm~3.2 in \cite{MR2496253}.
\centering
\end{minipage}
\\

\hline
\begin{minipage}{0.22\columnwidth}%
\tiny
Suppose $\scriptscriptstyle a \in \mathbb N$.
Let \\$\scriptscriptstyle \beta = 2^{2a-1} - 2^{a-1}$,\\
$\scriptscriptstyle \delta =  2^{a-1}-1$
and
$\scriptscriptstyle \epsilon =  2^{a}-1$.
\end{minipage}
&
$\scriptscriptstyle
 \biggl( 
2^{3a}, 
\beta\epsilon,
2^{a-1} + \beta(\delta-1),\beta\delta
\biggr) 
$
&
$
\scriptscriptstyle
\tworows
{
     \alpha_{1}= \epsilon^{-2}
}
{
     \alpha_{2}= \epsilon^{-1}
}
$
&
\begin{minipage}{0.1\columnwidth}%
\tiny
Thm~3.2 in \cite{MR1797679}.
\centering
\end{minipage}
\\

\hline
\begin{minipage}{0.22\columnwidth}%
\tiny
Suppose $\scriptscriptstyle a\in \mathbb N$, where \\$\scriptscriptstyle a>1$ and $\scriptscriptstyle a$ is odd. Let\\
$\scriptscriptstyle \delta = 4^{a-1}-1$ and $\scriptscriptstyle \epsilon = 4^{a-1}$.
\end{minipage}
&
$\scriptscriptstyle
 \biggl( 
4^{2a},
(4^a + 1)\delta,
\epsilon^2 - 3\epsilon -2,
\delta\epsilon
\biggr) 
$
&
$
\scriptscriptstyle
\tworows
{
     \alpha_{1}=(4^a+1)^{-1}
}
{
     \alpha_{2}=\frac{3\epsilon +1}{\delta(4^a+1)}
}
$
&
\begin{minipage}{0.1\columnwidth}%
\tiny
Cor~2.2 in \cite{MR2064754}.
\centering
\end{minipage}
\\

\hline

\end{tabular}

\end{table}

%
\bibliography{harmbtfsbib}
\bibliographystyle{plain}

\end{document}